\title{A comparison of mixed precision iterative refinement approaches for least-squares problems}
\author{Erin Carson\thanks{Faculty of Mathematics and Physics, Charles University, carson@karlin.mff.cuni.cz.} \and 
Ieva Dau\v{z}ickait\.{e}\thanks{CERFACS, Toulouse, France, dauzickaite@cerfacs.fr.}\\
Both authors were supported by Charles University Research Centre program No. UNCE/24/SCI/005, the Exascale Computing Project (17-SC-20-SC), a collaborative effort of the U.S. Department of Energy Office of Science and the National Nuclear Security Administration, and by the European Union (ERC, inEXASCALE, 101075632). Views and opinions expressed are those of the authors only and do not necessarily reflect those of the European Union
or the European Research Council. Neither the European Union nor the granting authority can be held responsible
for them.}
\tikzset{
  treenode/.style = {shape=rectangle, rounded corners,
                     draw, align=center, 
                     minimum height=2ex, text depth=0.25ex,
                     top color=white, bottom color=blue!20},
  root/.style     = {treenode, font=\Large\rmfamily, bottom color=red!30},
  env/.style      = {treenode, font=\ttfamily\normalsize},
}
\newcommand{\Atld}{\widetilde{A}}
\newcommand{\Rhat}{\widehat{R}}
\newcommand{\fhat}{\widehat{f}}
\newcommand{\xhat}{\widehat{x}}
\newcommand{\rhat}{\widehat{r}}
\newcommand{\tg}{\tilde{\gamma}}
\newtheorem{observation}[theorem]{Observation}
\renewcommand{\algorithmicrequire}{\textbf{Input: }}
\renewcommand{\algorithmicensure}{\textbf{Output: }}
\begin{document}

\maketitle

\renewcommand{\thefootnote}{\fnsymbol{footnote}}

\begin{abstract}
Various approaches to iterative refinement (IR) for least-squares problems have been proposed in the literature and it may not be clear which approach is suitable for a given problem. We consider three approaches to IR for least-squares problems when two precisions are used and review their theoretical guarantees, known shortcomings and when the method can be expected to recognize that the correct solution has been found, and extend uniform precision analysis for an IR approach based on the semi-normal equations to the two-precision case. We focus on the situation where it is desired to refine the solution to the working precision level. It is shown that the IR methods exhibit different sensitivities to the conditioning of the problem and the size of the least-squares residual, which should be taken into account when choosing the IR approach. We also discuss a new approach that is based on solving multiple least-squares problems.
\end{abstract}

\begin{keywords}
mixed precision, iterative refinement, least-squares, semi-normal equations
\end{keywords}

\begin{AMS}
65F05, 65F20
\end{AMS}

\section{Introduction}
Iterative refinement is used to reduce the impact of finite precision errors in computations. When solving linear systems of equations $Ax=b$, the solution can be refined by repeatedly computing the residual in a possibly higher precision and solving the system with the residual on the right-hand side to obtain an update to the solution, that is, computing $x_0$ in some way and repeating the following steps:
\begin{enumerate}
    \item Compute $r_i = b - A x_i$.
    \item Solve $ A \delta x_i = r_i$. \label{ir_step_correction}
    \item Update $ x_{i+1} = x_i + \delta x_i$.
\end{enumerate}
Various approaches for solving the correction equation in step~\ref{ir_step_correction} with convergence guarantees depending on the condition number of the coefficient matrix $A$ can be used, e.g. \cite{amestoy2024five, carson2018accelerating}. 

More care may be needed in the case of least-squares problems
\begin{equation}\label{eq:LS} 
    \min_x \Vert b - Ax \Vert_2, 
\end{equation} 
where $A \in \mathbb{R}^{m \times n}$, $m \geq n$, is full rank, $x  \in \mathbb{R}^n$, and $b \in \mathbb{R}^m$. The least-squares residual $r = b - Ax$ may be non-zero and this invalidates some of the convergence guarantees that are available in the linear system case.

There is a renewed interest in iterative refinement for least-squares problems \cite{epperly2024fast}, and laterally in employing low precision computations for such problems \cite{chen2023half,li2024double}. In this note, we thus examine three approaches to least-squares iterative refinement (LSIR) in two precisions proposed in the literature, show how two of the approaches can be combined so that the solution updates come from solving a few least-squares problems independently, and discuss their (lack of) convergence guarantees. We also investigate the conditions under which the methods can be guaranteed to recognize that a solution meeting the required accuracy has been found, ensuring that the new computed update does not corrupt the solution.
Further, the forward error analysis for the semi-normal equations LSIR approach is extended from the uniform precision case to the case where two precisions are used (Section~\ref{sec:semi-normal}). In this case, we determine the limiting accuracy and the convergence conditions. 
In the original form, LSIR strategies employ a direct solve for computing the update and are based on a QR decomposition of $A$ or the $R$-factor of $A$. This may be appropriate when $A$ is dense, but iterative methods may be preferred for sparse $A$. We include some numerical experiments to illustrate the behavior of LSIR when iterative methods are used, but theoretical convergence guarantees are not provided. 
 
The discussed LSIR procedures were originally introduced assuming the problem data $A$ and $b$ are in single precision and all computations (except computing the residual) are performed in single precision as well \cite{bjorck1967iterative,golub1966note}. The higher precision for the residual computations is achieved by accumulating the inner products in double precision, that is, the scalar products needed to compute the matrix-vector product of a single precision $A$ and a single precision vector $x$ were computed and added in double precision, and then rounded to single precision, which was faster than doing all the computations in double precision on hardware at the time; see, e.g. \cite[Section 2.3(f)]{von1947numerical}. Note that the units roundoff for IEEE double and single precisions are of the order $10^{-16}$ and $10^{-8}$, respectively. We present general versions of these LSIR procedures in two precisions with units roundoff $u$ (working precision) and $u_r$ (residual computations) with $u_r \leq u^2$. 

We note that fixed precision iterative refinement, where uniform precision is used for all computations, can be used to refine a solution obtained by a possibly not backward-stable solver; the goal is to obtain a solution of the same quality as when a backward-stable solver is used. The mixed precision iterative refinement however goes further and aims to remove the effect of the condition number of the coefficient matrix on the solution. This of course comes at the cost of using a higher precision for the residual computations. The focus of our convergence analysis is the relative forward error and when it can be 
guaranteed to reach the working precision level $\mathcal{O}(u)$. One could argue that solving ill-conditioned problems to high accuracy is an unnecessary task; note, however, that if the working precision is set to a low precision then the notion of an ill-conditioned problem changes accordingly. For example, when $u$ is set to IEEE half precision then problems with condition number as low as $10^3$ can be considered to be ill-conditioned in the working precision even though it is a moderately conditioned problem in the usual double precision. 

This note is structured as follows. We discuss the least-squares system approach to LSIR in Section~\ref{sec:LS_approach}. The semi-normal equations and augmented system approaches follow in Sections~\ref{sec:semi-normal} and \ref{sec:augmented}, respectively. A short discussion on using iterative solvers for LSIR is presented in Section~\ref{sec:iterative}. We comment on how to combine the least-squares system and augmented system approaches in Section~\ref{sec:combining_ls_augmented}. Numerical experiments illustrate the theory in Section~\ref{sec:numerics} and we summarize and give insight into choosing a solver in practice in Section~\ref{sec:summary}.

\subsection{Notation}
In the following, hats are used to refer to the computed quantities; for example, $\rhat$ is the computed version of $r$. We use a standard model of floating point arithmetic and define 
\begin{equation*}
    \tg_n = \frac{cnu}{1-cnu} \quad \text{and} \quad \tg_n^{(r)} = \frac{cnu_r}{1-cnu_r},
\end{equation*}
where $c$ is a small constant independent of $n$; see, e.g., \cite{high:ASNA2}. When $c=1$ we use $\gamma_n$ or $\gamma_n^{(r)}$. We use $\lesssim$ when we omit higher order terms which are insignificant in the expression. $\kappa(A)$ denotes the 2-norm condition number of $A$, that is, $\kappa(A) = \Vert A \Vert_2 \Vert A^{\dagger} \Vert_2 $, where $A^{\dagger}$ is the Moore-Penrose pseudoinverse of $A$.

\section{Least-squares system approach}\label{sec:LS_approach}

Golub \cite{golub1965numerical} proposed a straightforward extension of the IR procedure to least-squares problems, which we specify in two precisions in Algorithm~\ref{alg:LS_approach}. We call it the LS approach.

\begin{algorithm}
\caption{Two-precision LSIR based on the least-squares system approach}\label{alg:LS_approach}
\algorithmicrequire  $A \in \mathbb{R}^{m \times n}$, $b \in \mathbb{R}^m$, initial solution $x_0 \in \mathbb{R}^n$, precisions $u_r$ and $u$, where $u_r \leq u^2$ \\
\algorithmicensure approximate solution $x$
\begin{algorithmic}[1]
\State $i=0$
\While{not converged}
\State Compute $r_i = b - A x_i$ \Comment{$u_r$}
\State Solve $\min_{\delta x_i} \Vert r_i - A \delta x_i \Vert_2$ via QR decomposition \Comment{$u$} \label{alg_step:LS_approach_correction}
\State Update $ x_{i+1} = x_i + \delta x_i$ \Comment{$u$} 
\State $i=i+1$
\EndWhile
\end{algorithmic}
\end{algorithm}
No convergence guarantees are provided by Golub, but it is noted that this LSIR procedure is likely to converge only if the initial approximation $x_0$ is of sufficient accuracy. Businger and Golub \cite{businger1965linear} implement this procedure and point out that ``there is no assurance, however, that all digits of the final solution will be correct.''
The theoretical analysis by Golub and Wilkinson \cite{golub1966note} reveals some shortcomings of the approach in Algorithm~\ref{alg:LS_approach}. It is shown that when Householder QR decomposition is used to solve the correction equation in step~\ref{alg_step:LS_approach_correction}, the correct solution may not be recognized (as described below) unless the system is nearly compatible, that is, $\Vert r \Vert_2$ is nearly zero. In the following, we restate and explain    their argument in the general case, i.e., assuming that the solve in step~\ref{alg_step:LS_approach_correction} is performed in some precision $u$. 
The error in computing the residual $r_i$ is ignored.

First, we note that the correction $\delta \xhat_i$ computed in step~\ref{alg_step:LS_approach_correction} is the exact solution of a perturbed problem (see, e.g., \cite[Theorem 20.3]{high:ASNA2})
\begin{gather*}
    \min_{\delta x_i} \Vert r_i + \Delta r_i - (A + \Delta A_i)\delta \xhat_i \Vert_2, \textrm{ where} \\
    \Vert \Delta A_i \Vert_F \leq \tg_{mn}  \Vert A \Vert_F \textrm{ and } \Vert \Delta r_i \Vert_2 \leq \tg_{mn}  \Vert r_i \Vert_2,
\end{gather*}
and $\Delta A_i$ is the perturbation matrix associated with $\delta \xhat_i$. Then we can bound the error between the true correction $\delta x_i$ and the computed $\delta \xhat_i$ using \cite[Theorem 20.1]{high:ASNA2}, where we set $\epsilon = \tg_{mn}$ and assume that $\tg_{mn} \kappa(A)<1$. Ignoring small constant factors we obtain
\begin{equation}\label{eq:LS_forward_error_bound}
    \Vert \delta \xhat_i - \delta x_i \Vert_2 \leq \frac{\tg_{mn} }{1- \tg_{mn} \kappa(A)} \left( \kappa(A)\Vert \delta x_i\Vert_2 + \kappa(A)^2 \frac{\Vert  r_{\delta x_i } \Vert_2}{\Vert A \Vert_2  } \right),
\end{equation}
where $r_{\delta x_i }= r_i - A\delta x_i$. 

To illustrate the possible failing of Algorithm~\ref{alg:LS_approach}, assume that $x_i =x^*$, where $x^*$ is the minimizer of \eqref{eq:LS}. In this case, we have $r_i = b-Ax^* = r^*$, and thus $\delta x_i = 0$ and $\delta r_i = r^*$.
Ideally in this case, we would have $\Vert \delta \xhat_i \Vert_2/\Vert x^*\Vert_2  = \mathcal{O}(u) $ for the \emph{computed} $\delta\xhat_i$, since we would like to recognize that we have at least found the solution to working accuracy and thus the refinement should be terminated at this point.
Unfortunately, from \eqref{eq:LS_forward_error_bound}, we can only say that 
\begin{equation*}
    \Vert \delta \xhat_i \Vert_2 \leq \frac{\tg_{mn} }{1- \tg_{mn} \kappa(A)} \kappa(A)^2 \frac{\Vert r^*\Vert_2}{\Vert A \Vert_2  }.
\end{equation*}
This means that in order to guarantee that $\Vert \delta \xhat_i\Vert_2/\Vert x^*\Vert_2 = O(u)$ is achievable, we must have that 
\begin{equation}\label{eq:gol_wilk_condition}
        \kappa(A)^2 \frac{\Vert r^* \Vert_2}{\Vert A \Vert_2 \Vert x^* \Vert_2 } < 1.
\end{equation}
When the least-squares problem is almost compatible, that is, $ \Vert r^* \Vert_2$ is small, the condition \eqref{eq:gol_wilk_condition} is satisfied for a wider range of coefficient matrices $A$. Note that the condition is independent of $u$. Golub and Wilkinson \cite{golub1966note} thus conclude that ``whatever precision of computation is used there will be right-hand sides for which iterative refinement will never give solutions which are correct to working accuracy.''

\section{Semi-normal equations approach}\label{sec:semi-normal}

Golub and Wilkinson \cite{golub1966note} briefly discuss an approach to iterative refinement that employs the semi-normal equations
\begin{equation*}
    R^T R \delta x_i = A^T r_i,
\end{equation*}
where $R$ is the $R$-factor of the QR decomposition of $A$. The approach is attributed to Kahan. We present it in Algorithm~\ref{alg:semi-normal_approach}.

\begin{algorithm}
\caption{Two-precision LSIR based on the semi-normal equations approach}\label{alg:semi-normal_approach}
\algorithmicrequire  $A \in \mathbb{R}^{m \times n}$, $b \in \mathbb{R}^m$, $R \in \mathbb{R}^{n \times n}$ factor of the Householder QR decomposition of $A$, initial solution $x_0 \in \mathbb{R}^n$, precisions $u_r$ and $u$, where $u_r \leq u^2$ \\
\algorithmicensure approximate solution $x$
\begin{algorithmic}[1]
\State $i=0$
\While{not converged}
\State Compute $r_i = b - A x_i$ \Comment{$u_r$} \label{alg:sn_step_r}
\State Solve $R^T R \delta x_i = A^T r_i$ \Comment{Computing $A^T r_i$ in $u_r$, solving in $u$ } \label{eq:correction_step_semi_normal}
\State Update $ x_{i+1} = x_i + \delta x_i$ \Comment{$u$} \label{alg:sn_step_x_upd}
\State $i=i+1$
\EndWhile
\end{algorithmic}
\end{algorithm}
In \cite{bjorck1987stability}, Bj\"{o}rck derives a bound on the forward error for what he calls the ``corrected'' semi-normal equations approach, in which one step of iterative refinement is performed and a uniform precision is used throughout the computation. His goal is to obtain a solution with forward error equivalent to the error given by a backward stable LS solver rather than obtain a forward error to the level $\mathcal{O}(u)$. Recently Epperly \cite{epperly2024fast} extended Bj\"{o}rck's analysis to the case when $R$ is computed via a QR factorization $\Omega A = QR$, where $\Omega$ is a random sketching matrix, and $x_0$ is obtained as $x_0 = R^{-1} Q^T \Omega b$; if a high quality $R$ is obtained, then the method is forward stable.
In the following, we discuss some qualities of the computed $R$-factor of $A$ that is used in the correction equation, consider when the correct solution is recognized by the method, and analyze the convergence of the forward error in the semi-normal equations approach to LSIR, adapting the analysis for the corrected semi-normal equations in uniform precision of \cite{bjorck1987stability} to the mixed precision case described in Algorithm~\ref{alg:semi-normal_approach}.

\subsection{Computed $R$-factor of $A$}
We first consider $\Rhat$, which is the computed $R$-factor of $A$. It is assumed that $\Rhat$ is computed via the backward stable Householder QR algorithm in precision with unit roundoff $u$ and hence we can write (see, e.g., \cite[Theorem 19.4]{high:ASNA2})
\begin{gather}
    A+E = \tilde{Q} \Rhat, \textrm{ where }  \label{eq:HHQR_pert}\\
    \Vert E \Vert_F \leq \tg_{mn} \Vert A \Vert_F, \label{eq:E_norm}
\end{gather}
and $\tilde{Q}$ is orthogonal. 
It follows that
\begin{gather}
\Vert \Rhat \Vert_2 = \Vert \tilde{Q} \Rhat \Vert_2 = \Vert A + E \Vert_2 \leq \left( 1 + n^{1/2}\tg_{mn} \right) \Vert A \Vert_2. \label{eq:Rhat_bound}
\end{gather}
We assume that $\kappa(A)$ and $u$ are such that 
\begin{equation*}
    c(m,n) u \kappa(A) < 1,
\end{equation*}
where $c(m,n)$ is a constant depending on $m$ and $n$. Then $\Rhat$ is invertible and we can obtain an upper bound for $\Vert \Rhat^{-1} \Vert_2$ following the approach detailed in \cite[eq. 6.1-6.3]{bjorck1967solving}, that is, we write
\begin{gather*}
   \Rhat^T \Rhat = \Rhat^T \tilde{Q}^T \tilde{Q} \Rhat = (A+E)^T (A+E) = R^T (I + F) R, \quad \text{where } \\
   F = (Q^T E R^{-1})^T + Q^T E R^{-1} + (E R^{-1} )^T E R^{-1}.
\end{gather*}
Using \eqref{eq:E_norm} and $\Vert R^{-1} \Vert_2 = \Vert A^{\dagger} \Vert_2$ it follows that
\begin{equation}\label{eq:bound_norm_F}
    \Vert F \Vert_2 \leq 2 n^{1/2}\tg_{mn} \kappa(A) + n \tg_{mn}^2 \kappa(A)^2 < 1.
\end{equation}
We thus can use a first order approximation $(I + F)^{-1} \approx I - F$ in the following:
\begin{equation*}
    \Vert \Rhat^{-1} \Vert_2^2 = \Vert (\Rhat^T \Rhat)^{-1} \Vert_2 \approx \Vert  R^{-1} (I - F) R^{-T} \Vert _2 \leq \left( 1 +  \Vert F \Vert_2 \right) \Vert R^{-1} \Vert_2^2.
\end{equation*}
Taking the square root, employing \eqref{eq:bound_norm_F}, and ignoring the second order terms gives
\begin{equation}\label{eq:Rhatinverse_bound}
     \Vert \Rhat^{-1} \Vert_2 \leq \left( 1 +  2 n^{1/2}\tg_{mn} \kappa(A) \right)\Vert R^{-1} \Vert_2.
\end{equation}
We combine this with \eqref{eq:Rhat_bound} to obtain
\begin{equation}\label{eq:cond_no_Rhat_bound_A}
    \kappa(\Rhat) = \kappa(A) \left( 1 + \mathcal{O}(u)\kappa(A) \right).
\end{equation}
Then we also have
\begin{equation}\label{eq:norm_ARhatinv}
    \Vert A \Rhat^{-1} \Vert_2 = \Vert\tilde{Q} - E \Rhat^{-1} \Vert_2 \leq 1 + n^{1/2}\tg_{mn} \Vert A \Vert_2 \Vert \Rhat^{-1} \Vert_2 
    \lesssim 1 + n^{1/2}\tg_{mn}  \kappa(A).
\end{equation}

\subsection{Correction to the true solution}
The update in step~\ref{eq:correction_step_semi_normal} of Algorithm~\ref{alg:semi-normal_approach} is computed by solving two triangular systems via forward and backward substitutions. Golub and Wilkinson explore when this approach recognizes the correct solution $x^*$ \emph{if} it is found. They consider the case when $r_i$ and $A^T r_i$ are computed in higher (double) precision $u_r$ and only the computed $A^T r_i$ is cast to the lower (single) precision $u$. Note that if $x_i = x^*$ and the error in computing $fl(A^T r_i)$ is ignored, then the right-hand side of the correction equation in step~\ref{eq:correction_step_semi_normal} of Algorithm~\ref{alg:semi-normal_approach} is zero and thus $\delta x_i = 0$. If $x_i = x^*$, $\Vert A \Vert_2 = 1$ and we account for the error in computing $fl(A^T r_i) = A^T  r^* + A^T h_i = A^T h_i$ in $u_r$, it is shown that the true solution $\delta \widetilde{x}_i$ to the correction equation
\begin{equation*}
    \Rhat^T \Rhat \delta \widetilde{x}_i = A^T h_i
\end{equation*}
satisfies
\begin{equation*}
    \Vert \delta \widetilde{x}_i \Vert_2 \leq u_r \kappa(A)^2 \Vert r^* \Vert_2.
\end{equation*}
We can write this in a form similar to \eqref{eq:gol_wilk_condition}. If we set $u_r = u^2$, then we can guarantee $\Vert \delta \widetilde{x}_i  \Vert_2 / \Vert x^* \Vert_2 = \mathcal{O}(u)$ when
\begin{equation}\label{eq:condition_recognise_semi_normal}
   \kappa(A)^2 \frac{ \Vert r^* \Vert_2}{\Vert x^* \Vert_2} < u^{-1}.
\end{equation}
A bound for the computed solution $ \delta \xhat_i$ (where the errors in the triangular solvers are also accounted for) is not provided in \cite{golub1966note}. The forward error $ \Vert  \delta \widetilde{x}_i -  \delta \xhat_i \Vert_2$ for two triangular solves with $\Rhat$ and $\Rhat^T$ in precision $u$ can be bounded using 
\begin{equation*}
    \delta \xhat_i = (\Rhat + \Delta R_2)^{-1} (\Rhat^T + \Delta R_1)^{-1} A^T h_i,
\end{equation*}
where $\Vert \Delta R_i \Vert_2 \leq n^{1/2} \gamma_n \Vert \Rhat \Vert_2$. 
Then employing a first order approximation to the inverses, $\Vert h_i \Vert_2 \leq u_r \Vert r^* \Vert_2$, \eqref{eq:Rhatinverse_bound}, \eqref{eq:cond_no_Rhat_bound_A}, \eqref{eq:norm_ARhatinv} and ignoring higher order terms gives
\begin{equation*}
    \Vert  \delta \widetilde{x}_i -  \delta \xhat_i \Vert_2 \lesssim  n^{1/2} \gamma_n u_r \kappa(A)^2 \Vert r^* \Vert_2.
\end{equation*}
We can guarantee $\Vert  \delta \widetilde{x}_i -  \delta \xhat_i \Vert_{\infty} / \Vert x^* \Vert_2  = \mathcal{O}(u)$ if 
\begin{equation*}
     \kappa(A)^2 \frac{ \Vert r^* \Vert_2}{\Vert x^* \Vert_2} < u_r^{-1},
\end{equation*} 
which clearly holds whenever \eqref{eq:condition_recognise_semi_normal} holds. Note that unlike \eqref{eq:gol_wilk_condition} the condition \eqref{eq:condition_recognise_semi_normal} depends on the precision used and thus problems caused by large residual or ill-conditioning can be addressed by increasing the precision.

\subsection{The forward error}
We bound the relative forward error in the following theorem, followed by comments on the results. The proof of the theorem is provided after the comments. 
\begin{theorem}\label{th:semi_normal}
    Let $\xhat_{i}$ be the solution computed in iteration $i$ of Algorithm~\ref{alg:semi-normal_approach} and $x^*$ be the true solution of \eqref{eq:LS}. Then the relative forward error after $i$ iterations is bounded as
    \begin{align}
     \frac{\Vert x^* - \xhat_{i} \Vert_2}{\Vert x^* \Vert_2} \leq & \, c_1(m,n) u \kappa(A)^2 \frac{\Vert  x^* - \xhat_{i-1} \Vert_2}{\Vert x^* \Vert_2}  \nonumber \\
     & \, + u \frac{\Vert \xhat_{i} \Vert_2}{\Vert x^* \Vert_2}  + c_2(m,n) u_r \kappa(A)^2 \frac{\Vert A \Vert_2 \Vert \xhat_{i-1} \Vert_2 + \Vert b \Vert_2}{\Vert A \Vert_2 \Vert x^* \Vert_2}, \label{eq:semi_normal_fe_bound}
\end{align}
where $c_1(m,n)$ and $c_2(m,n)$ are constants depending on $m$ and $n$.
\end{theorem}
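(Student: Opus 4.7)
The plan is to trace each finite-precision perturbation through one iteration of Algorithm~\ref{alg:semi-normal_approach} and to express $\xhat_i - x^*$ in terms of $\xhat_{i-1} - x^*$ plus inhomogeneous error contributions. The governing identity is that at the true solution $A^T r^* = 0$, so $A^T(b - A \xhat_{i-1}) = A^T A(x^* - \xhat_{i-1})$; consequently, an exact semi-normal step from $\xhat_{i-1}$ with the true $R$-factor in infinite precision would produce the correction $x^* - \xhat_{i-1}$ and terminate in one iteration. The theorem thus amounts to quantifying how far the computed $\delta\xhat_{i-1}$ deviates from this ideal update, together with the round-off produced when it is added to $\xhat_{i-1}$.

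I would first record the three perturbation models that arise in one iteration. Step~\ref{alg:sn_step_r}, carried out in $u_r$, gives $\hat r_{i-1} = b - A\xhat_{i-1} + \Delta r$ with $\|\Delta r\|_2 \lesssim \gamma^{(r)}_{m+1}\bigl(\|b\|_2 + \|A\|_2\|\xhat_{i-1}\|_2\bigr)$. The matrix--vector product $\hat g_{i-1} = \mathrm{fl}_{u_r}(A^T \hat r_{i-1})$ adds a second $u_r$-sized perturbation proportional to $\|A\|_2\|\hat r_{i-1}\|_2$. The two back/forward substitutions with $\hat R^T$ and $\hat R$ in $u$ imply that $\delta\xhat_{i-1}$ solves $(\hat R + \Delta R_2)(\hat R^T + \Delta R_1)\delta\xhat_{i-1} = \hat g_{i-1}$ with $\|\Delta R_j\|_2 \le n^{1/2}\gamma_n\|\hat R\|_2$. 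Step~\ref{alg:sn_step_x_upd} finally adds a $u$-sized round-off of order $u\|\xhat_i\|_2$ to the sum $\xhat_{i-1} + \delta\xhat_{i-1}$.

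Next I would invert the perturbed semi-normal operator to first order, using $\hat R^T \hat R = R^T(I+F)R$ and the bounds \eqref{eq:bound_norm_F}, \eqref{eq:Rhatinverse_bound}, and \eqref{eq:norm_ARhatinv}. The key algebraic move is the cancellation $R^{-T}A^T = Q^T$ coming from the exact decomposition $A = QR$: it reduces $(\hat R^T\hat R)^{-1}A^T r$ to $R^{-1}(I-F)Q^T r$ modulo higher-order terms, keeping the worst amplification at $\kappa(A)^2$ rather than $\kappa(A)^3$. Splitting $r_{i-1}^{\textrm{exact}} = r^* - A(\xhat_{i-1}-x^*)$ and using $Q^T A = R$, the portion of $r$ arising from $\xhat_{i-1}-x^*$ yields $R^{-1} F R (x^*-\xhat_{i-1})$, contributing $O(u)\kappa(A)^2\|x^*-\xhat_{i-1}\|_2$, i.e.\ the first term in \eqref{eq:semi_normal_fe_bound}. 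Treating $\Delta R_1,\Delta R_2$ analogously and using \eqref{eq:Rhatinverse_bound}--\eqref{eq:norm_ARhatinv} gives further $O(u)\kappa(A)^2\|x^*-\xhat_{i-1}\|_2$ contributions, all absorbed into $c_1(m,n)$. The $u_r$ perturbations $\Delta r$ and $\Delta g$ pass through the combined factor $\|(\hat R^T\hat R)^{-1}\|_2\|A\|_2 \lesssim \kappa(A)^2/\|A\|_2$, and using $\|\hat r_{i-1}\|_2 \le \|b\|_2 + \|A\|_2\|\xhat_{i-1}\|_2$ produces the $c_2(m,n)u_r\kappa(A)^2$ term. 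The update round-off finally yields the remaining $u\|\xhat_i\|_2/\|x^*\|_2$ term.

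The main obstacle is disciplined bookkeeping: several perturbations of different orders ($u$ versus $u_r$) with different scalings (by $\|A\|_2$, $\|\hat R\|_2$, $\|\hat R^{-1}\|_2$) must be combined consistently, and higher-order contributions dropped in a controlled way. The one genuinely delicate point is the cancellation $R^{-T}A^T = Q^T$ that keeps the contraction factor at $\kappa(A)^2$ rather than $\kappa(A)^3$; beyond this, the derivation parallels the uniform-precision semi-normal analysis of \cite{bjorck1987stability}, with the residual-level quantities amplified by $u_r$ in place of $u$.
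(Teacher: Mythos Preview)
Your proposal is correct and takes essentially the same approach as the paper. The only cosmetic difference is in how the contraction operator is handled: you invert $\Rhat^T\Rhat = R^T(I+F)R$ to first order and exploit the exact relation $R^{-T}A^T = Q^T$, whereas the paper writes $M = \Rhat^{-1}(I-\bar{Q}^T\bar{Q})\Rhat$ with $\bar{Q} = A\Rhat^{-1}$ and bounds $\|I-\bar{Q}^T\bar{Q}\|_2$ directly---both routes yield the same $O(u)\kappa(A)^2$ contraction factor, and the treatment of the $u_r$ perturbations and the update round-off is identical.
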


\begin{observation}
    We can thus conclude that the solution is improved with every LSIR iteration if $ c_1(m,n) u \kappa(A)^2<1$, which is essentially
\begin{equation}\label{eq:kappaA_condition_seminormal}
    \kappa(A) < u^{-1/2}.
\end{equation}
When we approach convergence we have $\xhat_i = \xhat_{i+1} + \mathcal{O}(u) =  x^* + \mathcal{O}(u)$, that is,  $\Vert \xhat_i \Vert_2 \approx \Vert \xhat_{i+1} \Vert_2 \approx \Vert x^* \Vert_2$, and by plugging this and $\Vert b \Vert_2 \leq \Vert r^* \Vert_2 + \Vert A \Vert_2 \Vert x^* \Vert_2$ into \eqref{eq:semi_normal_fe_bound} we can conclude that the limiting normwise relative error is of the order
\begin{equation*}
u + c_2(m,n) u_r \kappa(A)^2 \left(2 +\frac{ \Vert r^* \Vert_2}{\Vert A \Vert_2 \Vert x^* \Vert_2 }\right).
\end{equation*}
We thus achieve $\mathcal{O}(u)$ relative error if 
\begin{equation}
    u_r \kappa(A)^2 \left(2 +\frac{ \Vert r^* \Vert_2}{\Vert A \Vert_2 \Vert x^* \Vert_2 }\right) < u
    \label{eq:cond_sneir}
\end{equation}
and if $u_r = u^2$ we require
\begin{equation*}
     \kappa(A)^2 \left(2 +\frac{ \Vert r^* \Vert_2}{\Vert A \Vert_2 \Vert x^* \Vert_2 }\right) < u^{-1},
\end{equation*}
which is satisfied when \eqref{eq:kappaA_condition_seminormal} holds and 
\begin{equation*}
    \frac{ \Vert r^* \Vert_2}{\Vert A \Vert_2 \Vert x^* \Vert_2} = \mathcal{O}(1).
\end{equation*}
Notice that as long as the above holds, $\Vert r^* \Vert_2$ does not affect the limiting error.
\end{observation}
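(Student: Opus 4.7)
The plan is to treat the bound \eqref{eq:semi_normal_fe_bound} as a linear recursion of the form $e_i \le \alpha\, e_{i-1} + \beta_i$, where $e_i := \|x^* - \xhat_i\|_2 / \|x^*\|_2$ is the relative forward error, $\alpha := c_1(m,n) u \kappa(A)^2$ is the effective contraction factor, and $\beta_i$ collects the remaining two terms on the right-hand side of \eqref{eq:semi_normal_fe_bound}. Standard reasoning about such recursions says that the first term strictly damps the previous error precisely when $\alpha < 1$; dropping the constant $c_1(m,n)$ and rearranging yields the essential condition \eqref{eq:kappaA_condition_seminormal}, namely $\kappa(A) < u^{-1/2}$.

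Next I would identify the limiting accuracy by studying the regime in which the recursion has settled, so that the first term on the right-hand side of \eqref{eq:semi_normal_fe_bound} is no longer the dominant contribution. In that regime the successive iterates differ only at the level of the rounding errors introduced in the update step, so $\xhat_i = \xhat_{i-1} + \mathcal{O}(u) = x^* + \mathcal{O}(u)$; in particular, both $\Vert \xhat_i \Vert_2 / \Vert x^* \Vert_2$ and $\Vert \xhat_{i-1} \Vert_2 / \Vert x^* \Vert_2$ can be replaced by $1$ up to $\mathcal{O}(u)$ terms, and any contribution from $\alpha\, e_{i-1}$ is then absorbed into the $u$ coming from the second term in \eqref{eq:semi_normal_fe_bound}. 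To eliminate $\Vert b \Vert_2$, I would apply the triangle inequality to $b = r^* + A x^*$ to obtain $\Vert b \Vert_2 \le \Vert r^* \Vert_2 + \Vert A \Vert_2 \Vert x^* \Vert_2$ and substitute this into the third term of \eqref{eq:semi_normal_fe_bound}. Adding the two $\Vert A\Vert_2\Vert x^*\Vert_2 / (\Vert A\Vert_2\Vert x^*\Vert_2)$ contributions produces the factor of $2$, leaving the claimed limiting expression of order $u + c_2(m,n) u_r \kappa(A)^2 \bigl(2 + \Vert r^* \Vert_2 / (\Vert A \Vert_2 \Vert x^* \Vert_2)\bigr)$.

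Finally, to justify the sufficient condition for reaching $\mathcal{O}(u)$ relative error, I would require the second summand in the limiting expression to be at most $\mathcal{O}(u)$; dividing through by $u$ immediately gives \eqref{eq:cond_sneir}. Specializing to $u_r = u^2$ turns the $u_r/u$ factor into $u$ and yields the displayed condition $\kappa(A)^2 \bigl(2 + \Vert r^* \Vert_2 / (\Vert A \Vert_2 \Vert x^* \Vert_2)\bigr) < u^{-1}$, which cleanly splits into the two requirements $\kappa(A)^2 < u^{-1}$ — a restatement of \eqref{eq:kappaA_condition_seminormal} — and $\kappa(A)^2 \Vert r^* \Vert_2 / (\Vert A \Vert_2 \Vert x^* \Vert_2) < u^{-1}$, the latter reducing under \eqref{eq:kappaA_condition_seminormal} to the insensitivity condition $\Vert r^* \Vert_2 / (\Vert A \Vert_2 \Vert x^* \Vert_2) = \mathcal{O}(1)$.

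The main delicacy, and really the only non-routine step, is making the ``near convergence'' substitution $\xhat_i, \xhat_{i-1} \approx x^*$ non-circular: one must first argue from $\alpha < 1$ that iterating $e_i \le \alpha e_{i-1} + \beta$ drives $e_i$ down to $\mathcal{O}(\beta/(1-\alpha))$, and only then insert the resulting $\xhat_i = x^* + \mathcal{O}(u)$ back into the right-hand side of \eqref{eq:semi_normal_fe_bound} to extract the stated limiting accuracy. Once this fixed-point argument is in place, the remaining manipulations are algebraic rearrangements of the bound and do not require grinding through any constants.
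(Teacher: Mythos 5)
Your proposal is correct and follows essentially the same route as the paper's own reasoning: read \eqref{eq:semi_normal_fe_bound} as a damped recursion to get \eqref{eq:kappaA_condition_seminormal}, substitute $\Vert \xhat_i \Vert_2 \approx \Vert \xhat_{i-1}\Vert_2 \approx \Vert x^* \Vert_2$ and $\Vert b \Vert_2 \le \Vert r^* \Vert_2 + \Vert A \Vert_2 \Vert x^* \Vert_2$ to extract the limiting error, and then impose that the $u_r$ term be $\mathcal{O}(u)$. Your added remark about making the near-convergence substitution non-circular via the fixed-point bound $e_i \lesssim \beta/(1-\alpha)$ is a welcome refinement of a step the paper performs only heuristically.
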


We continue with the proof of Theorem~\ref{th:semi_normal}.

\begin{proof}
We bound the error $\Vert x^* - \xhat_{i+1} \Vert_2$ by accounting for the finite precision error in steps~\ref{alg:sn_step_r} to \ref{alg:sn_step_x_upd} of Algorithm~\ref{alg:semi-normal_approach} as follows: 
\begin{gather}
    \rhat_i = b - A \xhat_i + f, \label{eq:sn_rhat}\\
    \Rhat^T \Rhat \delta \xhat_i = A^T \rhat_i + k, \label{eq:sn_triang_solves} \\
    \xhat_{i+1} = \xhat_i + \delta \xhat_i  + e, \label{eq:sn_update_xhat}
\end{gather}
where the vector $f$ accounts for the finite precision error in computing $fl(r_i)$, $k$ accounts for the error in computing $fl(A^T \rhat_i)$ and the triangular solves with $\Rhat$, and $e$ accounts for the error in adding $\xhat_i $ and $ \delta \xhat_i $. Using \eqref{eq:sn_rhat}-\eqref{eq:sn_update_xhat}, we write 
\begin{align}
    x^* - \xhat_{i+1} = & \, x^* - \xhat_i - \delta \xhat_i  - e \nonumber \\
    = & \, x^* - \xhat_i - (\Rhat^T \Rhat )^{-1} \left( A^T \rhat_i + k \right) - e \nonumber\\
    = & \, x^* - \xhat_i - (\Rhat^T \Rhat )^{-1} \left( A^T ( b - A \xhat_i + f) + k \right) - e \nonumber \\
    = & \, M ( x^* - \xhat_i ) - (\Rhat^T \Rhat )^{-1} \left( A^T f + k \right) - e, \label{eq:error_xtrue_x_i+1}
\end{align}
where $M = I - (\Rhat^T \Rhat )^{-1} A^T A$ and we used $A^T b = A^T A x$ in the last equation. We first aim to bound $\Vert M \Vert_2$ and for this we define
\begin{equation*}
    \bar{Q} = A \Rhat^{-1}. 
\end{equation*}
Note that $\Vert \bar{Q} \Vert_2$ is bounded in \eqref{eq:norm_ARhatinv}.
We then write
\begin{equation}\label{eq:M_definition}
    M = \Rhat^{-1} \left( I -  \bar{Q}^T \bar{Q} \right) \Rhat.
\end{equation}
Note that 
\begin{equation*}
     I -  \bar{Q}^T \bar{Q} = \tilde{Q}^T E \Rhat^{-1} + \Rhat^{-T} E^T \tilde{Q} - \Rhat^{-T} E^T E \Rhat^{-1}.
\end{equation*}
Taking the norm and using \eqref{eq:E_norm} and \eqref{eq:Rhatinverse_bound} gives
\begin{align*}
    \Vert  I -  \bar{Q}^T \bar{Q}  \Vert_2 \leq & \, 2 \Vert E \Vert_2 \Vert \Rhat^{-1} \Vert_2 + \Vert E \Vert_2^2 \Vert \Rhat^{-1} \Vert_2^2 \nonumber\\
    \lesssim  & \, 2 n^{1/2} \tg_{mn} \kappa(A) + n \tg_{mn}^2 \kappa(A)^2. \nonumber 
\end{align*}
Combining this with \eqref{eq:M_definition} and \eqref{eq:cond_no_Rhat_bound_A} results in
\begin{align}
    \Vert  M  \Vert_2 \leq & \,  2 n^{1/2} \tg_{mn} \kappa(A) \kappa(\Rhat)  + n \tg_{mn}^2 \kappa(A)^2 \kappa(\Rhat) \nonumber\\
    \leq & \, 2 n^{1/2} \tg_{mn}  \kappa(A)^2 \left( 1 + \mathcal{O}(u\kappa(A)) \right)^2 + n \tg_{mn}^2  \kappa(A)^3 \left( 1 + \mathcal{O}(u\kappa(A)) \right)^3 \nonumber\\
    \lesssim & \,  2 n^{1/2} \tg_{mn} \kappa(A)^2.  \label{eq:M_norm_bound}
\end{align}
We proceed by bounding the norm of $ (\Rhat^T \Rhat )^{-1} \left( A^T f + k \right)$. The error $f$ can be bounded as
\begin{equation}\label{eq:f_bound}
\Vert f \Vert_2 \leq n^{1/2} \tg_n^{(r)} \left( \Vert A \Vert_2 \Vert \xhat_i \Vert_2 + \Vert b \Vert_2\right).    
\end{equation}
We write the error vector $k$ as $ k = k_1 + k_2$, where $k_1$ accounts for the error in computing $fl(A^T \rhat_i)$ and $k_2$ accounts for the error in two triangular solves. Then
\begin{equation}\label{eq:boubd_k1}
    \Vert k_1 \Vert_2 \leq n^{1/2} \gamma_n^{(r)} \Vert A \Vert_2 \Vert \rhat \Vert_2 \lesssim n^{1/2} \gamma_n^{(r)} \Vert A \Vert_2  \left( \Vert A \Vert_2 \Vert \xhat_i \Vert_2 + \Vert b \Vert_2 \right),
\end{equation}
where we ignore the terms in $u_r^2$. To bound $\Vert k_2 \Vert_2$, we use standard results on the backward error in triangular solves, e.g. \cite[Theorem 8.5]{high:ASNA2}, and obtain
\begin{equation}\label{eq:trinag_solves_with_backward_error}
    \left( \Rhat^T + \Delta R_1^T \right)\left( \Rhat + \Delta R_2 \right) \delta \xhat_i = A^T \rhat_i + k_1,
\end{equation}
where $\Vert \Delta R_j \Vert_F \leq \gamma_n \Vert \Rhat \Vert_F$ for $j=1,2$. Then using \eqref{eq:sn_triang_solves} we have
\begin{equation*}
    k +  \left( \Rhat^T \Delta R_2  + \Delta R_1^T \Rhat + \Delta R_2 \Delta R_1^T \right) \delta  \xhat_i = k_1
\end{equation*}
and
\begin{equation*}
    k_2 = k - k_1 = - \left( \Rhat^T \Delta R_2  + \Delta R_1^T \Rhat + \Delta R_2 \Delta R_1^T \right) \delta  \xhat_i.
\end{equation*}
Multiplying with $\Rhat^{-T}$ on the left and moving $\Rhat$ gives
\begin{equation*}
    \Rhat^{-T} k_2 = - \left( \Delta R_2 \Rhat^{-1} +  \Rhat^{-T} \Delta R_1^T  +  \Rhat^{-T} \Delta R_2 \Delta R_1^T \Rhat^{-1} \right) \Rhat \delta  \xhat_i.
\end{equation*}
We thus have
\begin{equation}\label{eq:Rhat-T_k2_bound_interim}
    \Vert \Rhat^{-T} k_2 \Vert_2 \leq \left( 2 n^{1/2} \gamma_n  \kappa(\Rhat) + n \gamma_n^2  \kappa(\Rhat)^2 \right) \Vert \Rhat \delta  \xhat_i \Vert_2
\end{equation}
and need to bound $\Vert \Rhat \delta  \xhat_i \Vert_2$. To do this, we write \eqref{eq:trinag_solves_with_backward_error} as 
\begin{equation*}
     \Rhat^T \left(I +  \Rhat^{-T} \Delta R_1^T \right)\left( I + \Delta R_2 \Rhat^{-1} \right) \Rhat \delta \xhat_i = A^T \rhat_i + k_1.
\end{equation*}
Assuming that $\Vert \Rhat^{-T} \Delta R_1^T \Vert_2 < 1$ (which is essentially equivalent to $\kappa(A) < u^{-1}$), we use the approximation $\left(I +  \Rhat^{-T} \Delta R_1^T \right)^{-1} \approx I - \Rhat^{-T} \Delta R_1^T $ and an equivalent argument for $\left( I + \Delta R_2 \Rhat^{-1} \right)^{-1}$ to obtain
\begin{equation*}
    \Rhat \delta \xhat_i \approx \left( I - \Delta R_2 \Rhat^{-1} \right) \left(I - \Rhat^{-T} \Delta R_1^T \right) \left(   \Rhat^{-T} A^T \rhat_i +  \Rhat^{-T} k_1 \right).
\end{equation*}
Taking the norm and using $A^T r^* = 0$ gives
\begin{align*}
    \Vert \Rhat \delta \xhat_i \Vert_2 \leq & \, \left( 1 + n^{1/2} \gamma_n \kappa(\Rhat) \right)^2 \left(  \Vert   \Rhat^{-T} A^T \rhat_i \Vert_2  + \Vert  \Rhat^{-T} k_1 \Vert_2 \right) \\
    = & \, \left( 1 + n^{1/2} \gamma_n \kappa(\Rhat) \right)^2 \left(  \Vert   \bar{Q}^T (\rhat_i - r^*) \Vert_2  + \Vert  \Rhat^{-T} k_1 \Vert_2 \right).
\end{align*}
Combining this with \eqref{eq:Rhat-T_k2_bound_interim}, \eqref{eq:boubd_k1} and ignoring terms in $u^2\kappa(\Rhat)^2$ and $u u_r \kappa(\Rhat)^2$ gives
\begin{align}
    \Vert \Rhat^{-T} k_2 \Vert_2 \lesssim & \, 2 n^{1/2} \gamma_n  \kappa(\Rhat) \Vert \bar{Q}^T \Vert_2 \Vert \rhat_i - r^* \Vert_2  \nonumber \\
    \lesssim & \,  2 n^{1/2} \gamma_n  \kappa(\Rhat) \Vert r^* - (b  - A \xhat_i) \Vert_2. \label{eq:bound_Rhat_-T_k2}
\end{align}
Now using \eqref{eq:error_xtrue_x_i+1}, \eqref{eq:M_norm_bound}, \eqref{eq:f_bound}, \eqref{eq:boubd_k1}, \eqref{eq:bound_Rhat_-T_k2} and 
$\Vert e \Vert_2 \leq u \Vert \xhat_{i+1} \Vert_2 $ \cite[eq. 2.5]{high:ASNA2} we obtain
\begin{align*}
    \Vert x^* - \xhat_{i+1} \Vert_2 \leq & \, \Vert M ( x^* - \xhat_i )\Vert_2 + \Vert (\Rhat^T \Rhat )^{-1} \left( A^T f + k \right)\Vert_2 + \Vert e \Vert_2 \nonumber \\
    \leq & \, \Vert M \Vert_2 \Vert x^* - \xhat_i \Vert_2 +  \Vert \Rhat^{-1} \bar{Q}^T f \Vert_2 + \Vert \Rhat^{-1} \Rhat^{-T} k \Vert_2 + \Vert e \Vert_2 \nonumber \\
    \lesssim & \, 2 n^{1/2} \tg_{mn}  \kappa(A)^2 \Vert  x^* - \xhat_i \Vert_2 \nonumber \\
    & \, + n^{1/2} \tg_n^{(r)} \Vert \Rhat^{-1} \Vert_2 \left( \Vert A \Vert_2 \Vert \xhat_i \Vert_2 + \Vert b \Vert_2\right) \nonumber \\
    & \, + n^{1/2} \gamma_n^{(r)} \Vert \Rhat^{-1} \Vert_2^2 \Vert A \Vert_2  \left( \Vert A \Vert_2 \Vert \xhat_i \Vert_2 + \Vert b \Vert_2 \right) \nonumber \\
    & \, + 2 n^{1/2} \gamma_n \kappa(\Rhat) \Vert \Rhat^{-1} \Vert_2  \Vert r^* - (b  - A \xhat_i) \Vert_2 \nonumber \\
    & \, + u \Vert \xhat_{i+1} \Vert_2 \nonumber \\
    \leq & \, 2 n^{1/2} \tg_{mn} \kappa(A)^2 \Vert  x^* - \xhat_i \Vert_2 \nonumber \\
    & \, + 2 n^{1/2} \gamma_n  \kappa(A)^2 \frac{\Vert r^* - (b  - A \xhat_i) \Vert_2 }{\Vert A \Vert_2 }\nonumber \\
     & \, +  u \Vert \xhat_{i+1} \Vert_2 + n^{1/2} \gamma_n^{(r)} \kappa(A)^2 \frac{\Vert A \Vert_2 \Vert \xhat_i \Vert_2 + \Vert b \Vert_2}{\Vert A \Vert_2}. \nonumber 
\end{align*}
Using $r^* - (b  - A \xhat_i) = A(\xhat_i - x^*)$ gives
\begin{align}\label{eq:forward_error_change_seminormal}
     \Vert x^* - \xhat_{i+1} \Vert_2 \leq & \, n^{1/2} \tg_{mn}  \kappa(A)^2 \Vert  x^* - \xhat_i \Vert_2 \nonumber \\
     & \, + u \Vert \xhat_{i+1} \Vert_2 + n^{1/2} \gamma_n^{(r)} \kappa(A)^2 \frac{\Vert A \Vert_2 \Vert \xhat_i \Vert_2 + \Vert b \Vert_2}{\Vert A \Vert_2}.
\end{align}
\end{proof}

\subsubsection{Comparison with corrected semi-normal equations}
We now contrast the results in the previous section with the well-known results for the corrected semi-normal equations method of Bj\"{o}rck \cite{bjorck1987stability}, which is equivalent to running Algorithm \ref{alg:semi-normal_approach} with only one refinement iteration, computing $x_0$ by solving the semi-normal equations in precision $u$, and using $u_r=u$. 
Under these assumptions on $x_0$ but keeping $u_r$ general, combining the bound for $\Vert x^* - x_0 \Vert_2$ in \cite[eq. 3.11]{bjorck1987stability} with \eqref{eq:forward_error_change_seminormal} gives the following bound for the forward error after one iteration of LSIR:
\begin{align*}
    \Vert x^* - \xhat_1 \Vert_2 \leq & \, c(m,n) u^2 \kappa(A)^3 \left( \Vert  x^*  \Vert_2  + \frac{\Vert b \Vert_2}{\Vert A \Vert_2 }\right) \nonumber \\
     & \, + u \Vert \xhat_1 \Vert_2 + n^{1/2} \gamma_n^{(r)} \kappa(A)^2 \frac{\Vert A \Vert_2 \Vert \xhat_0 \Vert_2 + \Vert b \Vert_2}{\Vert A \Vert_2}.
\end{align*}
We can thus expect LSIR to reach $ \Vert x^* - \xhat_1 \Vert_2 / \Vert x^* \Vert_2 = \mathcal{O}(u)$ and thus converge after one iteration if $u_r = u^2$ and
\begin{equation}
    u \kappa(A)^3 \left( 2  + \frac{\Vert r^* \Vert_2}{\Vert A \Vert_2 \Vert  x^*  \Vert_2}\right) < 1,
    \label{eq:cond_csne}
\end{equation}
which is equivalent to $\kappa(A) < u^{-1/3}$ when $\frac{\Vert r^* \Vert_2}{\Vert A \Vert_2 \Vert  x^*  \Vert_2} = \mathcal{O}(1)$, and $\kappa(A) \frac{\Vert r^* \Vert_2^{1/3}}{\Vert A \Vert_2^{1/3} \Vert  x^*  \Vert_2^{1/3}} < u^{-1/3}$ for large $\Vert r^* \Vert_2$.

Note that the condition \eqref{eq:cond_csne} is stricter than the condition \eqref{eq:cond_sneir} in the case that $u_r=u^2$.
 If $u_r=u$, then the accuracy of Algorithm \ref{alg:semi-normal_approach} is limited by the last term in \eqref{eq:forward_error_change_seminormal}, which can be written as
\begin{equation*}
    u \kappa(A)^2 \left( 2  + \frac{\Vert r^* \Vert_2}{\Vert A \Vert_2 \Vert  x^*  \Vert_2}\right)
\end{equation*}
and we can expect $ \Vert x^* - \xhat_1 \Vert_2 / \Vert x^* \Vert_2 = \mathcal{O}(u)$ only if $\kappa(A) = \mathcal{O}(1)$ and $\frac{\Vert r^* \Vert_2}{\Vert A \Vert_2 \Vert  x^*  \Vert_2} = \mathcal{O}(1)$. In the first iterations of LSIR however we can expect the first term to dominate and thus we can get some improvement if \eqref{eq:kappaA_condition_seminormal} holds.

It is thus clear that in many cases, there is a benefit to allowing more than one iteration of refinement, as mentioned in \cite[Chapter 6]{bjorck1996numerical} for the uniform precision case. We note that there are no assumptions on how $x_0$ is computed in our general analysis in the previous section. 
If it is of poor quality, LSIR may indeed need a few iterations to correct the error.

We note that Bj\"{o}rck mentions performing fixed precision iterative refinement with normal equations, where the $R$ factor is obtained via a Cholesky decomposition of $A^TA$ \cite[Chapter 6]{bjorck1996numerical}. Our numerical experiments (not reported here) show that two precision LSIR with normal equations stops working for problems with slightly smaller condition number than in experiments with LSIR with semi-normal equations in Section~\ref{sec:numerics}.


\section{Augmented system approach}\label{sec:augmented}
Motivated by the sensitivity of the LS system approach to the size of the residual, Bj\"{o}rck \cite{bjorck1967iterative} exploits the fact that $x$ is the solution of an overdetermined least-squares problem \eqref{eq:LS} if and only if it satisfies the orthogonality property \cite[Theorem 1.1.2]{bjorck1996numerical} 
\begin{equation*}
    A^T r = 0. \label{eq:orthog_prop}
\end{equation*}
Bj\"{o}rck proposes performing iterative refinement on an augmented $(m +n) \times (m+n)$ linear system of equations
\begin{equation}\label{eq:augmented_system}
\underbrace{\begin{bmatrix}
 I & A\\ A^T & 0
\end{bmatrix}}_{\Atld}
\begin{bmatrix}
r\\  x
\end{bmatrix}
=
\begin{bmatrix}
 b \\0
\end{bmatrix},    
\end{equation}
which is equivalent to the normal equations. 
We describe the approach in Algorithm~\ref{alg:augmented_approach}; see \cite{demmel2009extra} for a variant of this algorithm and some implementation details. Note that both $r$ and $x$ are refined. In order to compare this approach with the LS system approach further, in the following sections we explore the conditions for recognizing the correct solution $x_i$ and the available convergence guarantees when the correction equation for the system \eqref{eq:augmented_system} is solved via a QR factorization of $A$. A general stability analysis when \eqref{eq:augmented_system} is solved via different QR factorizations is provided in \cite{bjorck1994solution}.

\begin{algorithm}
\caption{Two-precision LSIR based on the augmented system approach}\label{alg:augmented_approach}
\algorithmicrequire  $A \in \mathbb{R}^{m \times n}$, $b \in \mathbb{R}^m$, initial solution $x_0$, precisions $u_r$ and $u$ where $u_r \leq u^2$ \\
\algorithmicensure approximate solution $x$
\begin{algorithmic}[1]
\State $i=0$
\State Compute $r_0 = b - A x_0$ \Comment{$u$ }
\While{not converged}
\State Compute    $
        \begin{bmatrix}
 f_i \\g_i
\end{bmatrix}
= \begin{bmatrix}
 b \\0
\end{bmatrix}
-
\begin{bmatrix}
 I & A\\ A^T & 0
\end{bmatrix}
\begin{bmatrix}
r_i \\  x_i
\end{bmatrix}
$ \Comment{$u_r$}
\State Solve \begin{equation}\label{eq:correction_equation_augmented}
\begin{bmatrix}
 I & A\\ A^T & 0
\end{bmatrix}
\begin{bmatrix}
\delta r_i \\ \delta x_i
\end{bmatrix}
=
\begin{bmatrix}
 f_i \\ g_i
\end{bmatrix}
\end{equation} \Comment{$u$ }
\State Update $
\begin{bmatrix}
r_{i+1} \\  x_{i+1} 
\end{bmatrix} = 
\begin{bmatrix}
r_i \\  x_i 
\end{bmatrix}
+ 
\begin{bmatrix}
\delta r_i \\ \delta x_i 
\end{bmatrix} $ \Comment{$u$} 
\State $i=i+1$
\EndWhile
\end{algorithmic}
\end{algorithm}

We note that Bj\"{o}rck discusses other LSIR approaches based on the remainders $f_i$ and $g_i$ as in Algorithm~\ref{alg:augmented_approach} and using either the semi-normal equations, where the triangular factor is obtained via QR, or the normal equations method employing the Cholesky decomposition \cite{bjorck1978comment}. A detailed convergence analysis of these schemes is not provided in \cite{bjorck1978comment} and our numerical experiments, which are not reported here, show that it performs similarly to the semi-normal equations approach described in the previous section. We thus do not explore this approach further in this note. 

\subsection{Solving via QR decomposition}\label{sec:augmented_solving_via_qr}
We now detail how the augmented system can be solved via a QR decomposition of $A$ and state bounds for the forward error in the update to the solution $\delta x$ and the update to the residual $\delta r$.

Let $A = Q \begin{pmatrix}
    R \\ 0
\end{pmatrix} = Q \widetilde{R}$
be the QR decomposition of $A$, where $Q \in \mathbb{R}^{m \times m}$ and $R \in \mathbb{R}^{n \times n}$. We can write the block $LDL^T$ decomposition of $\Atld$ as
\begin{equation*}
    \Atld = \underbrace{\begin{pmatrix}
        I & 0 \\  
        \widetilde{R}^T Q^T & I
    \end{pmatrix}}_L 
    \underbrace{\begin{pmatrix}
        I & 0 \\ 
    0 & - R^T R
    \end{pmatrix}}_D
    \begin{pmatrix}
        I & Q  \widetilde{R}\\ 
        0 & I 
    \end{pmatrix}. 
\end{equation*}
Then the solution of \eqref{eq:correction_equation_augmented} is 
\begin{align}
    \begin{bmatrix}
\delta r_i \\ \delta x_i
\end{bmatrix}
=  & \, L^{-T} D^{-1} L^{-1} 
\begin{bmatrix}
 f_i \\ g_i
\end{bmatrix} \nonumber \\
=  & \, \begin{pmatrix}
        I & - Q \widetilde{R} \\  
       0 & I
    \end{pmatrix} 
    \begin{pmatrix}
        I & 0 \\ 
    0 & - R^{-1} R^{-T}
    \end{pmatrix}
    \begin{pmatrix}
        I & 0 \\  
       - \widetilde{R}^T Q^T & I
    \end{pmatrix} 
    \begin{bmatrix}
 f_i \\ g_i
\end{bmatrix} \nonumber \\
=  & \, \begin{bmatrix}
Q \begin{pmatrix}
    0 & 0 \\ 0 & I
\end{pmatrix} Q^T f_i  + Q \begin{pmatrix}
     R^{-T} \\ 0
 \end{pmatrix} g_i \\ \begin{pmatrix}
     R^{-1} & 0
 \end{pmatrix} Q^T f_i - R^{-1} R^{-T}  g_i
\end{bmatrix}, \label{eq:augmented_exact_update}
\end{align}
which can be computed in the following procedure proposed in \cite{bjorck1967iterative}:
\begin{align}
    h = & \, R^{-T} g_i, \label{eq:solve_augmented_QR_first}\\
    k = &  \,Q^T f_i = \begin{bmatrix}
        k_1 \\ k_2
    \end{bmatrix} \\
    \delta r = & \, Q \begin{bmatrix}
        h \\ k_2
    \end{bmatrix}, \\
    \delta x = &  \, R^{-1}(k_1 - h). \label{eq:solve_augmented_QR_last}
\end{align}
The forward error bounds for both $\delta x$ and $\delta r$ are stated in the following lemma.
\begin{lemma}
    Let $A \in \mathbb{R}^{m \times n}$ with $n \leq m$ be full rank and the system \eqref{eq:correction_equation_augmented} is solved via a Householder QR factorization of $A$ as detailed in \eqref{eq:solve_augmented_QR_first}-\eqref{eq:solve_augmented_QR_last}.
    Then if \[ \beta = (\sqrt{2}+1) m n^{3/2} \tg_m \kappa(A) < 1, \] we set \[\chi = (1 - \beta)^{-1/2} \kappa(A)\]    
    and we have 
\begin{equation}
    \Vert A \Vert_2  \Vert \delta \xhat - \delta x \Vert_2 
    \leq \, m n^{3/2} \tg_m \chi \left( \chi (\Vert \delta r \Vert_2 + \Vert \delta \rhat \Vert_2 ) +  \Vert A \Vert_2 \Vert \delta x \Vert_2 + \Vert \fhat \Vert_2 + \Vert \delta \rhat \Vert_2 \right) \label{eq:fe_bound_dx_bjorck}
\end{equation}
and
\begin{equation}
     \Vert \delta \rhat - \delta r \Vert_2 
     \leq  \, m n^{3/2} \tg_m \left( \chi (\Vert \delta r \Vert_2 + \Vert \delta \rhat \Vert_2 ) + \Vert A \Vert_2 \Vert \delta x \Vert_2 + \Vert \fhat \Vert_2 + \Vert \delta \rhat \Vert_2 \right). \label{eq:fe_bound_dr_bjorck}
\end{equation} 
\end{lemma}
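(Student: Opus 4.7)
The plan is to apply standard rounding error analysis to the four-stage procedure \eqref{eq:solve_augmented_QR_first}--\eqref{eq:solve_augmented_QR_last}, express each computed intermediate as the exact output of a slightly perturbed subproblem, and then compare to the exact formulas in \eqref{eq:augmented_exact_update} to obtain the forward-error bounds. First, the Householder multiplications in steps 2 and 3 each satisfy a standard backward error bound \cite[Lem.~19.3]{high:ASNA2}: the computed $\hat k$ equals $Q^T(\fhat_i + \Delta f)$ and the computed $\delta\rhat$ equals $Q$ applied to the concatenation of $\hat h$ and $\hat k_2$ plus a perturbation $\Delta_r$, with each perturbation bounded by a constant multiple of $mn^{1/2}\tg_m$ times the norm of the corresponding input or output vector. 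For the triangular solves in steps 1 and 4, \cite[Thm.~8.5]{high:ASNA2} gives $(R+\Delta R_1)^T \hat h = \ghat_i$ and $(R+\Delta R_2)\delta\xhat = \hat k_1 - \hat h$ with $\Vert\Delta R_j\Vert_F \leq n^{1/2}\gamma_n\Vert R\Vert_F$ for $j=1,2$.

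Next I would pass to the inverses via a first-order Neumann expansion. The condition $\beta < 1$ guarantees that $I + R^{-T}\Delta R_1^T$ and $I + R^{-1}\Delta R_2$ are invertible with norms bounded by $(1-\beta)^{-1/2}$, producing the constant $\chi = (1-\beta)^{-1/2}\kappa(A)$ and the associated estimate $\Vert R^{-1}\Vert_2 \lesssim \chi/\Vert A\Vert_2$. Substituting into the exact formulas of \eqref{eq:augmented_exact_update} yields closed-form expressions for $\delta\rhat-\delta r$ (involving $Q$ and $R^{-T}$) and for $\delta\xhat-\delta x$ (additionally involving $R^{-1}$); the single $\chi$ factor in \eqref{eq:fe_bound_dr_bjorck} and the $\chi^2$ factor in the leading term of \eqref{eq:fe_bound_dx_bjorck} emerge here, the latter because the output of step 1 feeds step 4.

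The key observation in obtaining the precise form of the right-hand sides is that $Q^T\delta r$ decomposes exactly as the concatenation of $h$ and $k_2$ (and similarly for the computed versions, up to the step-3 perturbation), so by orthogonality $\Vert h\Vert_2 \leq \Vert\delta r\Vert_2$ and $\Vert\hat h\Vert_2 \lesssim \Vert\delta\rhat\Vert_2$. This converts the step-1 triangular-solve error, which naively scales with $\Vert\ghat_i\Vert_2$, into the $\chi(\Vert\delta r\Vert_2 + \Vert\delta\rhat\Vert_2)$ term present in both bounds. The remaining contributions $\Vert\fhat\Vert_2$ and $\Vert\delta\rhat\Vert_2$ (without a $\chi$ factor) come from the Householder application errors in steps 2 and 3 respectively, while the $\Vert A\Vert_2\Vert\delta x\Vert_2$ term arises from the step-4 triangular solve via $\Vert R\,\delta x\Vert_2 \leq \Vert A\Vert_2\Vert\delta x\Vert_2$.

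The main obstacle is the bookkeeping rather than any single sharp estimate: perturbations from each of the four stages propagate into the next, so one must cleanly route every contribution to the correct right-hand side term without double counting. A secondary subtlety is ensuring that the $\chi^2$ factor in \eqref{eq:fe_bound_dx_bjorck} arises only through the term that actually traverses both triangular solves, so that a single $\chi$ (or none at all) suffices for the remaining contributions and for the companion bound \eqref{eq:fe_bound_dr_bjorck}.
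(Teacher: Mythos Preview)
Your approach is correct but differs in organization from the paper's. The paper's proof is a one-line appeal to two existing results: it invokes \cite[Theorem~20.4]{high:ASNA2}, which packages the rounding-error analysis of the entire four-stage procedure \eqref{eq:solve_augmented_QR_first}--\eqref{eq:solve_augmented_QR_last} into a single backward-error statement (the computed $(\delta\rhat,\delta\xhat)$ solves a perturbed saddle-point system with perturbation of size $mn^{3/2}\tg_m$), and then feeds that backward error into the perturbation bound for saddle-point systems in \cite[Theorem~1]{bjorck1967iterative}, which is where the quantity $\chi=(1-\beta)^{-1/2}\kappa(A)$ and the structure of the right-hand sides of \eqref{eq:fe_bound_dx_bjorck}--\eqref{eq:fe_bound_dr_bjorck} originate.

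You instead unpack both of these theorems and argue from more primitive building blocks (Lemma~19.3 and Theorem~8.5 of \cite{high:ASNA2}), propagating errors stage by stage and recovering the $\chi$ and $\chi^2$ factors directly from the triangular solves. This is sound and self-contained, and your identification of the key decomposition $Q^T\delta r = (h^T,k_2^T)^T$ is exactly the mechanism that drives Bj\"{o}rck's perturbation theorem. The trade-off is that the paper's modular route is shorter and reuses known results verbatim, whereas your route avoids external dependencies at the cost of redoing the bookkeeping that those two theorems already encapsulate.
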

\begin{proof}
    The proof follows from combining the perturbation result for a saddle point system as in \eqref{eq:correction_equation_augmented} in \cite[Theorem 1]{bjorck1967iterative} with the result for the finite precision analysis of \eqref{eq:solve_augmented_QR_first}-\eqref{eq:solve_augmented_QR_last} in \cite[Theorem 20.4]{high:ASNA2}.
\end{proof}

\subsection{Correction to true solution}
We wish to understand if the augmented system approach avoids the restrictive condition \eqref{eq:gol_wilk_condition} for recognizing the correct solution in the LS system approach. Assume that $x_i = x^*$. In order to determine what the ideal update should be, we first ignore the errors in computing $f_i$ and $g_i$ and in the solve. 
Then we have
\begin{equation*}
     \begin{bmatrix}
 f_i \\g_i
\end{bmatrix}
= \begin{bmatrix}
 b \\0
\end{bmatrix}
-
\begin{bmatrix}
 I & A\\ A^T & 0
\end{bmatrix}
\begin{bmatrix}
r_i \\  x^*
\end{bmatrix} = \begin{bmatrix}
 r^* - r_i \\- A^T r_i
\end{bmatrix}.
\end{equation*}
Combining this with \eqref{eq:augmented_exact_update} gives
\begin{equation}\label{eq:dr_dx_explicit_exact_x}
    \begin{bmatrix}
\delta r_i \\ \delta x_i
\end{bmatrix} = \begin{bmatrix}
Q \begin{pmatrix}
    0 & 0 \\ 0 & I
\end{pmatrix} Q^T ( r^* - r_i)  - Q \begin{pmatrix}
     R^{-T} \\ 0
 \end{pmatrix} A^T r_i \\ \begin{pmatrix}
     R^{-1} & 0
 \end{pmatrix} Q^T ( r^* - r_i) + R^{-1} R^{-T}  A^T r_i
\end{bmatrix} = \begin{bmatrix}
r^* - r_i \\ 0
\end{bmatrix},
\end{equation}
where we use $Q  =  \begin{bmatrix}
    Q_1 & Q_2
\end{bmatrix} $ with $Q_1 \in \mathbb{R}^{m \times n}$ and $Q_2 \in \mathbb{R}^{m \times (m-n)}$ and the fact that $Q_1^T r^* = 0$. Thus the true solution to the correction equation is $\delta x_i = 0$.

We hence want the computed solution to satisfy $\Vert \delta \xhat_i  \Vert_2 / \Vert x^* \Vert_2 = \mathcal{O}(u)$.
Using \eqref{eq:fe_bound_dx_bjorck} with $\delta x_i = 0$ and the fact that $\chi >1 $, the condition for recognizing the correct solution is
\begin{equation}\label{eq:condition_bjorck}
     \chi^2\frac{\Vert \delta r_i \Vert_2 + 2 \Vert \delta \rhat_i \Vert_2  +   \Vert \fhat_i \Vert_2}{\Vert A \Vert_2 \Vert x^* \Vert_2}  < 1.
\end{equation}
Comparing the condition \eqref{eq:condition_bjorck} with the equivalent condition for the LS approach in \eqref{eq:gol_wilk_condition}, we observe that the numerator of \eqref{eq:condition_bjorck} includes the \emph{corrections} to the residual whereas the numerator of \eqref{eq:gol_wilk_condition} includes the residual itself. The corrections to the residual can be of a similar size as the residual in the first refinement iteration if a very poor quality initial solution $x_0$ is used, but the error in the residual is reduced in the following iterations; see the comment on the initial rate of improvement in the following section. Regarding the relevant condition for the semi-normal equations case \eqref{eq:condition_recognise_semi_normal}, note that the right-hand side of \eqref{eq:condition_recognise_semi_normal} includes the unit roundoff whereas \eqref{eq:condition_bjorck} is independent of it. Thus \eqref{eq:condition_bjorck} is more restrictive than \eqref{eq:condition_recognise_semi_normal} unless $\Vert \delta r_i \Vert_2 \approx u$.

\subsection{Initial rate of convergence and sufficient conditions for convergence}
Bj\"{o}rck analyzes the convergence of LSIR when the augmented system is solved via the Householder QR approach detailed in Section~\ref{sec:augmented_solving_via_qr} \cite[Section 6]{bjorck1967iterative}. It is assumed that the computations are performed in single precision and the inner products are accumulated in double precision throughout the computations except when updating $r_{i+1}$ and $x_{i+1}$. Bj\"{o}rck points out that using higher precision is only essential in the computation of the residuals and if the accumulation is omitted from other computations, then the error bounds would increase by at most a factor of $m$. 

Bj\"{o}rck shows that the initial rate of improvement is proportional to the term 
\begin{equation*}
    \rho_i = \frac{\Vert \hat{x}_i - x^* \Vert_2}{\Vert \hat{x}_{i-1} - x^* \Vert_2} < c(m,n) u \kappa(A),
\end{equation*}
where $c(m,n)$ is a constant. The bound for the initial rate of improvement for the residual is the same. Note that the bound does not depend on the size of the residual and thus it is possible to get some improvement in the first iterations even for problems with large $\Vert r^* \Vert_2$. The dependency on $\kappa(A)$ rather than $\kappa(A)^2$ also shows that initial improvement is possible even for highly ill-conditioned problems.

However, $\kappa(A)^2$ arises in the conditions that guarantee a small relative error in the solution. It is shown that 
\begin{equation*}\label{eq:lim_x}
    \lim_{i \rightarrow \infty} \Vert \xhat_i - x^* \Vert_2 = \mathcal{O}(u) \Vert x^* \Vert_2 
\end{equation*}
is guaranteed to hold if
\begin{equation}\label{eq:condition_x_convergence}
     \chi^2 \frac{  \Vert r^* \Vert_2}{\Vert A \Vert_2 \Vert x^* \Vert_2 } \leq \frac{1}{{c}_1(m,n,u,u_r)}, 
\end{equation}
where ${c}_1(m,n,u,u_r)$ is a constant that depends on $m$, $n$, $u$, and $u_r$.
Ignoring the dimensional constants, we can approximate ${c}_1(m,n,u,u_r)$ by its leading term $u$, and also approximate $\chi$ by $\kappa(A)$ to obtain the simplified convergence condition
\begin{equation}\label{eq:condition_x_convergence_simplified}
    \kappa(A)^2 \frac{  \Vert r^* \Vert_2}{\Vert A \Vert_2 \Vert x^* \Vert_2 } \leq u^{-1}. 
\end{equation}
Bj\"{o}rck also shows that a sufficient condition for
\begin{equation*}\label{eq:lim_residual}
    \lim_{i \rightarrow \infty} \Vert \rhat_i - r^* \Vert_2 = \mathcal{O}(u) \Vert r^* \Vert_2 
\end{equation*}
to hold is
\begin{equation}\label{eq:condition_r_convergence}
    \frac{\Vert A \Vert_2 \Vert x^* \Vert_2 }{\Vert r^* \Vert_2} \leq {c}_2(m,n,u,u_r),
\end{equation}
where ${c}_2(m,n,u,u_r)$ is another constant that depends on $m$, $n$, $u$, and $u_r$.

The following sensitivities of the augmented system approach to LSIR can be observed.
\begin{itemize}
    \item The right-hand side of \eqref{eq:condition_x_convergence} is inversely proportional to $\chi^2$ which is proportional to $\kappa(A)^2$. We can thus expect the procedure to become less reliable in correcting $x$ when $\kappa(A)$ increases.
    \item The left-hand side of \eqref{eq:condition_r_convergence} grows when $\Vert r^* \Vert_2$ is reduced. It is thus suggested that the refinement of the residual is sensitive to the size of it and if the residual norm is very small then the procedure may fail to correct it to the required accuracy.
\end{itemize}

\textbf{NB:} Bj\"{o}rck points out that for any matrix $A$ there exist right-hand sides $b$ such that either \eqref{eq:condition_x_convergence} or \eqref{eq:condition_r_convergence} does not hold. However, at least one of \eqref{eq:condition_x_convergence} or \eqref{eq:condition_r_convergence} \emph{always} holds. 

We now consider how the above convergence guarantees compare to the guarantees for other approaches. Recall that there are no convergence guarantees for the LS approach. Convergence of the solution of the semi-normal equations approach is guaranteed if \eqref{eq:cond_sneir} holds. When $u_r=u^2$ the condition \eqref{eq:cond_sneir} is more limiting than \eqref{eq:condition_x_convergence}, because a small $\Vert r^* \Vert_2$ allows \eqref{eq:condition_x_convergence} to hold for problems with  $\kappa(A)>u^{-1/2}$. If we have $\kappa(A)>u^{-1/2}$ then we need to set $u_r < u^2$ for \eqref{eq:cond_sneir} to hold independent of $\Vert r^* \Vert_2$.

\subsection{Scaling of the augmented system}\label{sec:scaling_augmented_system}

Theory for the iterative refinement of linear systems of equations can also be applied to analyze the refinement of \eqref{eq:augmented_system}. When \eqref{eq:correction_equation_augmented} in Algorithm~\ref{alg:augmented_approach} is solved via backward and forward substitution using the $LDL^T$ decomposition of $\Atld$ computed in precision $u$ or using the QR factors of $A$ as described in Section~\ref{sec:augmented_solving_via_qr}, then the forward and backward error of the augmented system is guaranteed to converge if $\kappa(\Atld)$ is sufficiently less than $u^{-1}$; see, e.g., \cite[Chapters 12 and 20.5]{high:ASNA2}. The convergence is thus independent of the size of the residual, but depends on $\kappa(\Atld)$ instead of $\kappa(A)$. Bj\"{o}rck \cite{bjorck1967iterative} showed that $\kappa(\Atld)$ relates to the singular values of $A$ and can be changed significantly with the scaling 
\begin{equation*}
   \begin{pmatrix}
        I & 0 \\ 0 & \alpha^{-1} I
    \end{pmatrix}
    \begin{pmatrix}
        I & A \\ A^T & 0
    \end{pmatrix}
       \begin{pmatrix}
        \alpha I & 0 \\ 0 & I
    \end{pmatrix} =     \begin{pmatrix}
       \alpha  I & A \\ A^T & 0
    \end{pmatrix} = \Atld_{\alpha},
\end{equation*}
where $\alpha \in \mathbb{R}$ is positive. 
Then
\begin{equation*}
    \kappa(\Atld_{\alpha}) = \frac{ \alpha + \sqrt{ \alpha^2 + 4 \sigma_{max}(A)^2}}{\min \{ 2 \alpha, \sqrt{ \alpha^2 + 4 \sigma_{min}(A)^2} -  \alpha\}},
\end{equation*}
where $\sigma_{max}(A)$ and $\sigma_{min}(A)$ are the largest and smallest singular values of $A$, respectively. If we set $\alpha = 2^{-1/2}\sigma_{min}(A)$, then $ \kappa(\Atld_{\alpha})$ reaches its minimum value and $\kappa(A) \leq \kappa(\Atld_{\alpha}) \leq 2 \kappa(A)$; note that $\sigma_{min}(A)$ is expensive to compute. The case without scaling, that is, setting $\alpha = 1$ gives
\begin{equation*}
    \kappa(\Atld_{\alpha}) = \frac{ 1 + \sqrt{ 1 + 4 \sigma_{max}(A)^2}}{\min \{ 2 , \sqrt{ 1 + 4 \sigma_{min}(A)^2} -  1\}},
\end{equation*}
which can be close to $\kappa(A)^2$ if $\sigma_{min}(A) \ll 1$. 

Arioli, Duff, and de Rijk \cite{arioli1989augmented} note that scaling not only reduces $\kappa(\Atld)$, but also may improve the quality of the computed $LDL^T$ factorization. They show that scaling (even if $\alpha$ is set to a non-optimal value) can enable convergence for some ill-conditioned problems or at least improve the quality of the computed solution if IR does not converge. The authors also propose a heuristic for choosing $\alpha$.

\section{Iterative solvers}\label{sec:iterative}

The theoretical analysis in the previous sections assumes that the least-squares problems are solved via Householder QR decomposition. The cost of computing this decomposition may be prohibitive when $A$ is very large or sparse. In this case, iterative solvers may be used for the correction equations. 

Irrespective of the solver choice, we are not aware of the existence of conditions that guarantee convergence for the LS system approach to LSIR, and as we have already mentioned, Golub and Wilkinson note that the method will not converge for some right-hand sides. If the LS approach is chosen regardless, then LSQR, which is a popular iterative method and only accesses $A$ via a procedure for computing matrix-vector products with $A$ and $A^T$, may be preferred by the user \cite{paige1982lsqr}. 

The semi-normal equations approach is essentially direct and requires a good quality $R$ factor of the QR decomposition. We note that the analysis can be applicable when $R$ is computed via methods other than Householder QR as long as they are backward stable, that is, \eqref{eq:HHQR_pert} and \eqref{eq:E_norm} hold. Iterative solvers could, however, be used to find $\delta x_i$ via the normal equations $A^T A \delta x_i = A^T b$ instead of the semi-normal equations in step~\ref{eq:correction_step_semi_normal} of Algorithm~\ref{alg:semi-normal_approach}. $A^TA$ is a symmetric positive definite matrix and thus the CG method can be used (see e.g. \cite{saad2003iterative}), although for CG there is no guarantee of backward stability in the usual sense; this approach may be preferable over the LS approach with LSQR because using $A^T r_i$ as the right-hand side in the correction equation reduces the impact of a large residual. Backward stability can be guaranteed for reasonably well-conditioned $A$ if preconditioned GMRES is used; see the discussion in \cite[p. A268]{higham2021exploiting}.

The augmented system approach with an iterative solver for the correction equations has been studied before. Various solvers and preconditioners (possibly computed in a low precision $u_f$) can be used 
as long as the solution satisfies certain conditions \cite{carson2020three}; the authors in \cite{carson2020three} use left-preconditioned GMRES. Scott and T\r{u}ma \cite{scott2022computational} use this approach with split-preconditioned GMRES and MINRES to solve sparse-dense LS problems. As we discussed in Section~\ref{sec:scaling_augmented_system}, convergence theory for iterative refinement for linear systems of equations can be applied in this case and thus convergence can be guaranteed depending on the condition number of the (potentially preconditioned) augmented matrix. The performance of the iterative solver itself will also depend on the properties of this matrix.  
We thus note that computing a good scaling factor and preconditioner is paramount to ensure good performance of the scheme. 

\section{Combining least-squares and augmented system approaches}\label{sec:combining_ls_augmented}

We further explore whether both $x$ and $r$ can be updated by solving least-squares problems, which also allows the use of various iterative solvers. We do this by considering the inverse of the augmented system matrix
\begin{equation*}
    \Atld^{-1} = \begin{pmatrix}
        P_N & (A^\dagger)^T \\
        A^\dagger & - (A^T A)^{-1}
    \end{pmatrix},
\end{equation*}
where $P_N = I - A A^{\dagger}$ is an orthogonal projector onto the null-space of $A^T$. The update in \eqref{eq:correction_equation_augmented} is then
\begin{equation}\label{eq:corrections_mixed_approach}
    \begin{bmatrix}
\delta r_i \\ \delta x_i
\end{bmatrix}
= \begin{pmatrix}
        P_N f_i  + (A^\dagger)^T g_i\\
        A^\dagger f_i - (A^T A)^{-1} g_i
    \end{pmatrix}.
\end{equation}
We observe that $(A^\dagger)^T  g_i$ is a minimum-norm solution to the under-determined least-squares problem 
\begin{equation}\label{eq:ls_problem_dr1}
    \min_{y_1} \Vert g_i - A^T y_1 \Vert_2.
\end{equation}
Note that $A^\dagger f_i$ is the solution of the least-squares problem
\begin{equation}\label{eq:ls_problem_dx1}
    \min_{y_2} \Vert f_i - A y_2 \Vert_2,
\end{equation}
and $- (A^T A)^{-1} g_i = - R^{-1} R^{-T} g_i$, where $R$ comes from the QR decomposition of $A$, is the solution of 
\begin{equation*}
    R^T R y_3 = - g_i, 
\end{equation*}
which is equivalent to the semi-normal equation approach to solving 
\begin{equation}\label{eq:ls_problem_dx2}
  \min_{y_3} \Vert r_i - A y_3 \Vert_2  
\end{equation}
when the right-hand side of the semi-normal equations is computed in a higher precision. Note that, however, since $r_i$ is used instead of $A^T r_i$, it is likely that the method is sensitive to the size of $\Vert r \Vert_2$.

We can thus obtain $\delta r_i$ and $\delta x_i$ in \eqref{eq:corrections_mixed_approach} via solving the least-squares problems \eqref{eq:ls_problem_dr1}, \eqref{eq:ls_problem_dx1}, and \eqref{eq:ls_problem_dx2} employing a least-squares solver of choice, and computing the projection $P_N f_i$. Note that the latter can be computed as $P_N f_i = ( I - A A^{\dagger})f_i = f_i - Ay_2 $, where $y_2$ is the solution to \eqref{eq:ls_problem_dx1}, that is, $P_N f_i$ is the residual of \eqref{eq:ls_problem_dx1}. 
We summarize the procedure in Algorithm~\ref{alg:combined_approach}. Note that all the least-squares problems can be solved at the same time and a block least-squares solver may be employed.

\begin{algorithm}
\caption{Two-precision LSIR combining least-squares and augmented system approaches}\label{alg:combined_approach}
\algorithmicrequire  $A \in \mathbb{R}^{m \times n}$, $b \in \mathbb{R}^m$, initial solution $x_0\in \mathbb{R}^n$, projector $P_N$ onto the the null-space of $A^T$, precisions $u_r$ and $u$ where $u_r \leq u$ \\
\algorithmicensure approximate solution $x$
\begin{algorithmic}[1]
\State $i=0$
\State Compute $r_0 = b - A x_0$ \Comment{$u$ }
\While{not converged}
\State Compute    $
        \begin{bmatrix}
 f_i \\g_i
\end{bmatrix}
= \begin{bmatrix}
 b \\0
\end{bmatrix}
-
\begin{bmatrix}
 I & A\\ A^T & 0
\end{bmatrix}
\begin{bmatrix}
r_i \\  x_i
\end{bmatrix}
$ \Comment{$u_r$}
\State Solve $\min_{\delta x_i^{(1)}} \Vert f_i - A \delta x_i^{(1)} \Vert_2$ \Comment{$u$} 
\State Solve $ \min_{\delta x_i^{(2)}} \Vert r_i - A \delta x_i^{(2)} \Vert_2$ \Comment{$u$} 
\State Solve $ \min_{\delta r_i^{(1)}} \Vert g_i - A^T \delta r_i^{(1)} \Vert_2$ \Comment{$u$} \label{step:combined_underdetermined_LS}
\State Compute $ \delta r_i^{(2)} = P_N f_i$ \Comment{$u$} 
\State Update $
\begin{bmatrix}
r_{i+1} \\  x_{i+1} 
\end{bmatrix} = 
\begin{bmatrix}
r_i \\  x_i 
\end{bmatrix}
+ 
\begin{bmatrix}
\delta r_i^{(1)} + \delta r_i^{(2)} \\ \delta x_i^{(1)} + \delta x_i^{(2)}
\end{bmatrix} $ \Comment{$u$} 
\State $i=i+1$
\EndWhile
\end{algorithmic}
\end{algorithm}

\section{Numerical experiments}\label{sec:numerics}

In order to compare the various methods numerically and to confirm the behavior described by the analysis, we consider a simple class of least-squares problems using MATLAB R2021a\footnote{Our code is available at \url{https://github.com/dauzickaite/LSIR}}. A $10^3 \times 10$ coefficient matrix $A$ is generated as
\[ A = \text{gallery}(\text{'randsvd'},[1e3,1e1],k,3)\]
and we change the $\kappa(A)$ values (the $k$ parameter above). We generate the right-hand side $b$ as 
\[ b = mp(A) mp(y) + mp(e), \]
where $mp(.)$ simulates precision accurate to 64 decimal digits via the Advanpix toolbox \cite{advanpix}, $y $ is a normalized random vector with entries that are uniformly distributed on the interval $(0,1)$, and $e$ is such that $A^T e = 0$. We then cast $b$ to double or single. We vary the size of $e$ and thus obtain results for different residual norms. The true solution $x^*$ and true residual $r^*$ are computed as
\begin{align*}
    x^* = & \, mp(A) \backslash mp(b); \\
    r^* = & \, mp(b) - mp(A)*x^*.
\end{align*}
Note that because we are aiming for the unit roundoff level of accuracy, the results are very sensitive to the setup. We explore two sets of precisions $u_r$ for computing the residuals and $u$ for other computations: $u_r$ set to quadruple (simulated via Advanpix) and $u$ set to double, and $u_r$ set to double and $u$ set to single. The units roundoff for these IEEE floating point arithmetics are $9.6 \times 10^{-35}$ for quadruple (fp128), $1.1 \times 10^{-16}$ for double (fp64), and $6.0 \times 10^{-8}$ for single (fp32).

We declare that LSIR converged if \[  \Vert x_i - x^* \Vert / \Vert x^* \Vert \leq \tau.  \]
We set $\tau = 8 u$, where $u$ is the unit roundoff for double or single precision. If this condition is satisfied with $x_0$, then LSIR is not performed. LSIR is run for a maximum of 30 iterations. 

\subsection{Solving via QR}
We compute the full QR decomposition in the working precision $u$ as $[\widetilde{Q},\widetilde{R}] = qr(A)$ and set $R = \widetilde{R}(1:n,1:n)$ and $Q_1 = \widetilde{Q}(:,1:n)$. The initial estimate is $x_0 = R \backslash (Q_1'*b)$, where $Q_1'$ denotes $Q_1^T$. We show when the condition for recognizing the true solution in the LS approach is satisfied in Figure~\ref{fig:recog_true_sol_criteria_LS_approach}. 
We use condition \eqref{eq:condition_x_convergence_simplified} to estimate for which instances the augmented system LSIR approach is expected to converge; see Figure~\ref{fig:x_convergence_condition_augmented}. Note that we expect the semi-normal equations approach to be insensitive to $\Vert r \Vert_2$ and converge for problems with $\kappa(A) < u^{-1/2}$.

\begin{figure}
    \centering
 \includegraphics[width=0.45\linewidth]{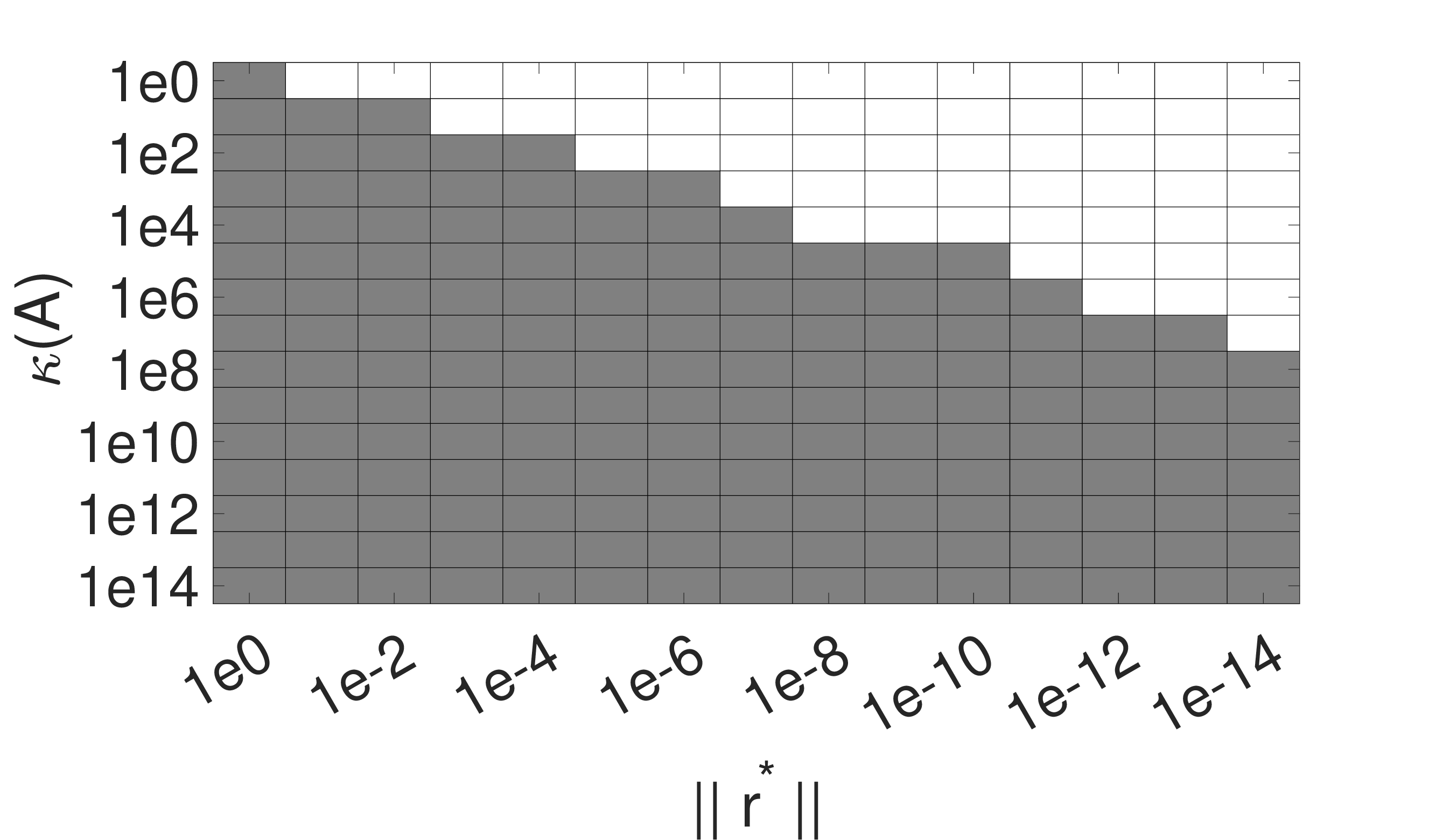}
    \caption{LS system approach. Criteria for the least-squares system approach to recognize a true solution if it is found \eqref{eq:gol_wilk_condition} for a set of LS problems with varying $\kappa(A)$ and $\Vert r^* \Vert$. The white cells indicate that \eqref{eq:gol_wilk_condition} is satisfied.}
    \label{fig:recog_true_sol_criteria_LS_approach}
\end{figure}

\begin{figure}
    \centering
\begin{subfigure}[t]{0.45\linewidth}
  \centering
 \includegraphics[width=\linewidth]{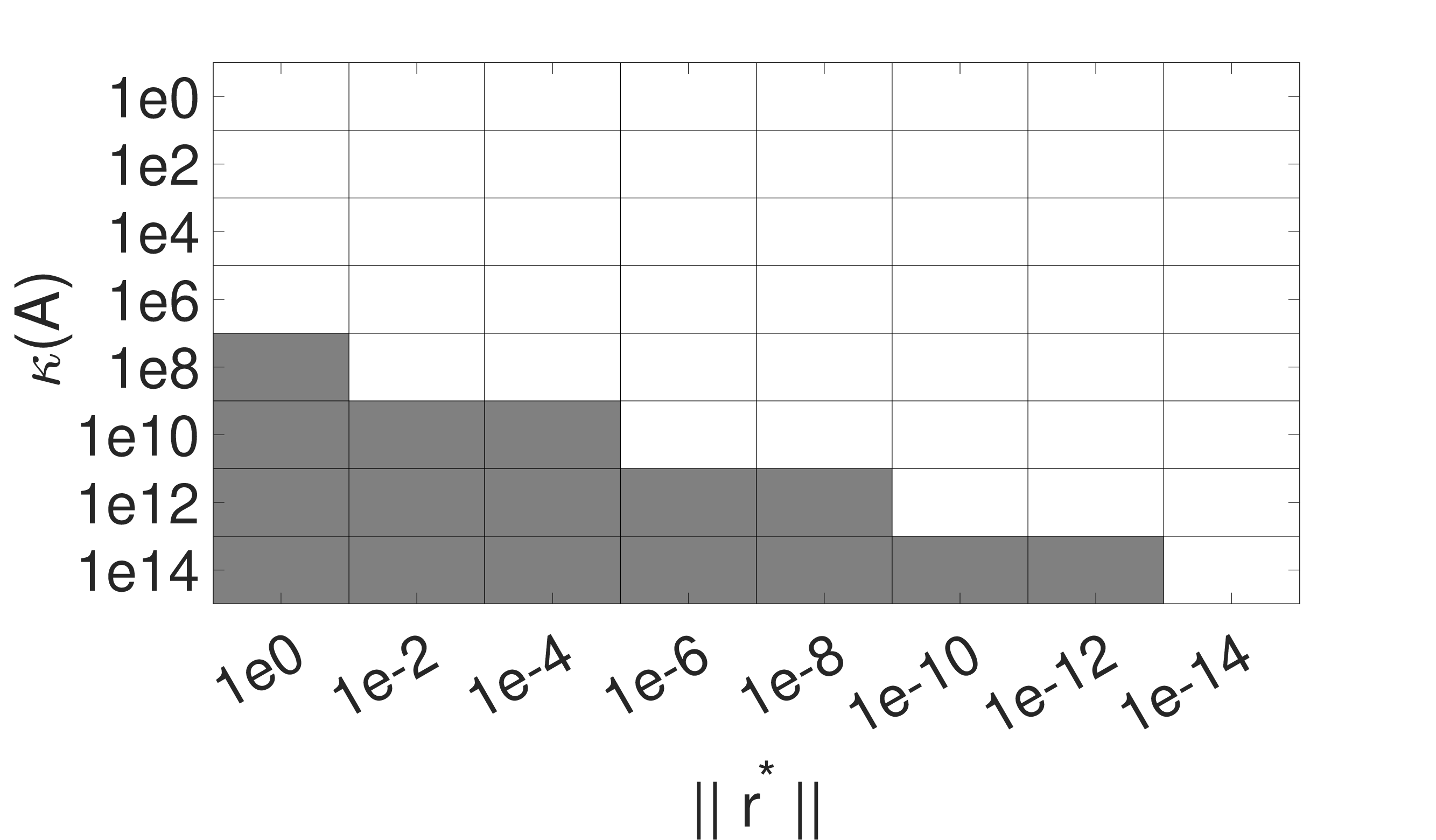}
  \caption{$u_r$ quad, $u$ double}
\end{subfigure}
\begin{subfigure}[t]{0.45\linewidth}
  \centering
 \includegraphics[width=\linewidth]{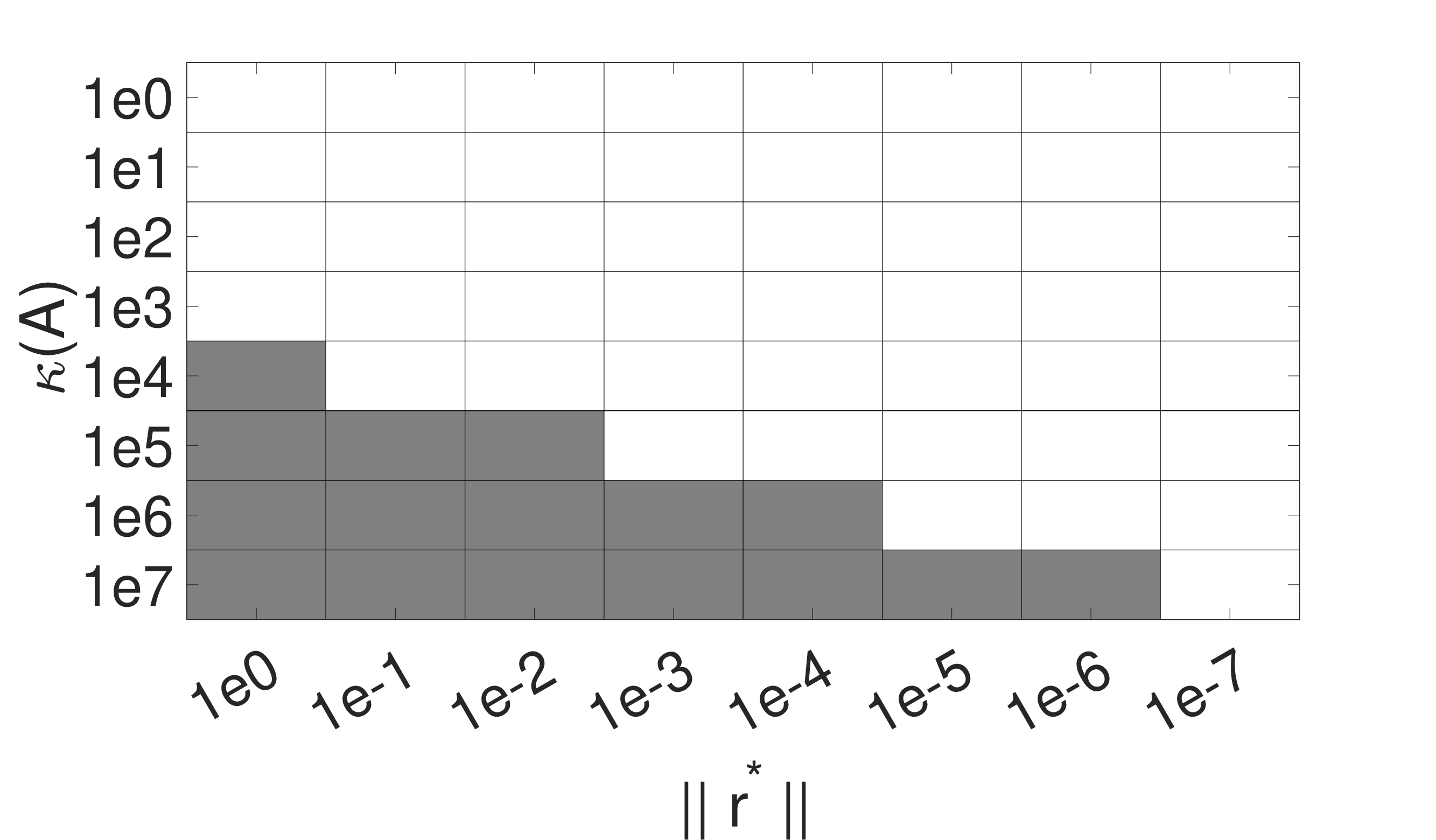}
  \caption{$u_r$ double, $u$ single}
\end{subfigure}
    \caption{Augmented system approach. LSIR convergence criteria \eqref{eq:condition_x_convergence_simplified} for a set of LS problems with varying $\kappa(A)$ and $\Vert r^* \Vert$ for two sets of precisions $u_r$ and $u$. The white cells indicate that \eqref{eq:condition_x_convergence_simplified} is satisfied.}
    \label{fig:x_convergence_condition_augmented}
\end{figure}

In Figures~\ref{fig:ir_iterations}, \ref{fig:x_error}, and \ref{fig:r_error}, we show the iteration count, and relative errors  $\Vert x_i - x^* \Vert / \Vert x^* \Vert  $  and $\Vert r_i - r^* \Vert / \Vert r^* \Vert $, respectively, for the LS, semi-normal equations, and augmented system approaches.
The augmented system approach converges in all the cases where the criteria \eqref{eq:condition_x_convergence_simplified} is satisfied (as shown in Figure~\ref{fig:x_convergence_condition_augmented}) and even for more ill-conditioned systems with larger residuals. As suggested by the theoretical analysis, the semi-normal equations approach converges for all problems when $\kappa(A)<u^{-1/2}$ (and even for some worse conditioned ones) and is not sensitive to the size of the residual. 

Comparing all three approaches, the augmented system approach is able to refine the solution $x$ for both more ill-conditioned problems and problems with larger true residual. The semi-normal equations approach outperforms the LS approach but not the augmented system approach. Note that in most cases very few LSIR iterations are needed; in general a larger number of iterations is needed when $\kappa(A)$ grows so large that the theoretical results cannot guarantee convergence. Convergence plots in Figures~\ref{fig:x_conv_qr} and \ref{fig:r_conv_qr} show that for large $\Vert r^* \Vert_2$ the semi-normal approach performs similar to the augmented system approach and for small $\Vert r^* \Vert_2$ the semi-normal approach gives results similar to the LS approach.

Our stopping criteria for LSIR is based on the error in $x$ only and we thus may stop the algorithm too early to refine $r$ to the attainable level. However, in the cases where the augmented system approach is unable 
to refine the solution $x$, the error in the relative residual reaches the $\mathcal{O}(u)$ level. The LS system approach mostly fails to refine the residual to such accuracy even when $\Vert x_i - x^* \Vert / \Vert x^* \Vert  $ reaches $\mathcal{O}(u)$. The semi-normal equations approach refines the residual for problems with small $\Vert r^* \Vert$. We can clearly observe that $r$ is harder to refine with all the methods when $\Vert r^* \Vert$ decreases. 

We have also performed numerical experiments where $n \ll m$ does \emph{not} hold, namely, we set $n=900$. The results are similar to the case reported above and agree with the theory, and we thus do not show them here. The augmented approach still outperforms what is expected from Figure~\ref{fig:x_convergence_condition_augmented}, though it appears to be slightly more sensitive to the residual size than when $n \ll m$. The latter point holds for the LS approach too. The semi-normal equations approach converges for $\kappa_2(A)<u^{-1/2}$ only.

\begin{figure}
    \centering
\begin{subfigure}[t]{0.45\linewidth}
  \centering
 \includegraphics[width=\linewidth]{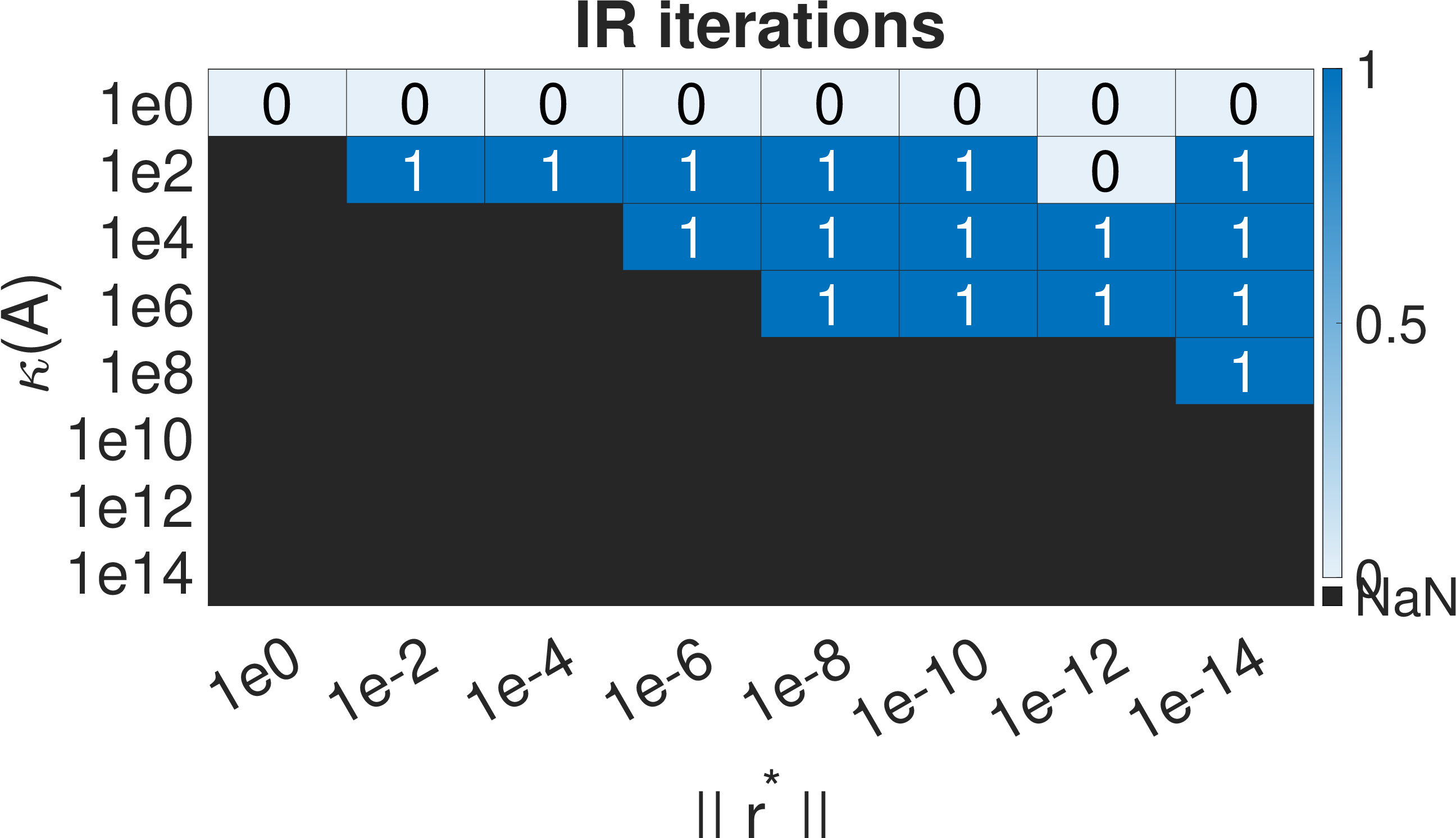}
  \caption{LS approach, (quad, double)}
\end{subfigure}
\begin{subfigure}[t]{0.45\linewidth}
  \centering
 \includegraphics[width=\linewidth]{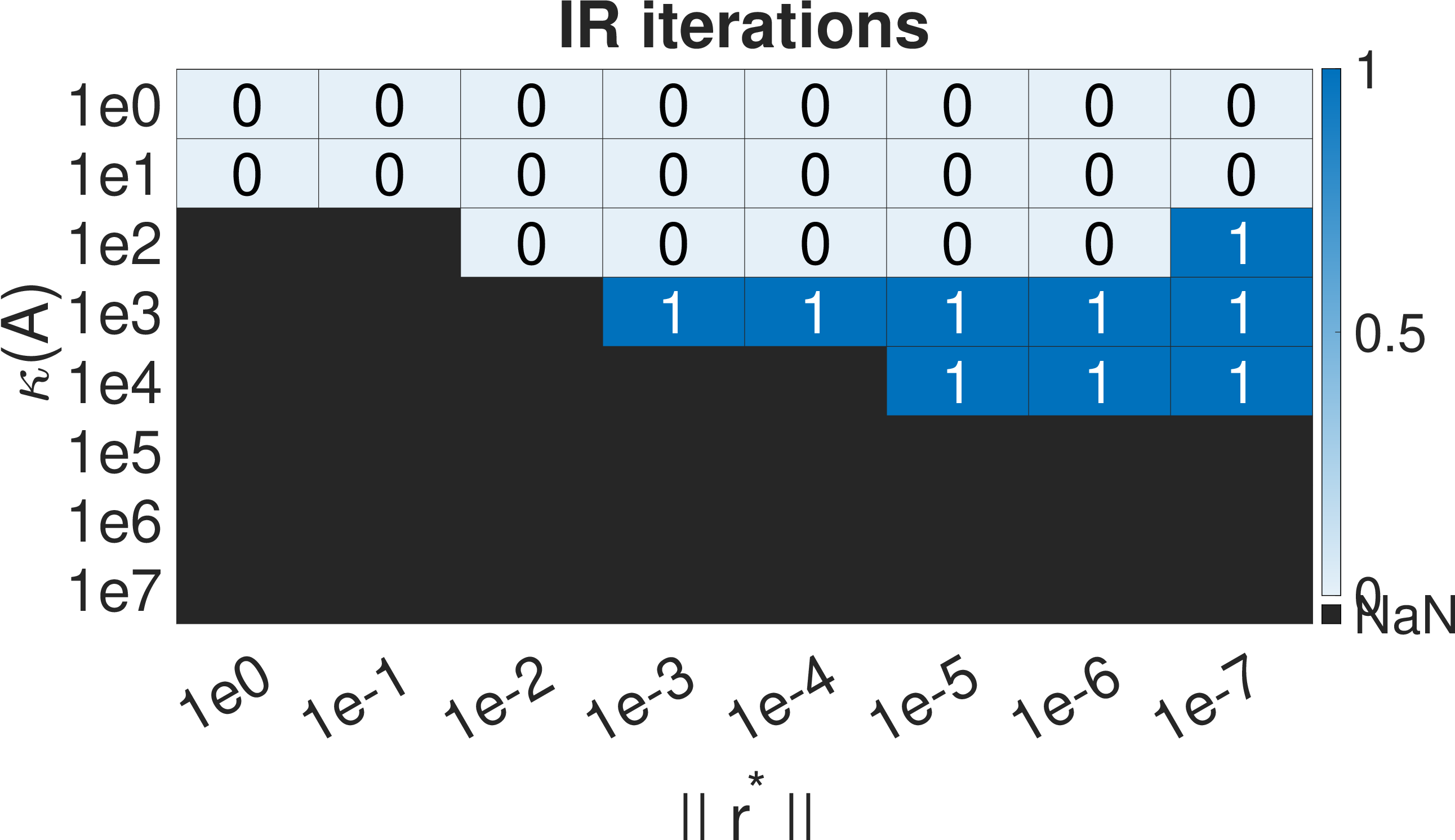}
  \caption{LS approach, (double, single)}
\end{subfigure}
\begin{subfigure}[t]{0.45\linewidth}
  \centering
 \includegraphics[width=\linewidth]{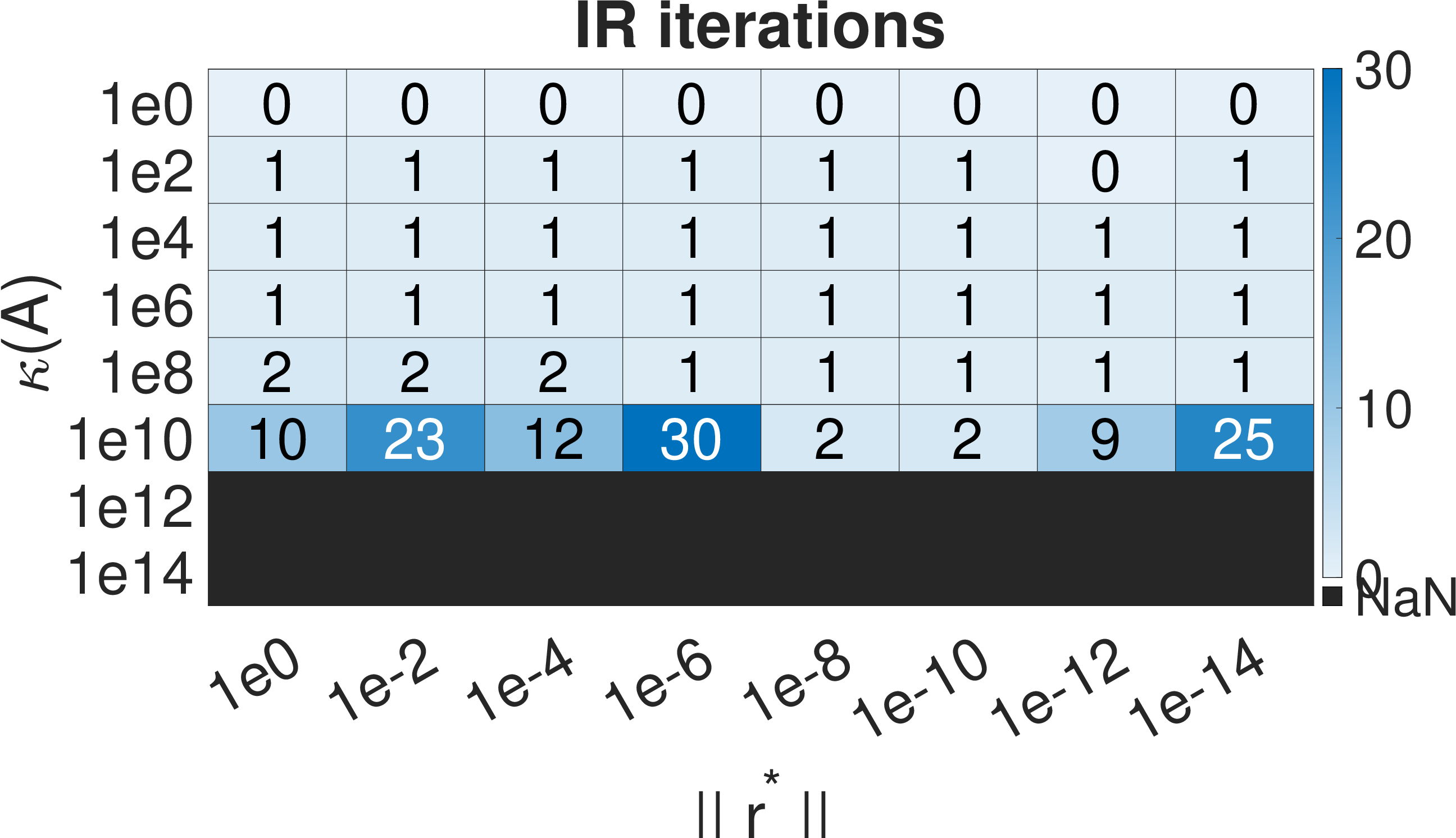}
  \caption{semi-normal, (quad, double)}
\end{subfigure}
\begin{subfigure}[t]{0.45\linewidth}
  \centering
 \includegraphics[width=\linewidth]{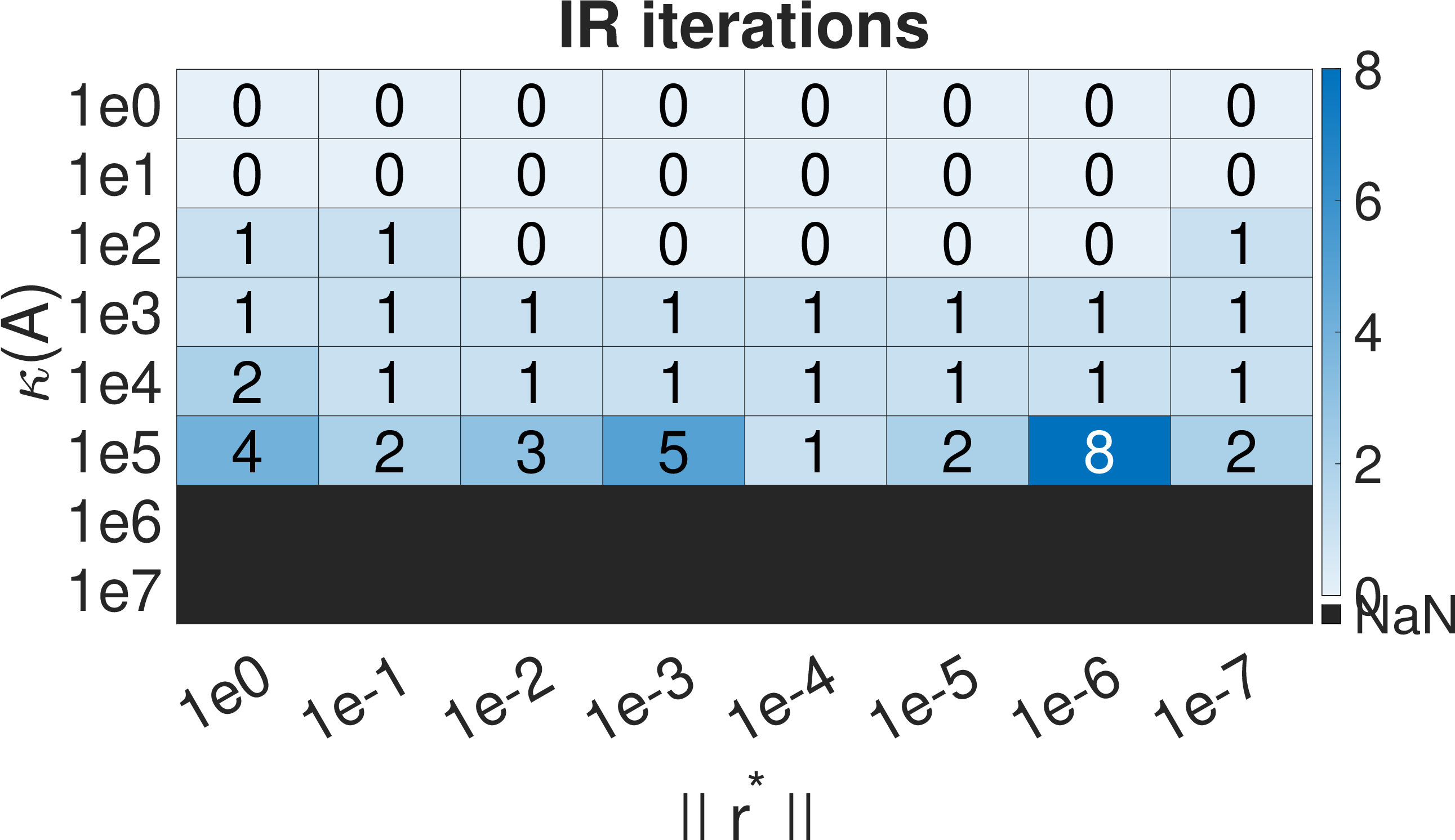}
  \caption{semi-normal, (double, single)}
\end{subfigure}
\begin{subfigure}[t]{0.45\linewidth}
  \centering
 \includegraphics[width=\linewidth]{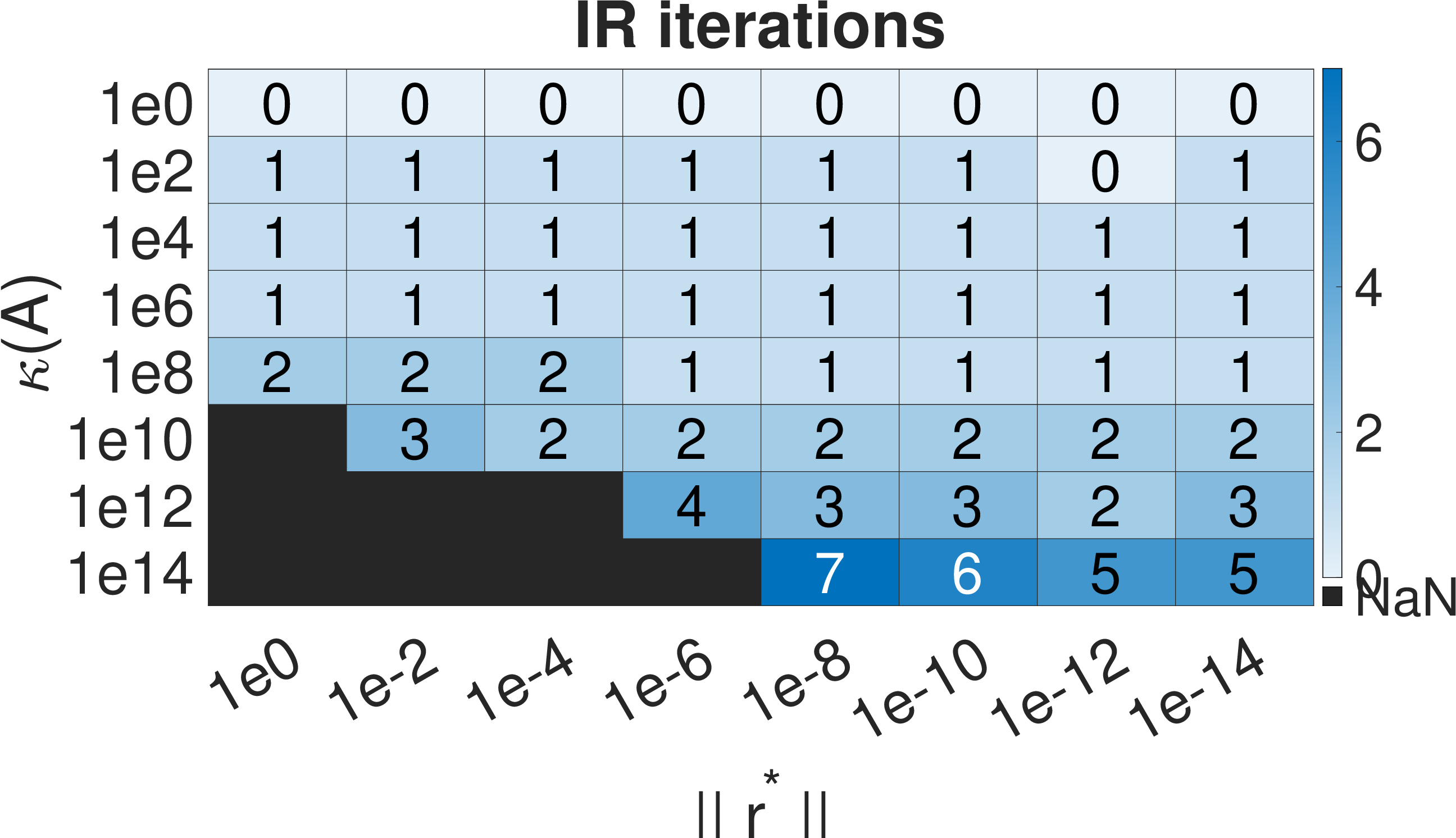}
  \caption{augmented approach, (quad, double)}
\end{subfigure}
\begin{subfigure}[t]{0.45\linewidth}
  \centering
 \includegraphics[width=\linewidth]{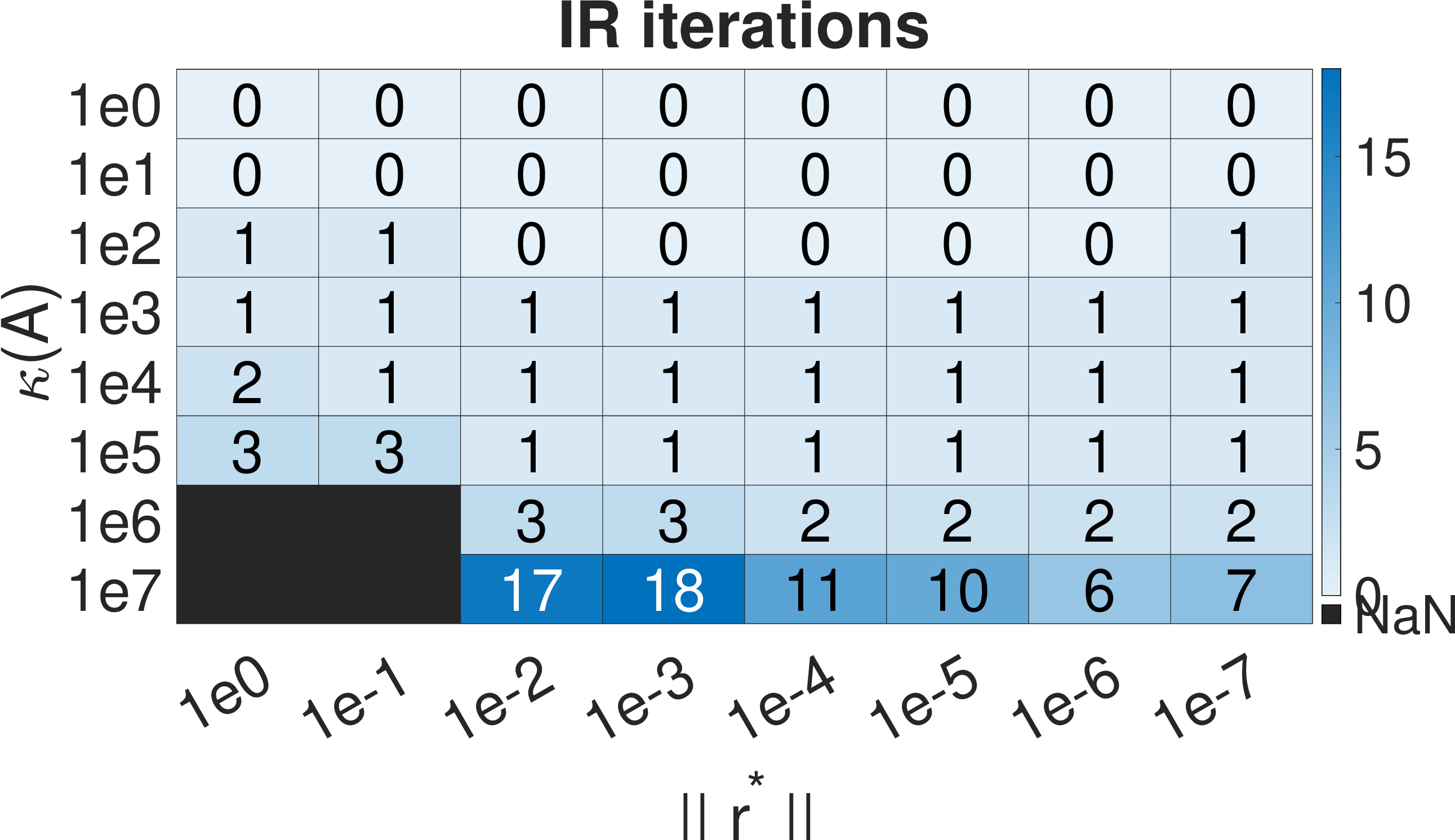}
  \caption{augmented approach, (double, single)}
\end{subfigure}
    \caption{Count of iterative refinement iterations for the LS, semi-normal equations, and augmented system approach when $u_r$ is set to quadruple and $u$ is set to double (left panels), and $u_r$ is set to double and $u$ is set to single (right). Black denotes when LSIR does not converge in 30 iterations.}
    \label{fig:ir_iterations}
\end{figure}

\begin{figure}
    \centering
\begin{subfigure}[t]{0.45\linewidth}
  \centering
 \includegraphics[width=\linewidth]{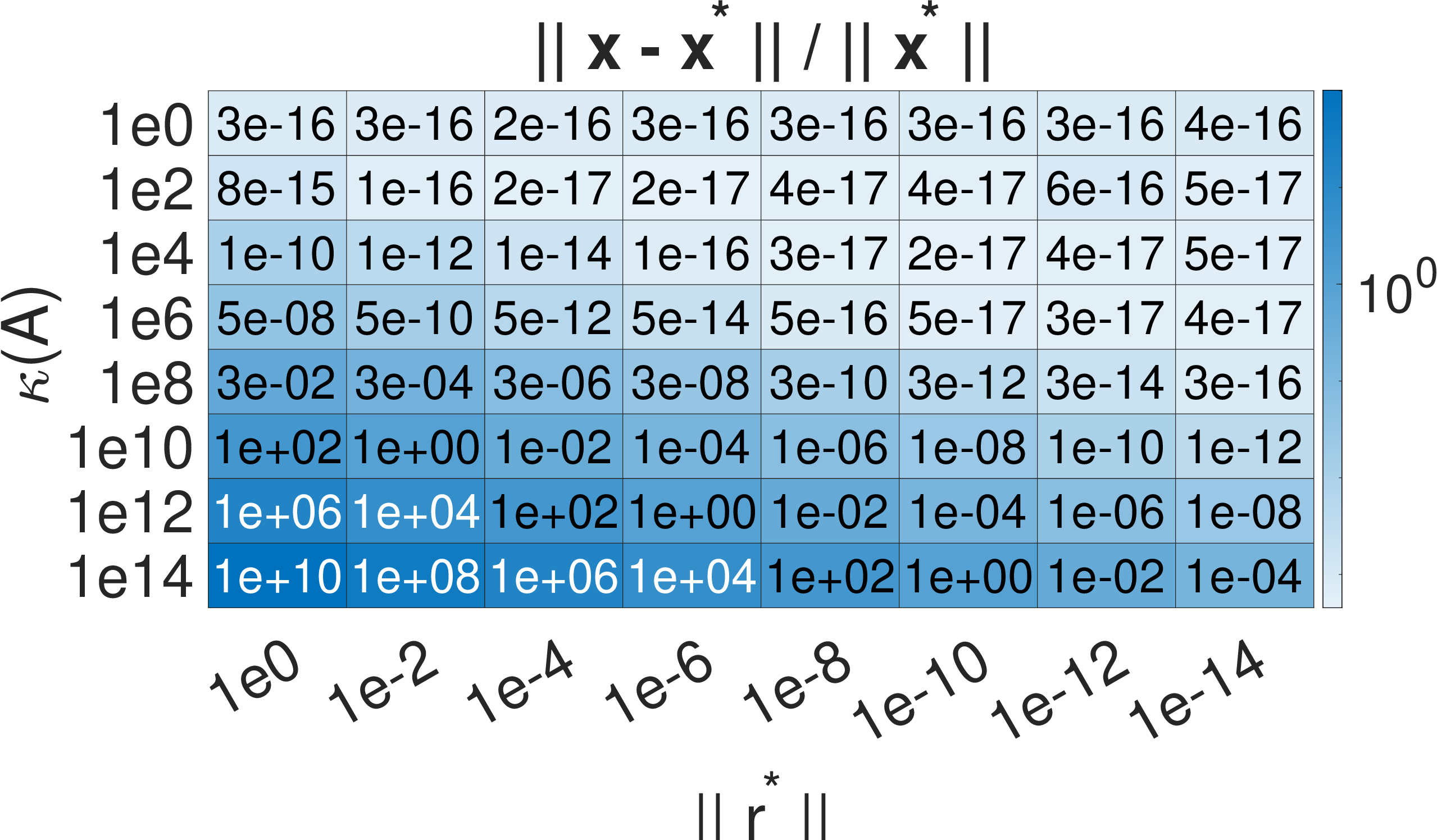}
  \caption{LS approach, (quad, double)}
\end{subfigure}
\begin{subfigure}[t]{0.45\linewidth}
  \centering
 \includegraphics[width=\linewidth]{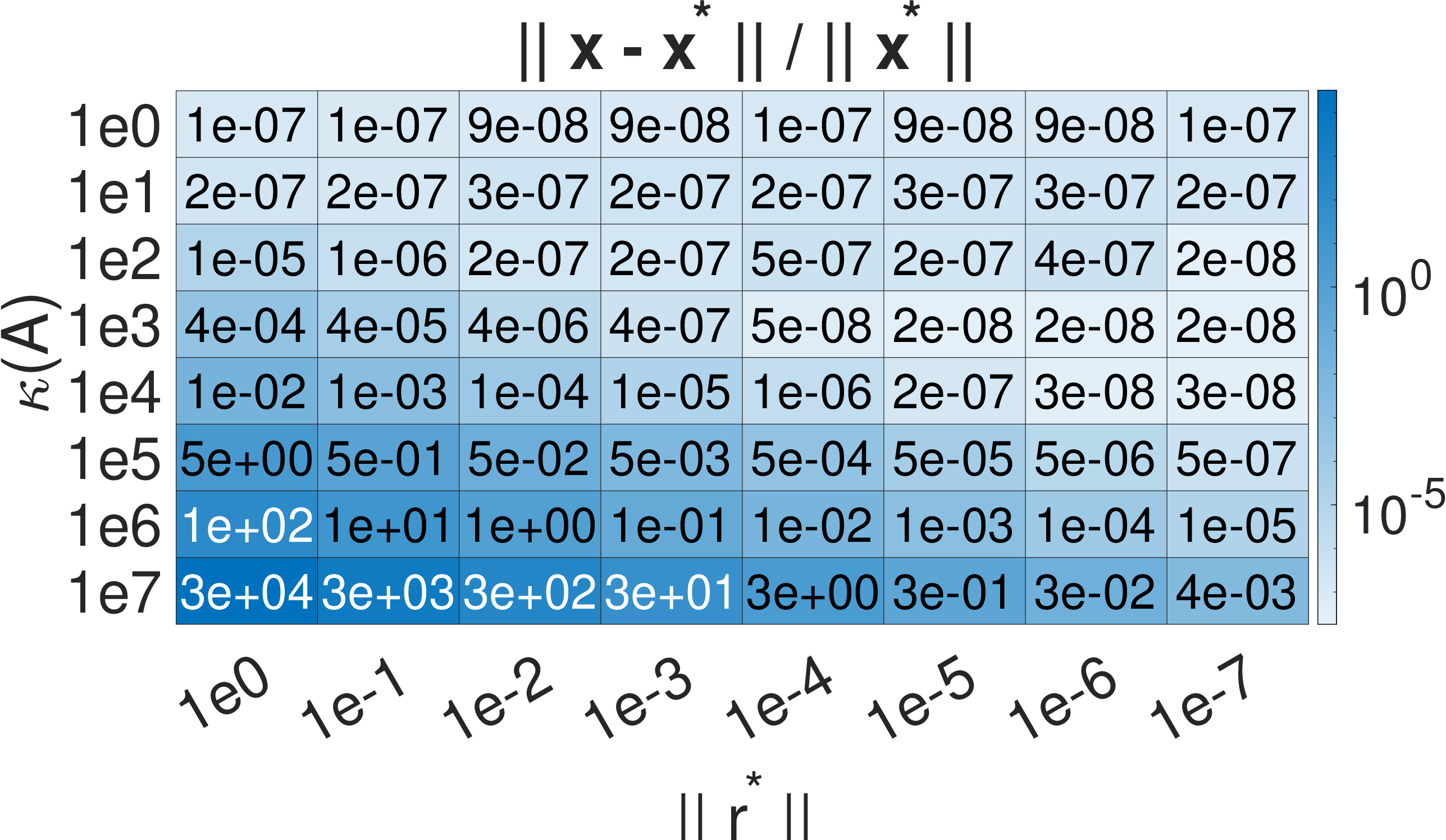}
  \caption{LS approach, (double, single)}
\end{subfigure}
\begin{subfigure}[t]{0.45\linewidth}
  \centering
 \includegraphics[width=\linewidth]{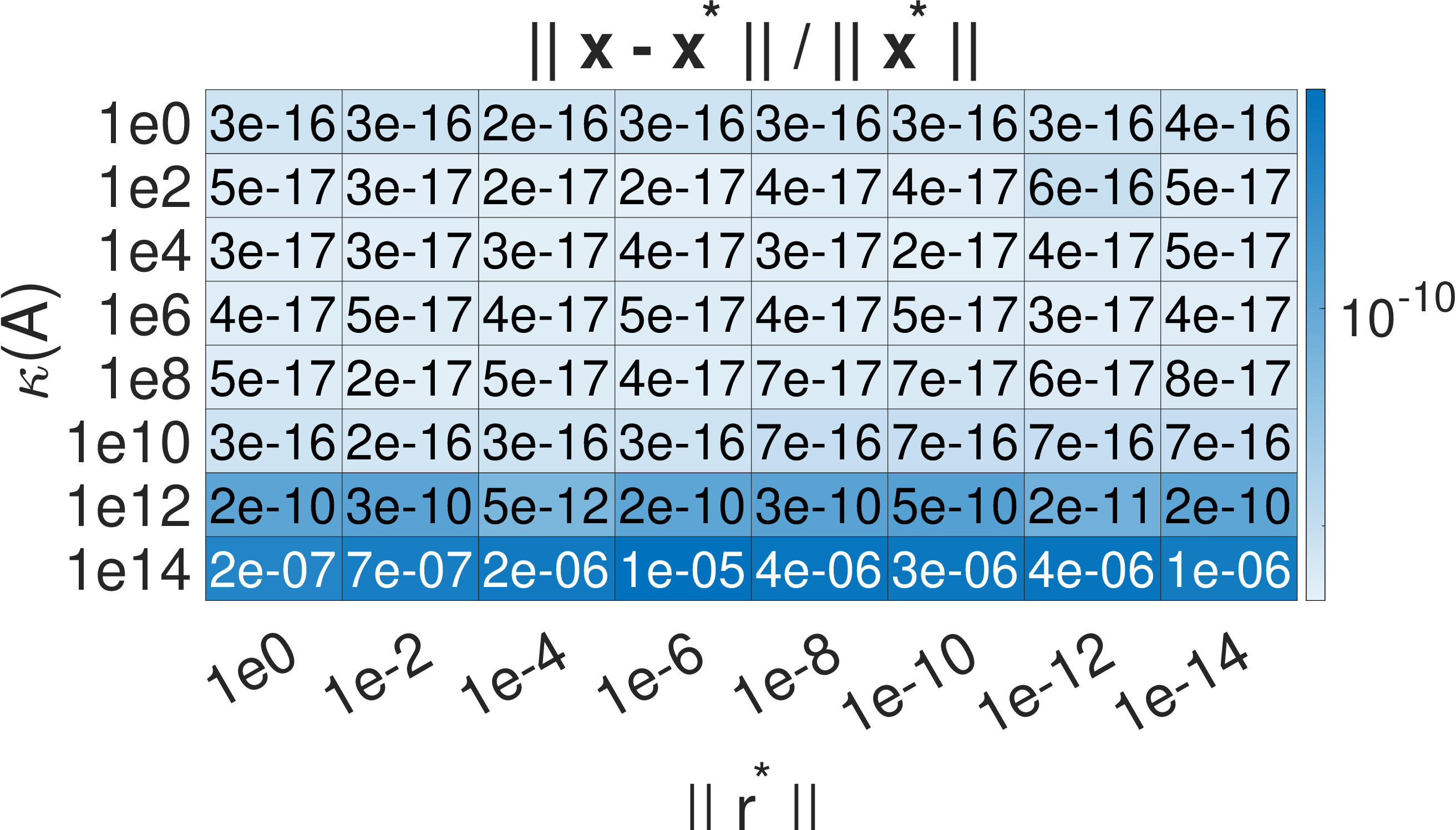}
  \caption{semi-normal, (quad, double)}
\end{subfigure}
\begin{subfigure}[t]{0.45\linewidth}
  \centering
 \includegraphics[width=\linewidth]{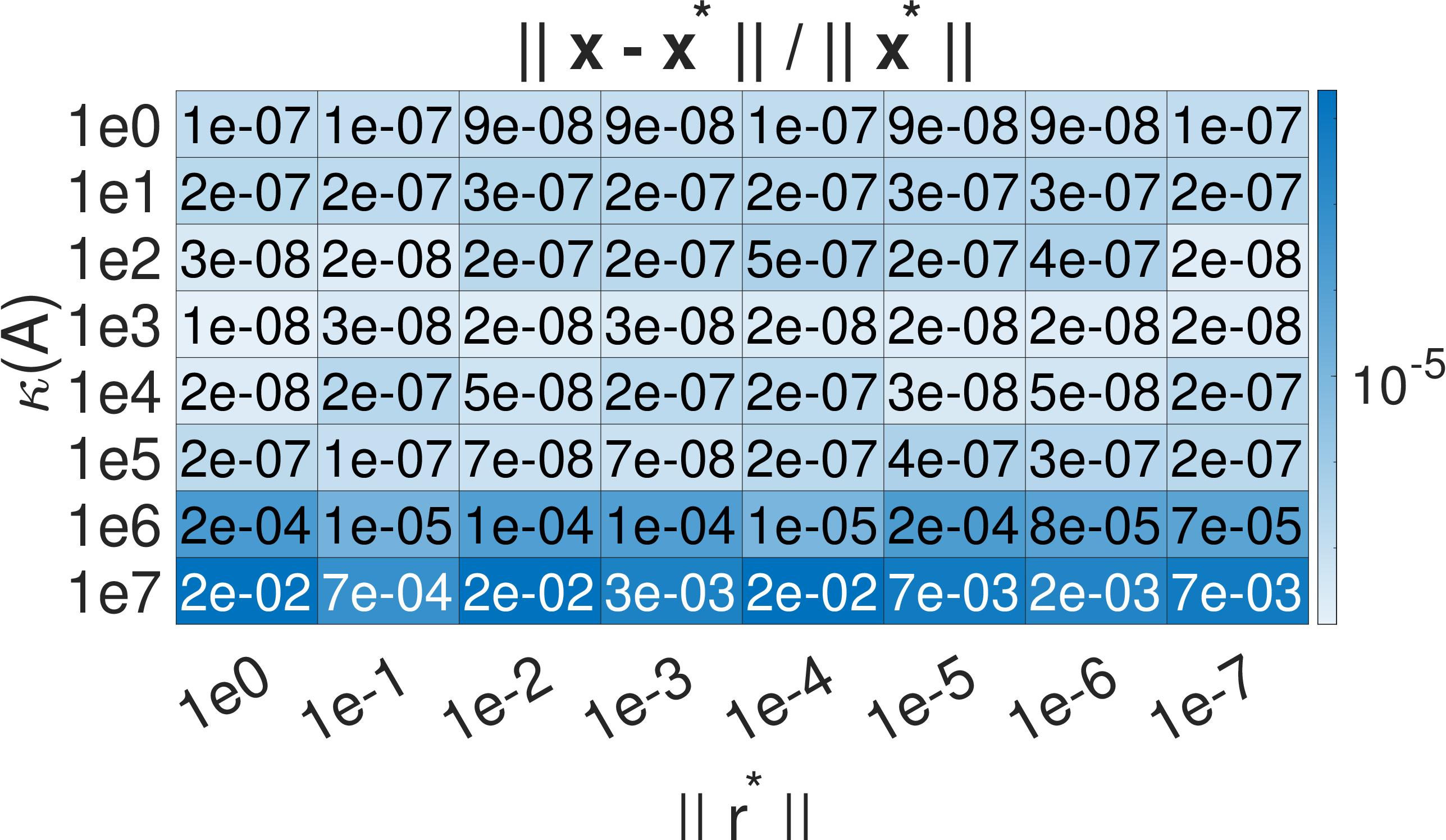}
  \caption{semi-normal, (double, single)}
\end{subfigure}
\begin{subfigure}[t]{0.45\linewidth}
  \centering
 \includegraphics[width=\linewidth]{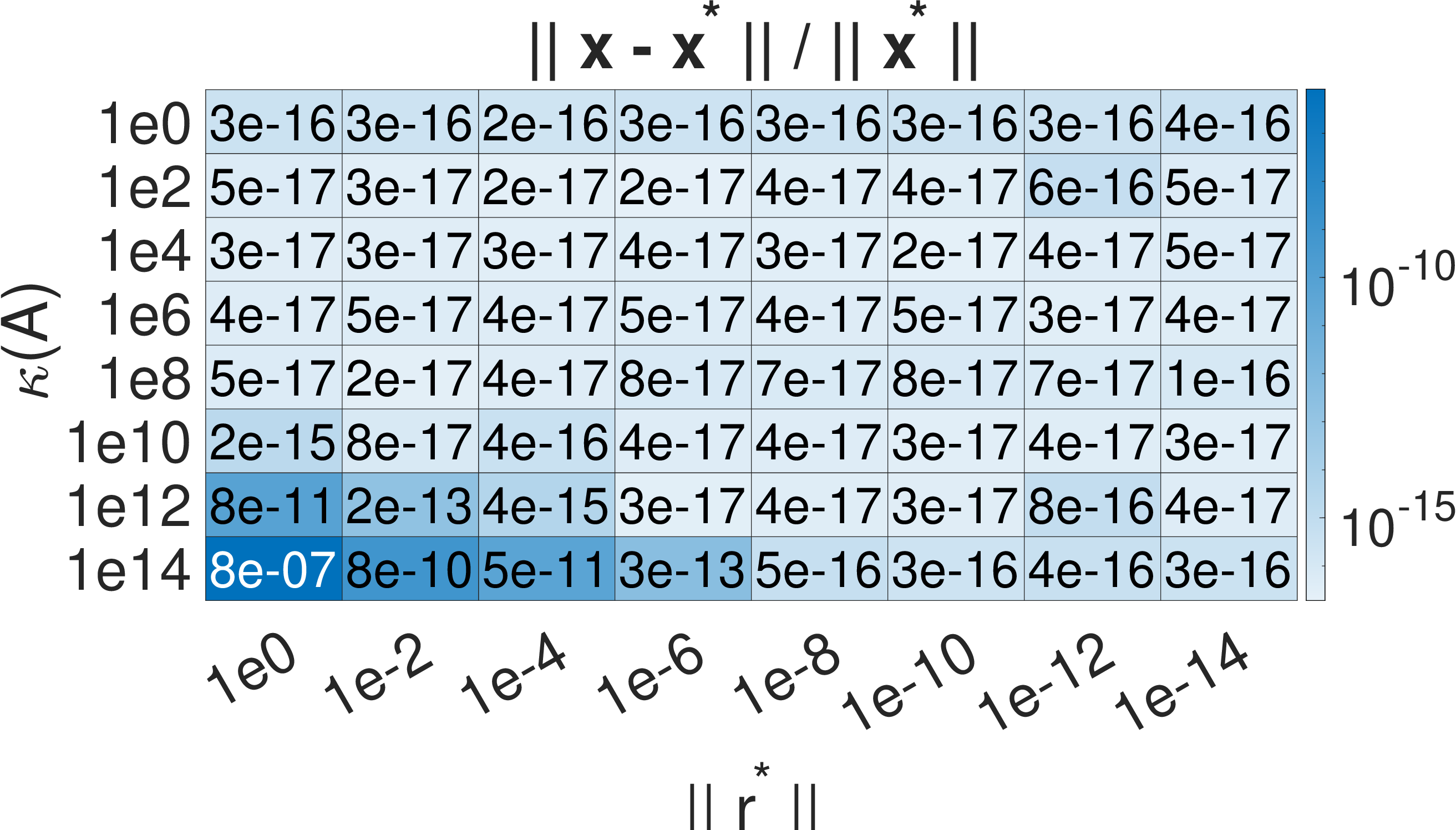}
  \caption{augmented approach, (quad, double)}
\end{subfigure}
\begin{subfigure}[t]{0.45\linewidth}
  \centering
 \includegraphics[width=\linewidth]{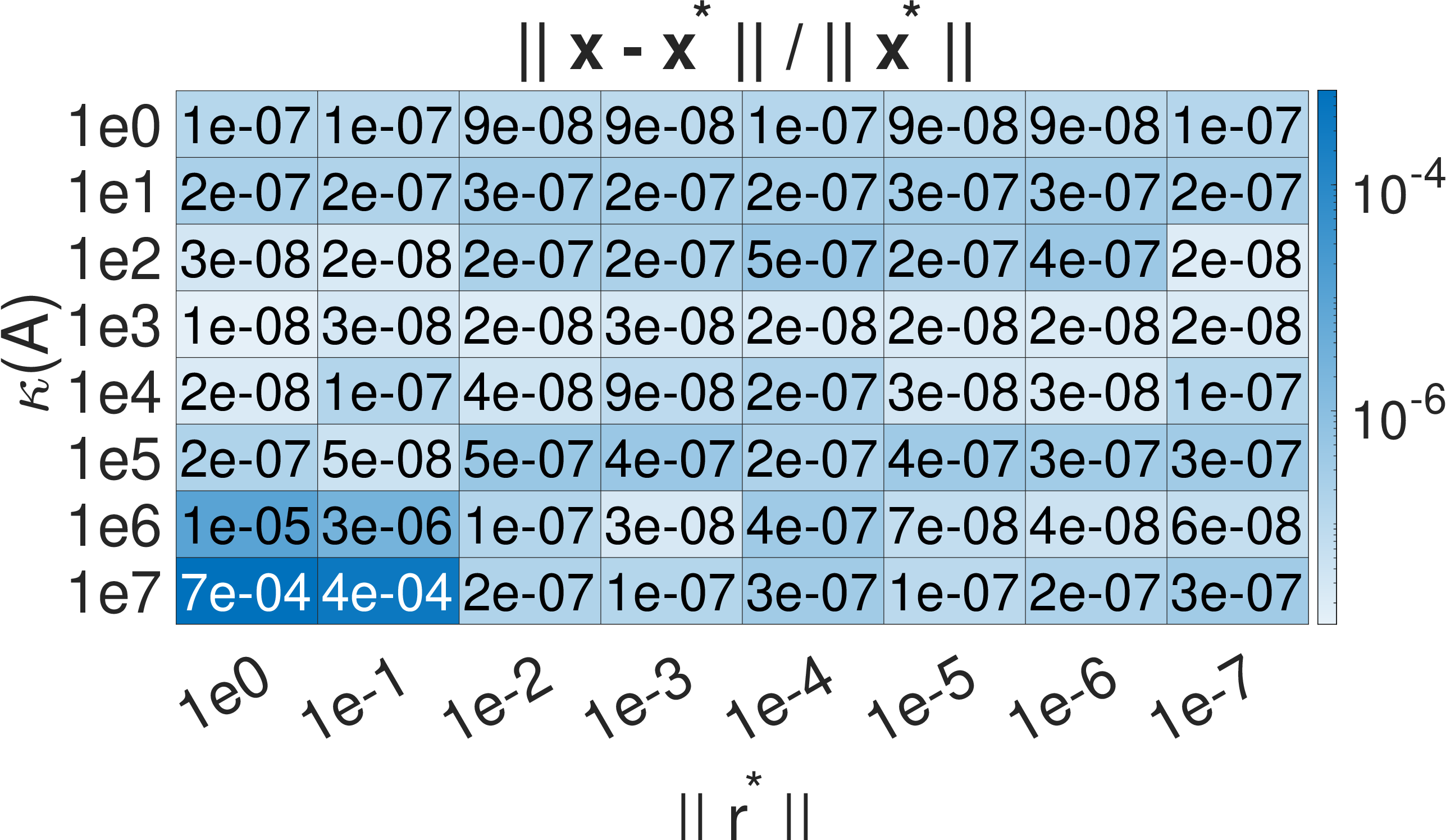}
  \caption{augmented approach, (double, single)}
\end{subfigure}
    \caption{As in Figure~\ref{fig:ir_iterations}, but for the relative error in $x$.}
    \label{fig:x_error}
\end{figure}

\begin{figure}
    \centering
\begin{subfigure}[t]{0.45\linewidth}
  \centering
 \includegraphics[width=\linewidth]{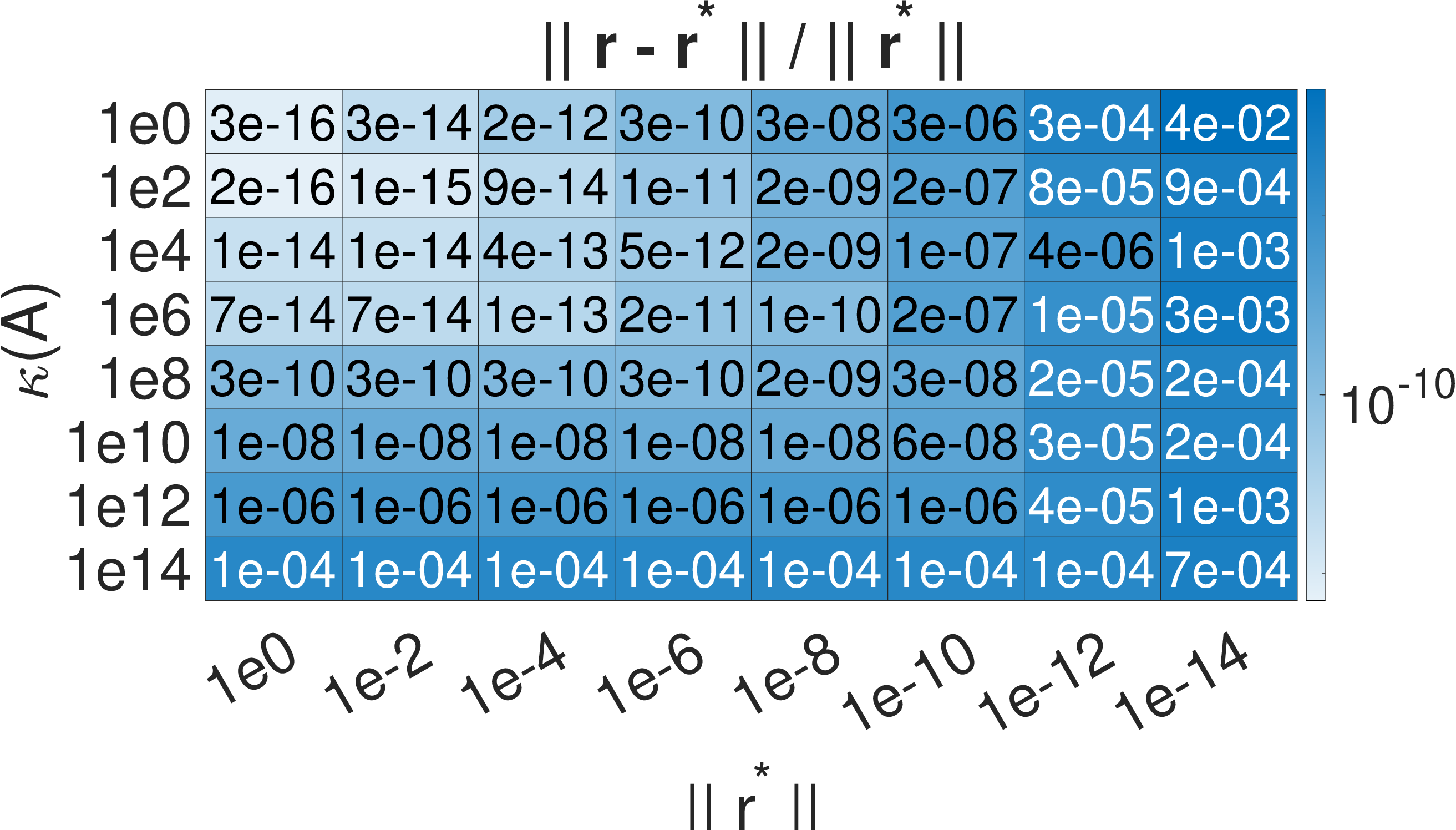}
  \caption{LS approach, (quad, double)}
\end{subfigure}
\begin{subfigure}[t]{0.45\linewidth}
  \centering
 \includegraphics[width=\linewidth]{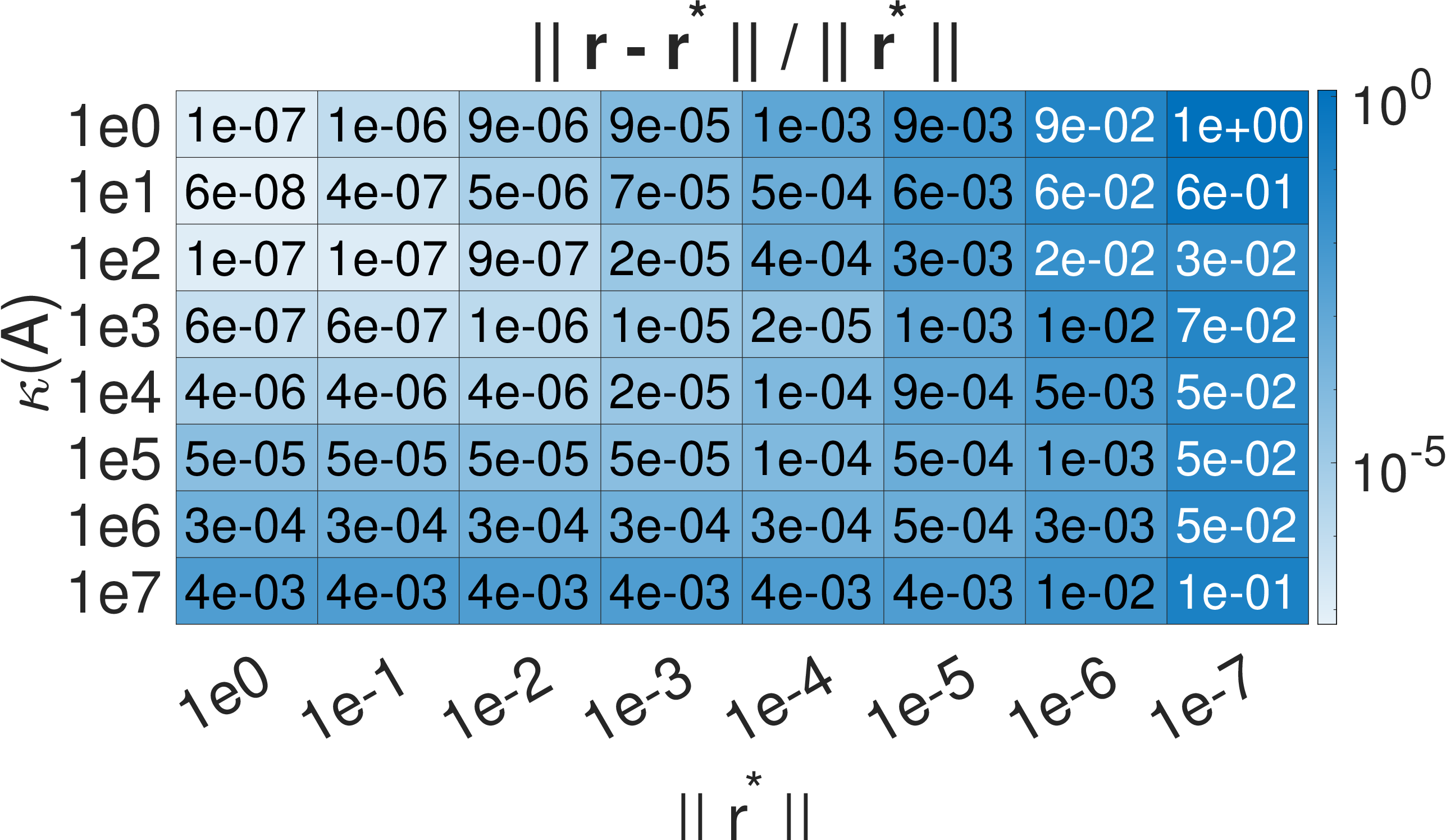}
  \caption{LS approach, (double, single)}
\end{subfigure}
\begin{subfigure}[t]{0.45\linewidth}
  \centering
 \includegraphics[width=\linewidth]{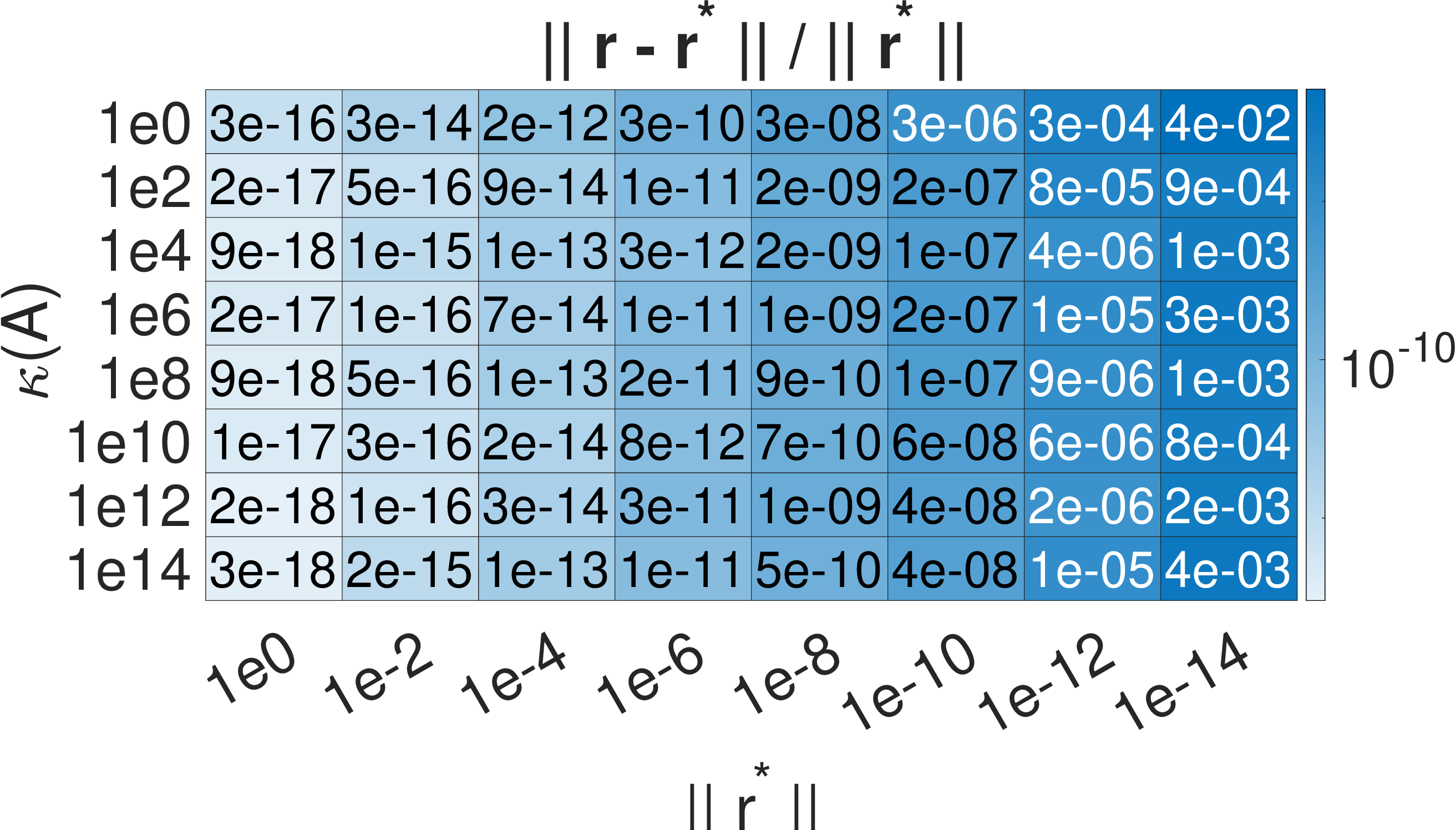}
  \caption{semi-normal, (quad, double)}
\end{subfigure}
\begin{subfigure}[t]{0.45\linewidth}
  \centering
 \includegraphics[width=\linewidth]{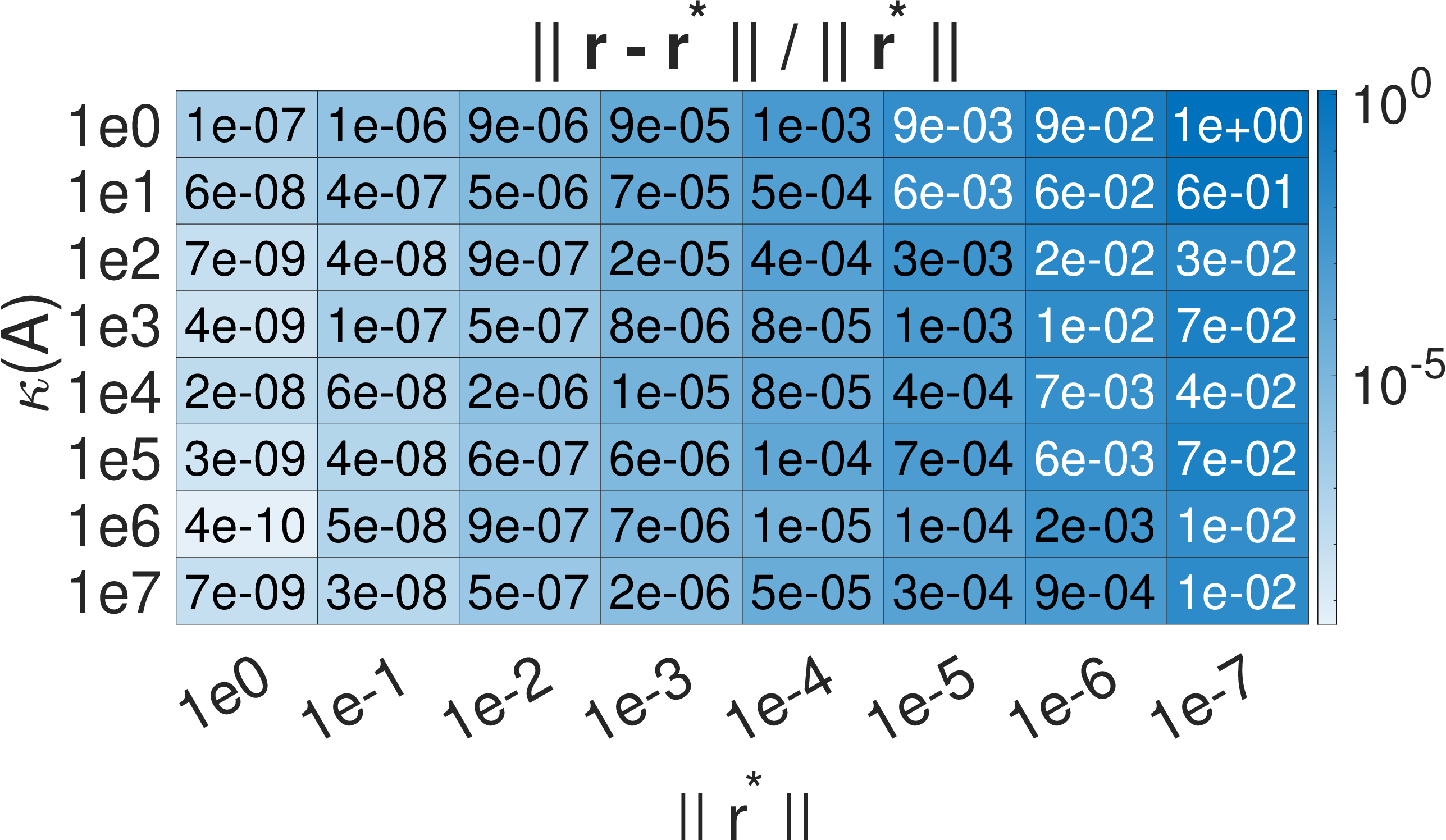}
  \caption{semi-normal, (double, single)}
\end{subfigure}
\begin{subfigure}[t]{0.45\linewidth}
  \centering
 \includegraphics[width=\linewidth]{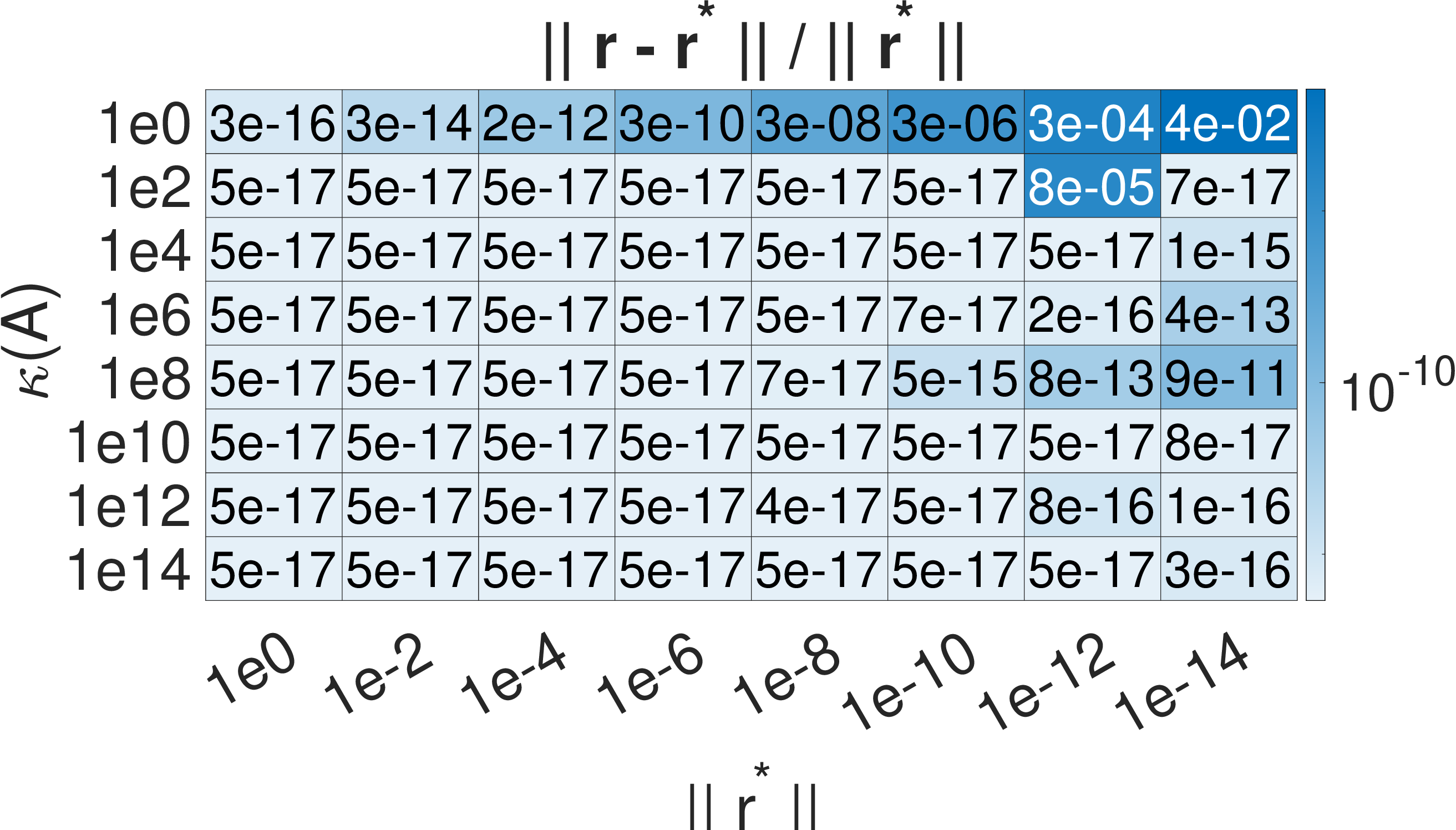}
  \caption{augmented approach, (quad, double)}
\end{subfigure}
\begin{subfigure}[t]{0.45\linewidth}
  \centering
 \includegraphics[width=\linewidth]{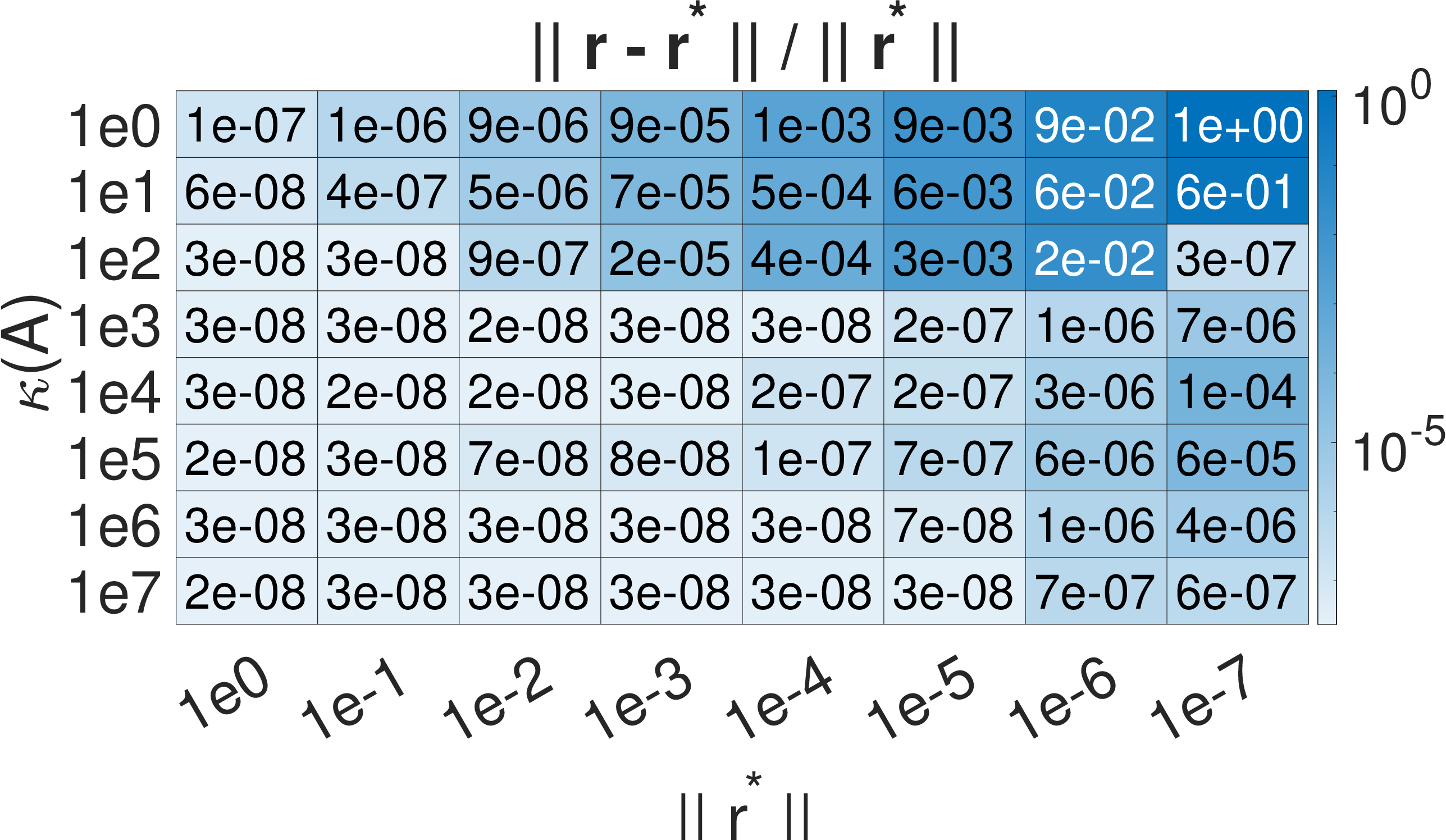}
  \caption{augmented approach, (double, single)}
\end{subfigure}
    \caption{As in Figure~\ref{fig:ir_iterations}, but for the relative error in $r$.}
    \label{fig:r_error}
\end{figure}

\begin{figure}
    \centering
\begin{subfigure}[t]{0.45\linewidth}
  \centering
 \includegraphics[width=\linewidth]{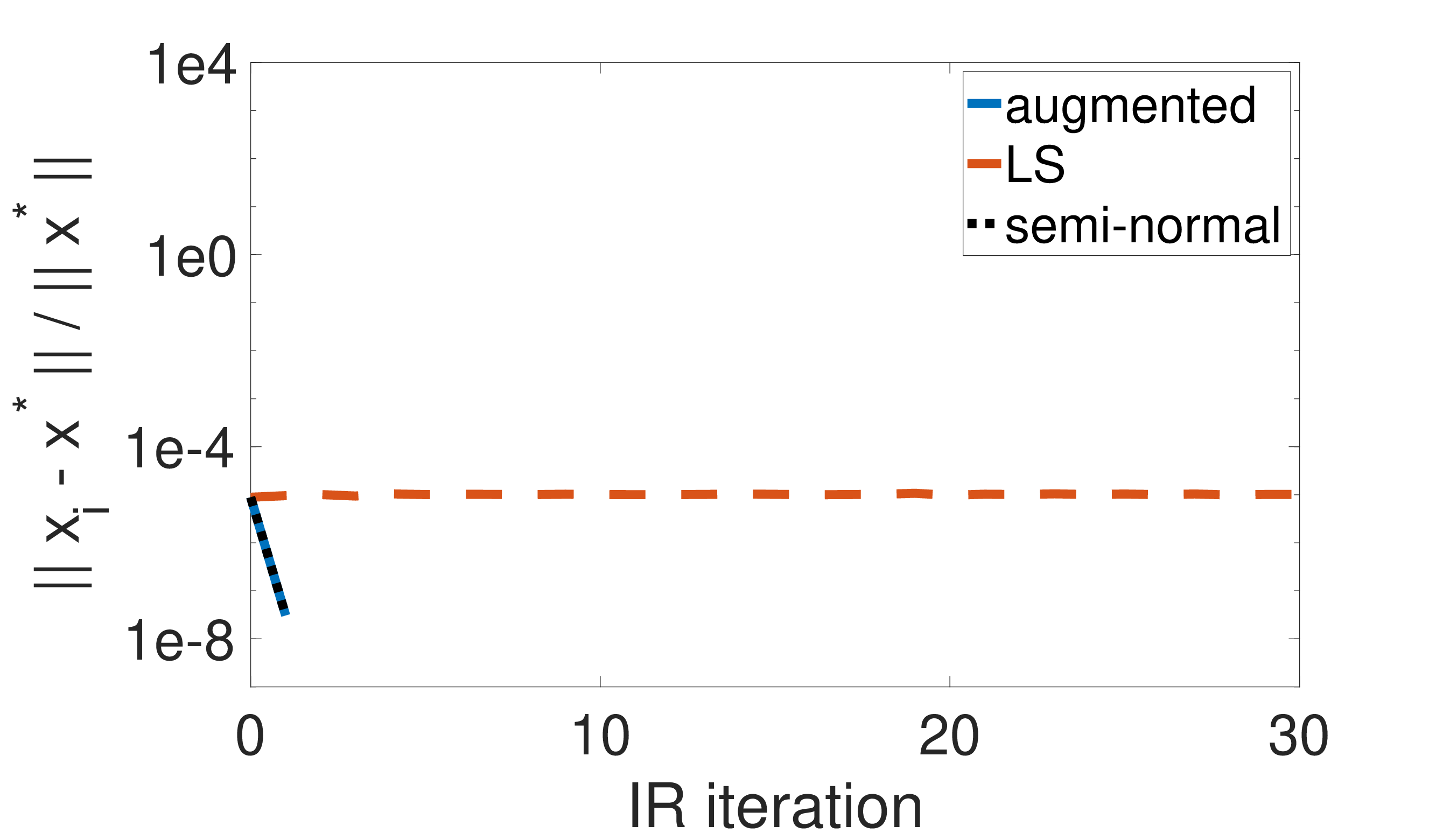}
  \caption{$\kappa_2(A)=$1e2, $\Vert r^* \Vert_2=$1e0 }
\end{subfigure}
\begin{subfigure}[t]{0.45\linewidth}
  \centering
 \includegraphics[width=\linewidth]{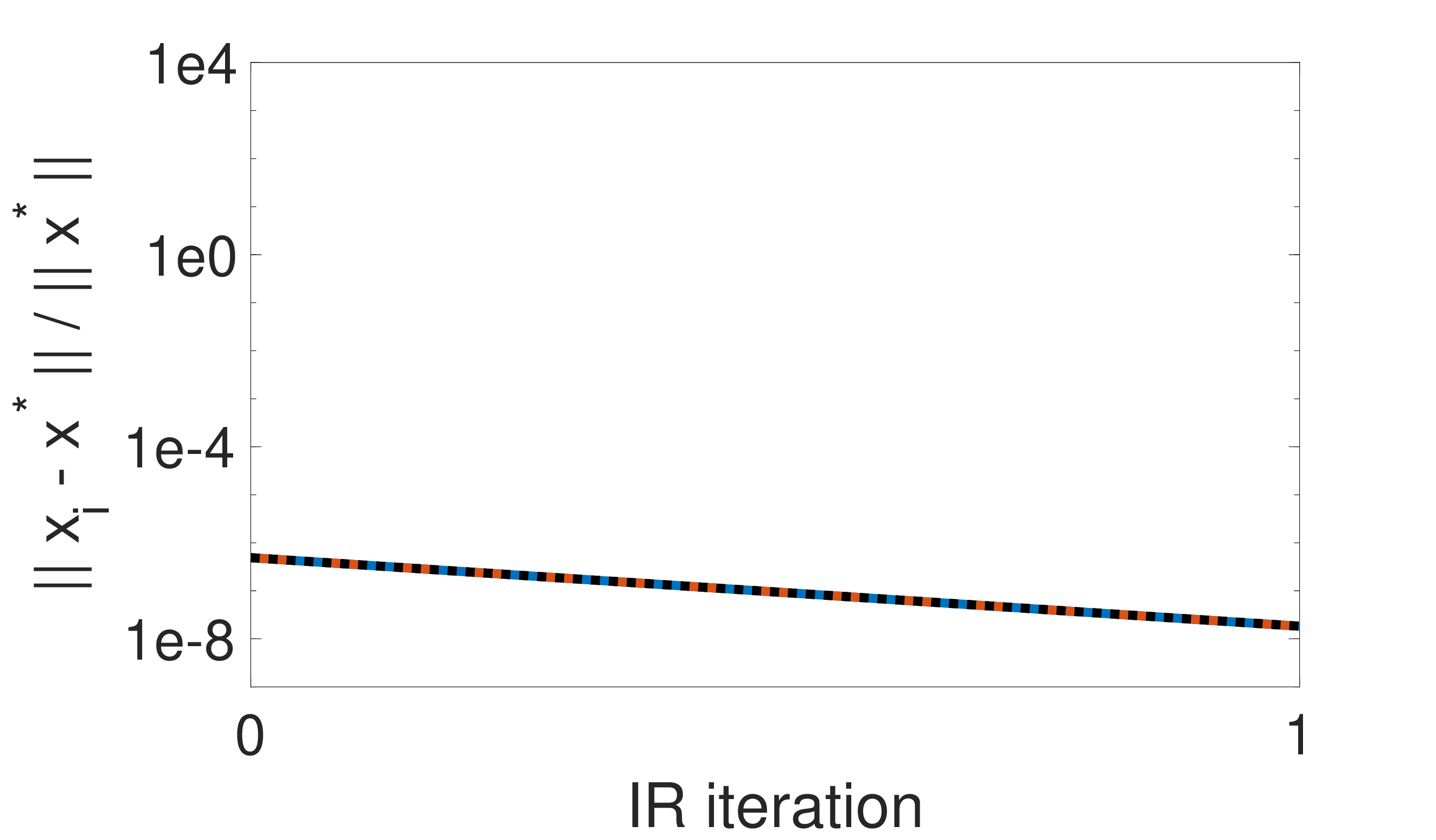}
  \caption{$\kappa_2(A)=$1e2, $\Vert r^* \Vert_2=$1e-7}
\end{subfigure}
\begin{subfigure}[t]{0.45\linewidth}
  \centering
 \includegraphics[width=\linewidth]{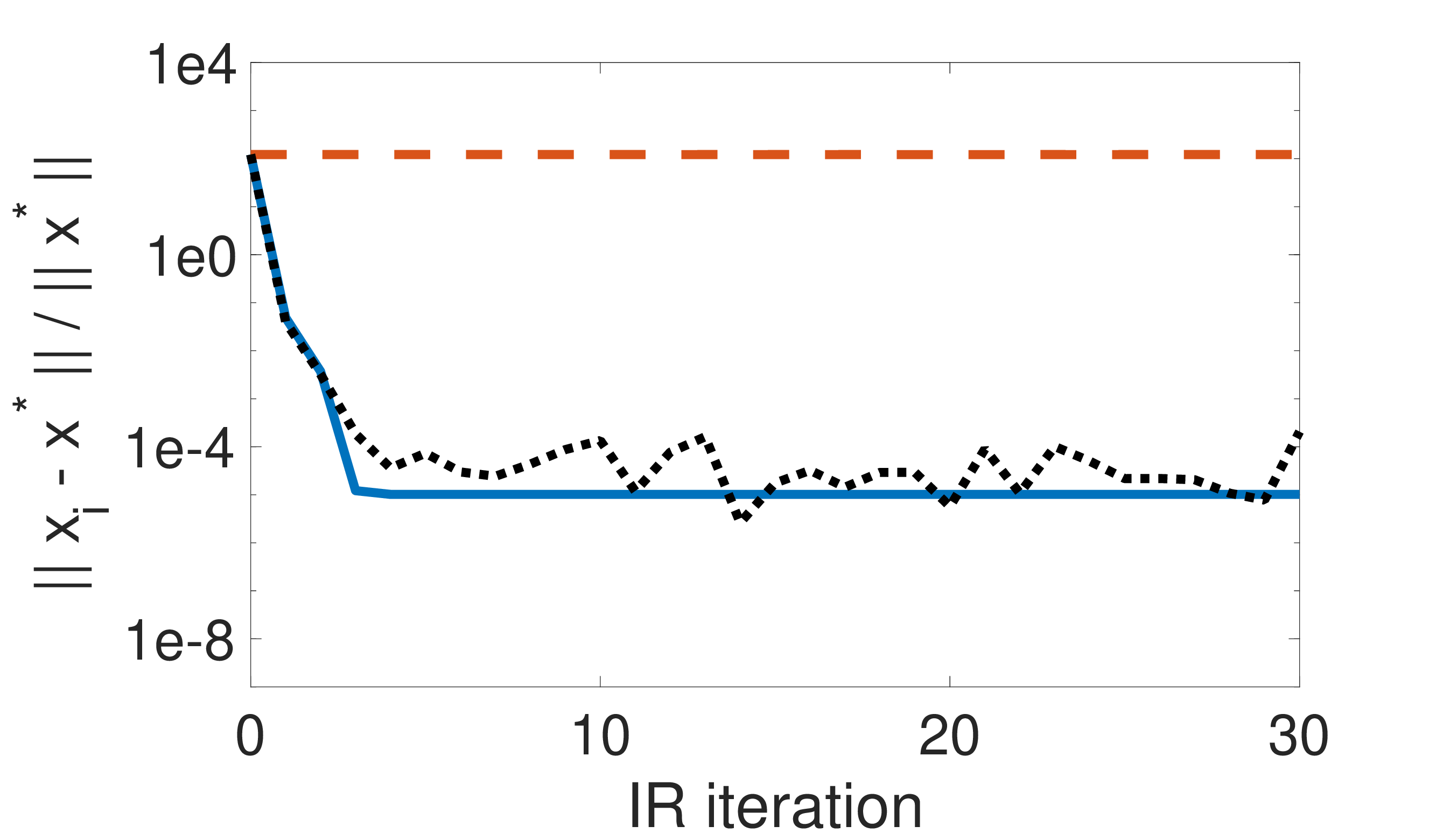}
  \caption{$\kappa_2(A)=$1e6, $\Vert r^* \Vert_2=$1e0 }
\end{subfigure}
\begin{subfigure}[t]{0.45\linewidth}
  \centering
 \includegraphics[width=\linewidth]{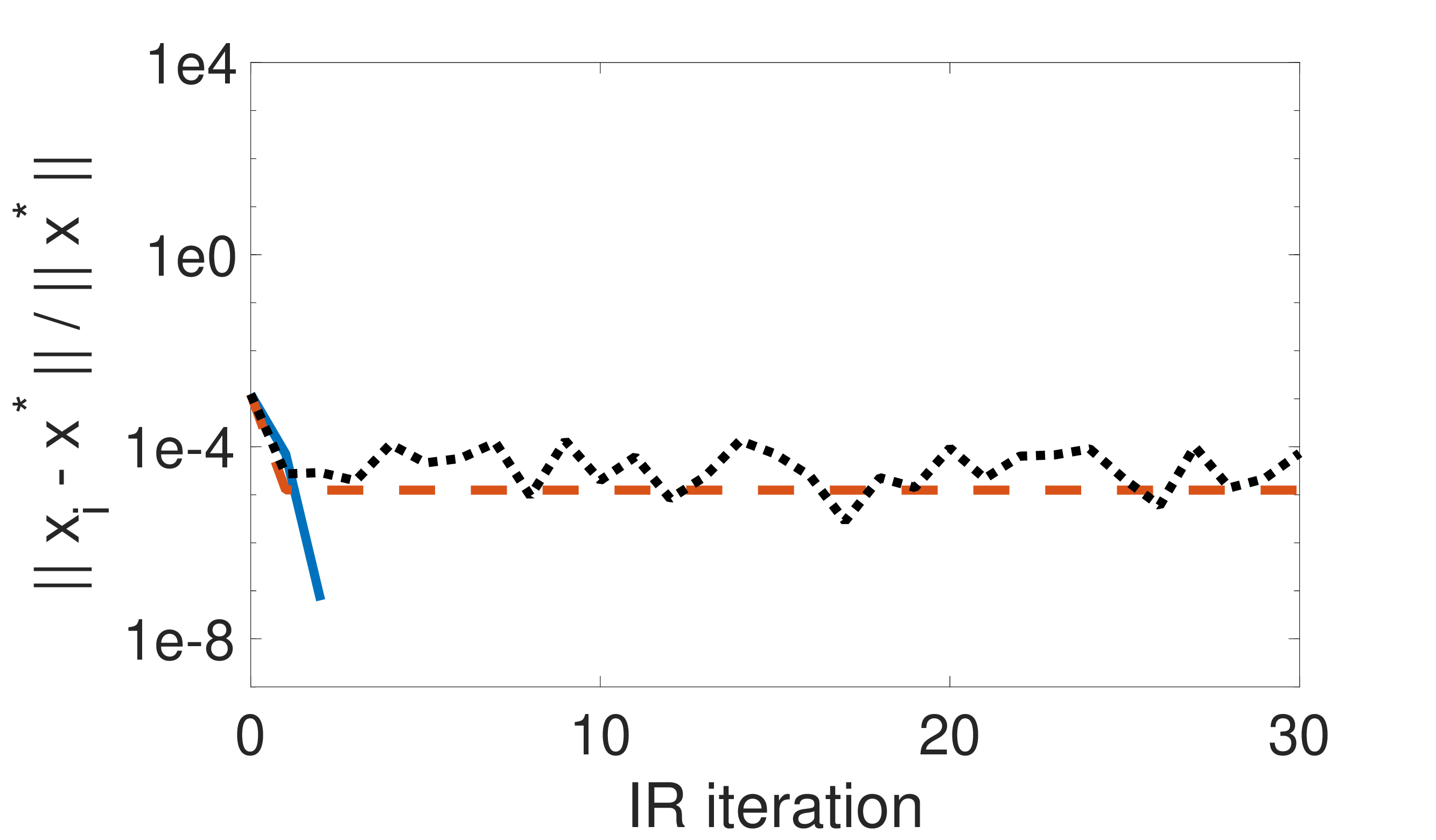}
  \caption{$\kappa_2(A)=$1e6, $\Vert r^* \Vert_2=$1e-7}
\end{subfigure}
    \caption{Relative error in $x$ at every IR iteration when solved via QR decomposition for combinations of large and small $\kappa_2(A)$ and $\Vert r^* \Vert_2$; $u_r$ is set to double and $u$ is set to single.}
    \label{fig:x_conv_qr}
\end{figure}

\begin{figure}
    \centering
\begin{subfigure}[t]{0.45\linewidth}
  \centering
 \includegraphics[width=\linewidth]{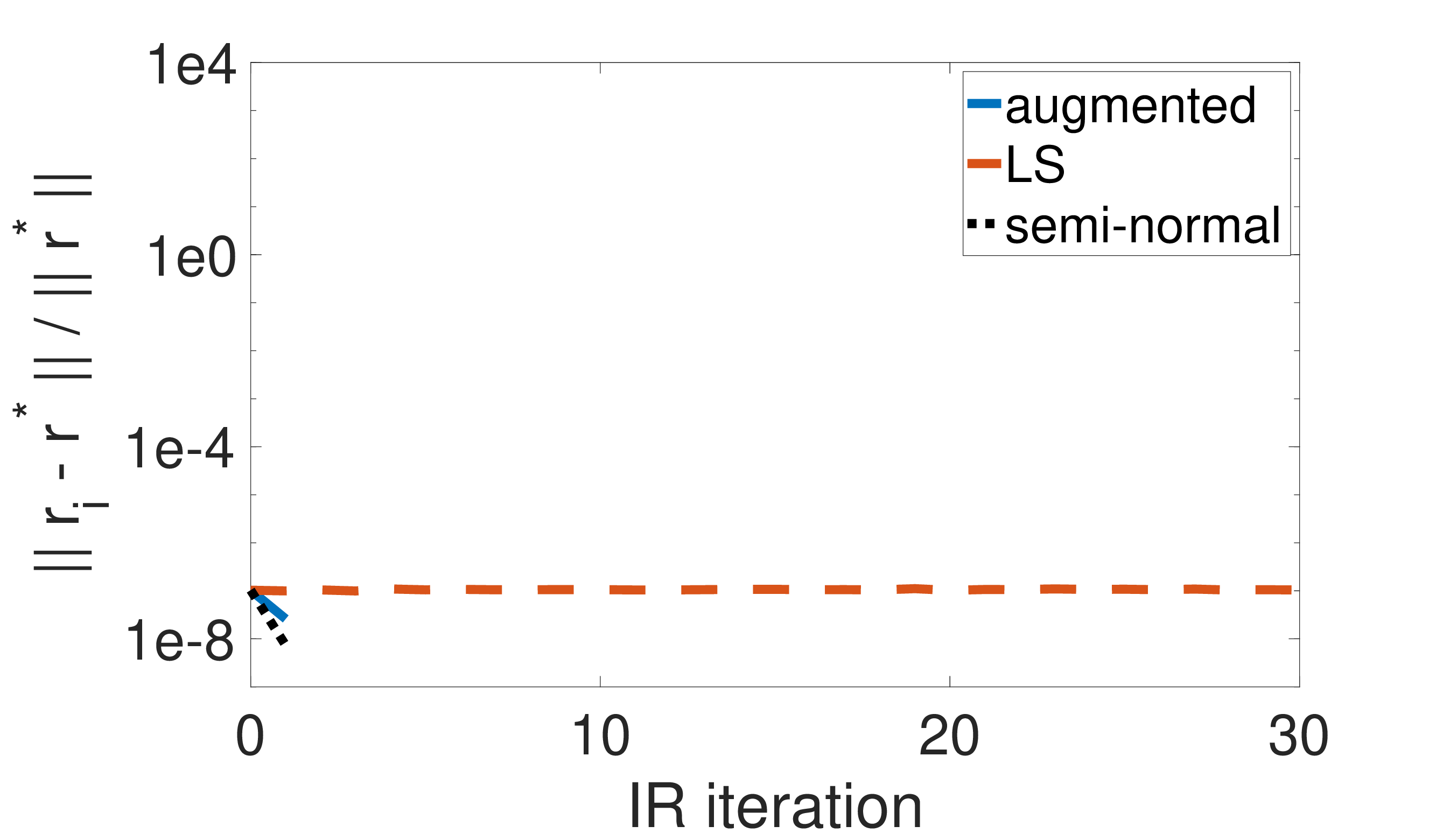}
  \caption{$\kappa_2(A)=$1e2, $\Vert r^* \Vert_2=$1e0 }
\end{subfigure}
\begin{subfigure}[t]{0.45\linewidth}
  \centering
 \includegraphics[width=\linewidth]{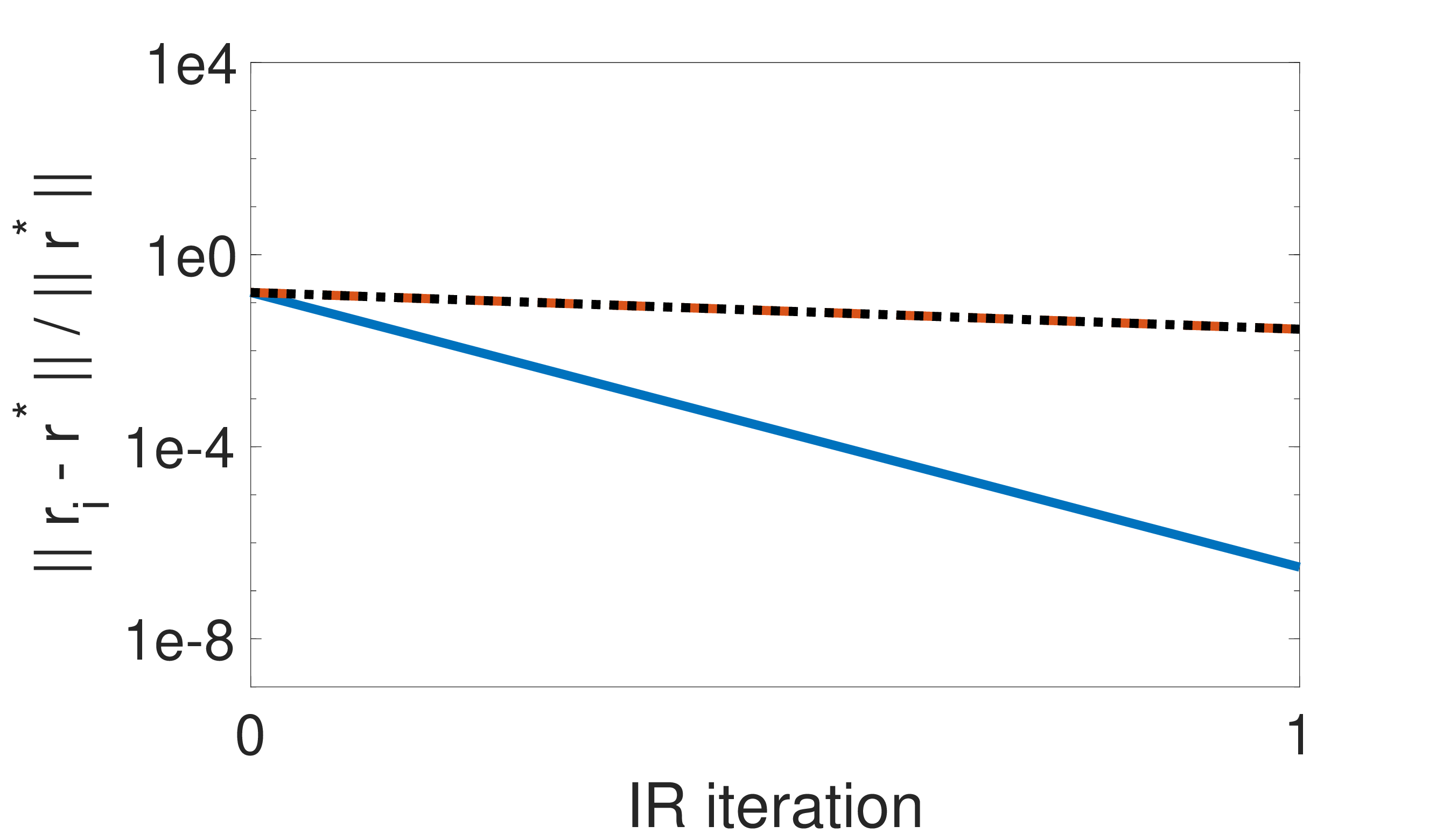}
  \caption{$\kappa_2(A)=$1e2, $\Vert r^* \Vert_2=$1e-7}
\end{subfigure}
\begin{subfigure}[t]{0.45\linewidth}
  \centering
 \includegraphics[width=\linewidth]{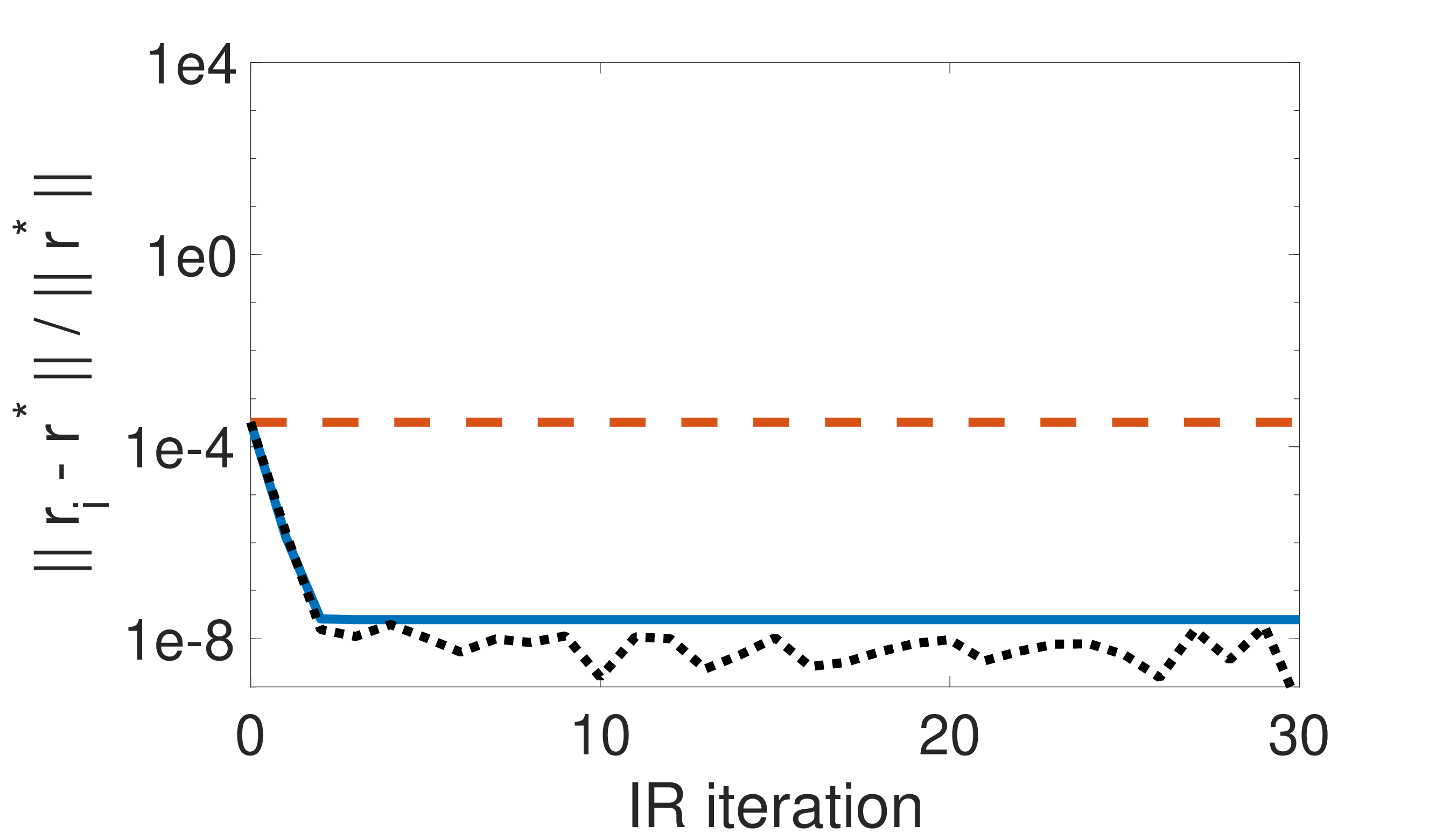}
  \caption{$\kappa_2(A)=$1e6, $\Vert r^* \Vert_2=$1e0 }
\end{subfigure}
\begin{subfigure}[t]{0.45\linewidth}
  \centering
 \includegraphics[width=\linewidth]{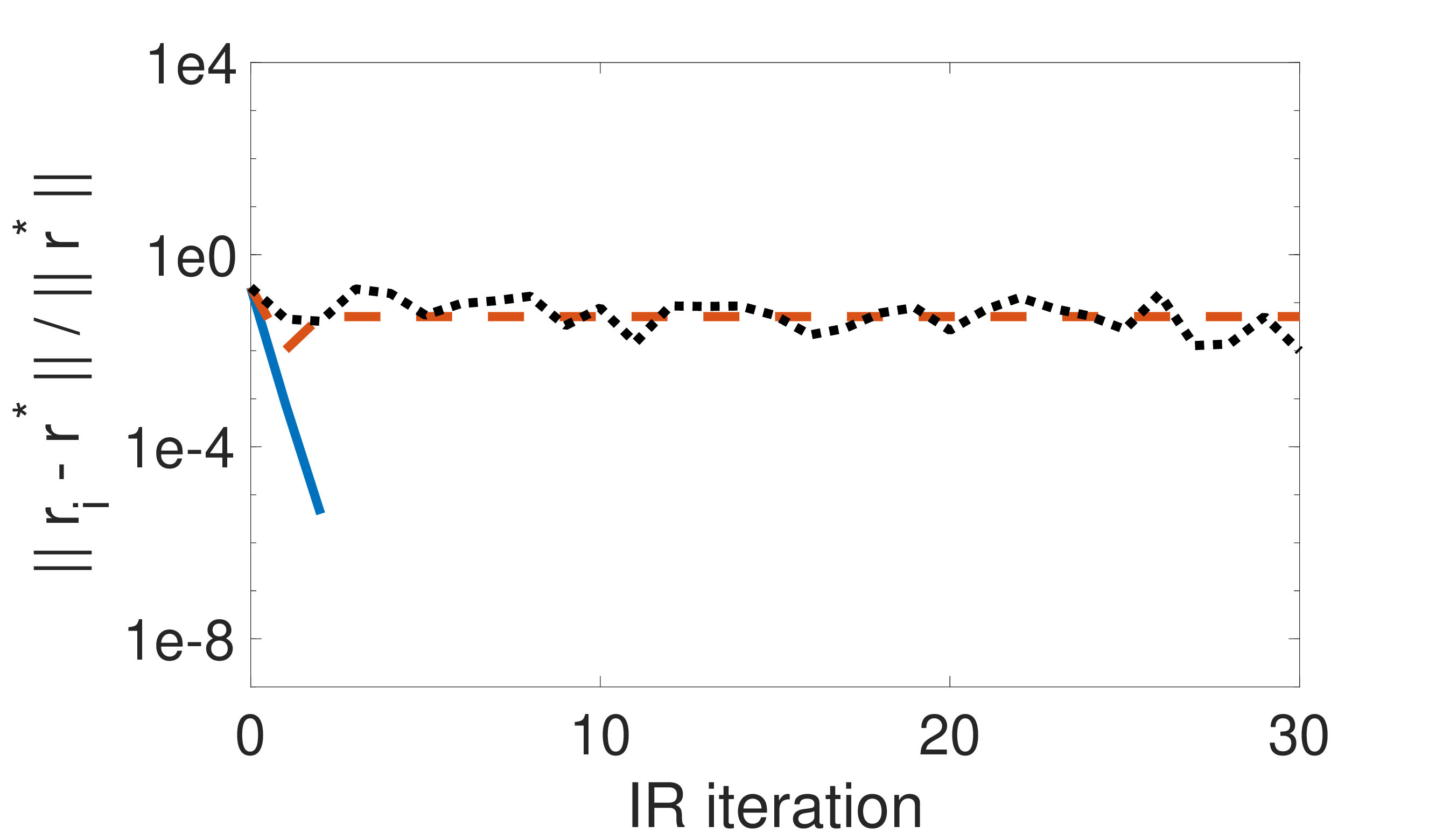}
  \caption{$\kappa_2(A)=$1e6, $\Vert r^* \Vert_2=$1e-7}
\end{subfigure}
    \caption{As in Figure~\ref{fig:x_conv_qr}, but for the relative error in $r$.}
    \label{fig:r_conv_qr}
\end{figure}

\subsection{Solving via iterative methods}
We perform some numerical experiments solving the correction equations via iterative methods, namely, LSQR for the least-squares system approach and GMRES for the augmented system approach. GMRES is used here because of the available backward error guarantees even for highly ill-conditioned problems. In practice, one may choose more appropriate solvers like MINRES; see, e.g., \cite{carson2020three} for results with using both GMRES and MINRES in LSIR. We also test the combined approach in Algorithm~\ref{alg:combined_approach} using LSQR for the least-squares problems and computing $P_N \fhat_i$ as $\fhat_i - A \delta \xhat_1^{(1)}$ as discussed in Section~\ref{sec:combining_ls_augmented}. 
Note that the success of these iterative methods depends on factors such as preconditioning, convergence criteria, the chosen scaling factor for the augmented system, precisions for computing matrix-vector products and applying the preconditioner, etc. The choice of these parameters is heavily problem dependent. We do not aim to provide an extensive study of these effects but rather to illustrate some of the behavior. We thus perform experiments using a randomized preconditioner, which has been shown to be effective for overdetermined LS problems with $n \ll m$ \cite{rokhlin2008fast,avron2010blendenpik}. 

We use the built-in MATLAB implementation of LSQR and our own implementation of GMRES. The convergence tolerances are set to $10^{-14}$ and $10^{-7}$ for LSQR and $10^{-12}$ and $10^{-6}$ for GMRES when $u$ is set to double and single, respectively. Additional experiments show that larger tolerances lead to worse performance (results not shown). The maximum number of LSQR iterations is set to $n=10$, and $50$ for GMRES. The initial approximate solution $x_0$ is obtained via preconditioned LSQR solve in all cases and is then used to compute $r_0 = b - A x_0$. As was observed by Golub when using a direct solver \cite{golub1965numerical}, additional numerical experiments show that the LS system approach shows large sensitivity to the quality of $x_0$ (results not shown here).

To generate the preconditioner, we compute the economic QR decomposition of $\Omega A$, where $\Omega=(4n)^{-1/2}G$ and $G$ is a $4n \times m$ random matrix with entries drawn from a standard normal distribution. The $R$-factor $R$ is used as a right preconditioner in LSQR for all the least-squares problems, except step~\ref{step:combined_underdetermined_LS} in Algorithm~\ref{alg:combined_approach} where it is used on the left. The augmented system is preconditioned with a split-preconditioner as proposed in \cite{carson2024mixed}, that is, we solve 
\begin{equation*}
   \begin{bmatrix}
I & 0 \\ 0 & R^{-T} 
\end{bmatrix} 
\begin{bmatrix}
 I & A\\ A^T & 0
\end{bmatrix}
\begin{bmatrix}
I & 0 \\ 0 & R^{-1} 
\end{bmatrix}
\begin{bmatrix}
\delta r_i \\  \delta \widetilde{x}_i
\end{bmatrix}
=
 \begin{bmatrix}
I & 0 \\ 0 & R^{-T} 
\end{bmatrix} 
\begin{bmatrix}
 f_i \\ g_i
\end{bmatrix},
\end{equation*}
where $\delta \widetilde{x}_i = R \delta x_i$. See \cite{carson2024mixed} for a theoretical analysis of FGMRES-based IR with such randomized preconditioning.

Note that $x_0$ is different than when solving via QR decomposition and we thus need to run IR for some problems that did not require any refinement in the previous section; see Figures~\ref{fig:ir_iterations_iterative}, \ref{fig:ir_inner_iterations_iterative}, \ref{fig:x_error_iterative}, and \ref{fig:r_error_iterative}. The augmented approach converges for the widest range of problems, although for highly ill-conditioned problems it performs worse than the QR approach. This may be tackled by tuning the parameters and precisions used inside the iterative solver. The LS approach converges for similar sets of problems as when solving via QR decomposition. The combined approach performs similar to the LS approach, except the combined approach converges for problems with larger $\Vert r^* \Vert_2$ and gives a substantially better improvement for the residual at the cost of more LSQR iterations.  
Convergence curves in Figures~\ref{fig:x_conv_iterative} and \ref{fig:r_conv_iterative} confirm these observations, and show that the combined approach follows the the behaviour of the augmented approach for well-conditioned problems and is similar to the LS approach for ill-conditioned problems.

\begin{figure}
    \centering
\begin{subfigure}[t]{0.45\linewidth}
  \centering
 \includegraphics[width=\linewidth]{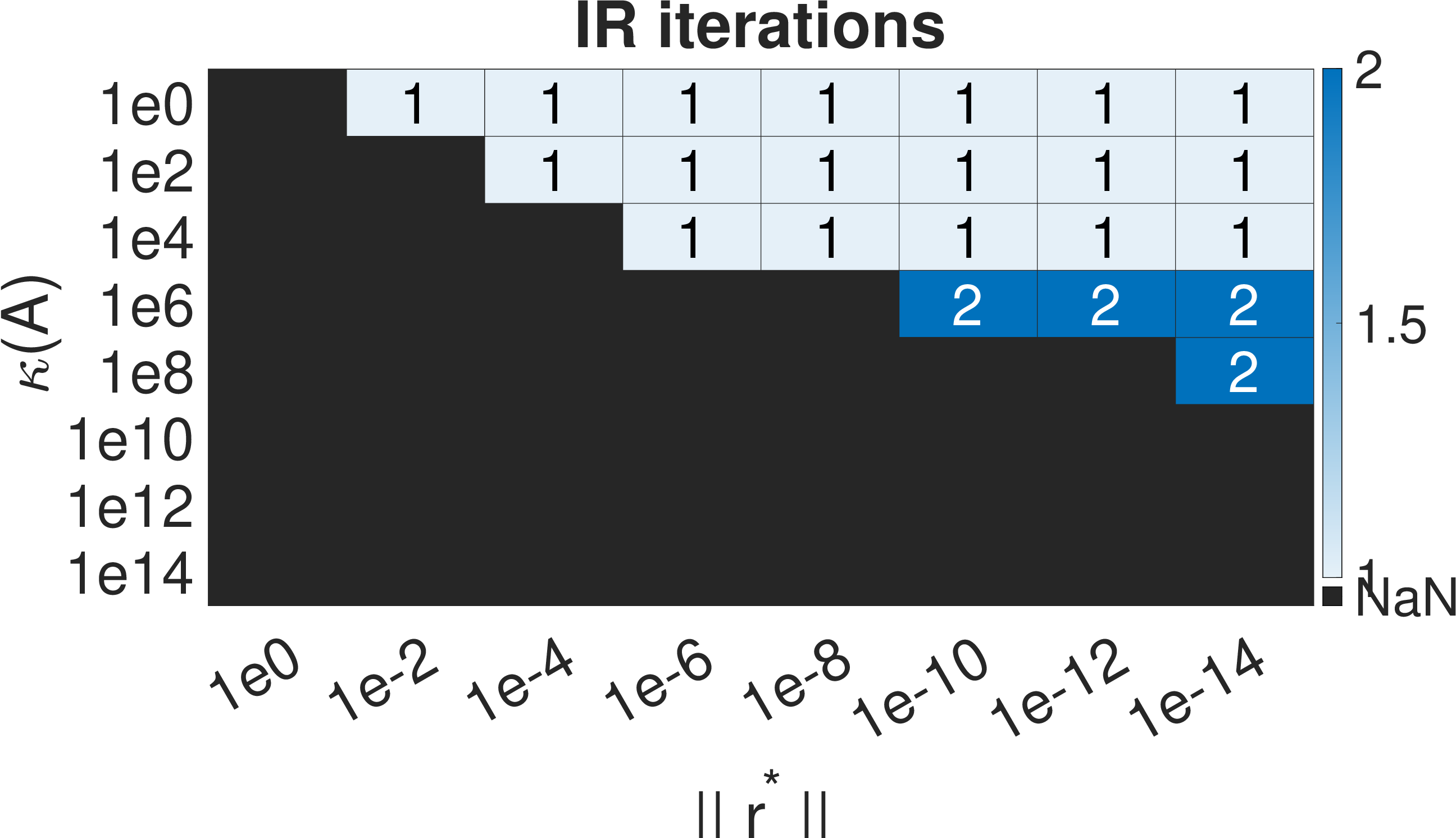}
  \caption{LS approach, (quad, double)}
\end{subfigure}
\begin{subfigure}[t]{0.45\linewidth}
  \centering
 \includegraphics[width=\linewidth]{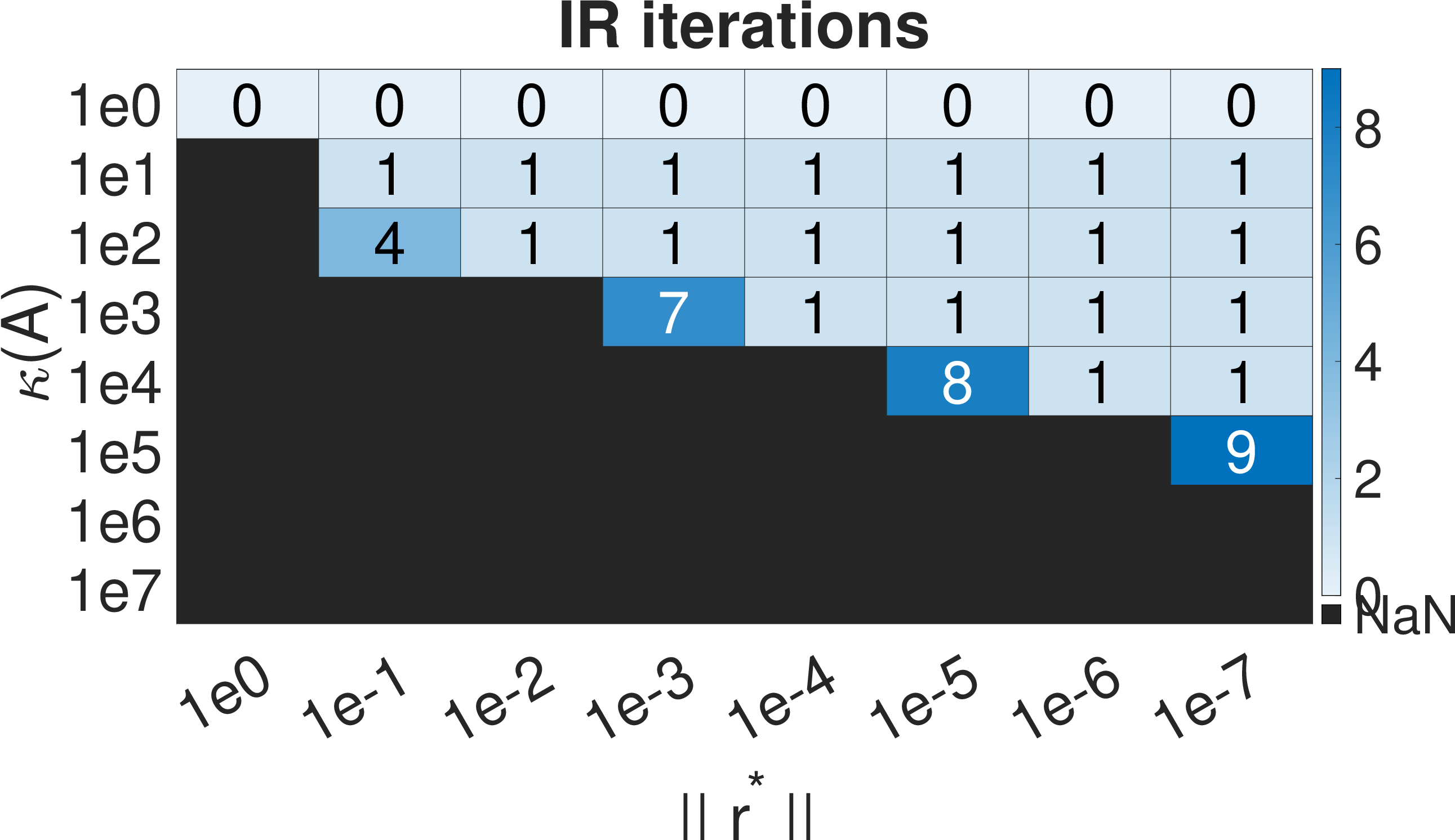}
  \caption{LS approach, (double, single)}
\end{subfigure}
\begin{subfigure}[t]{0.45\linewidth}
  \centering
 \includegraphics[width=\linewidth]{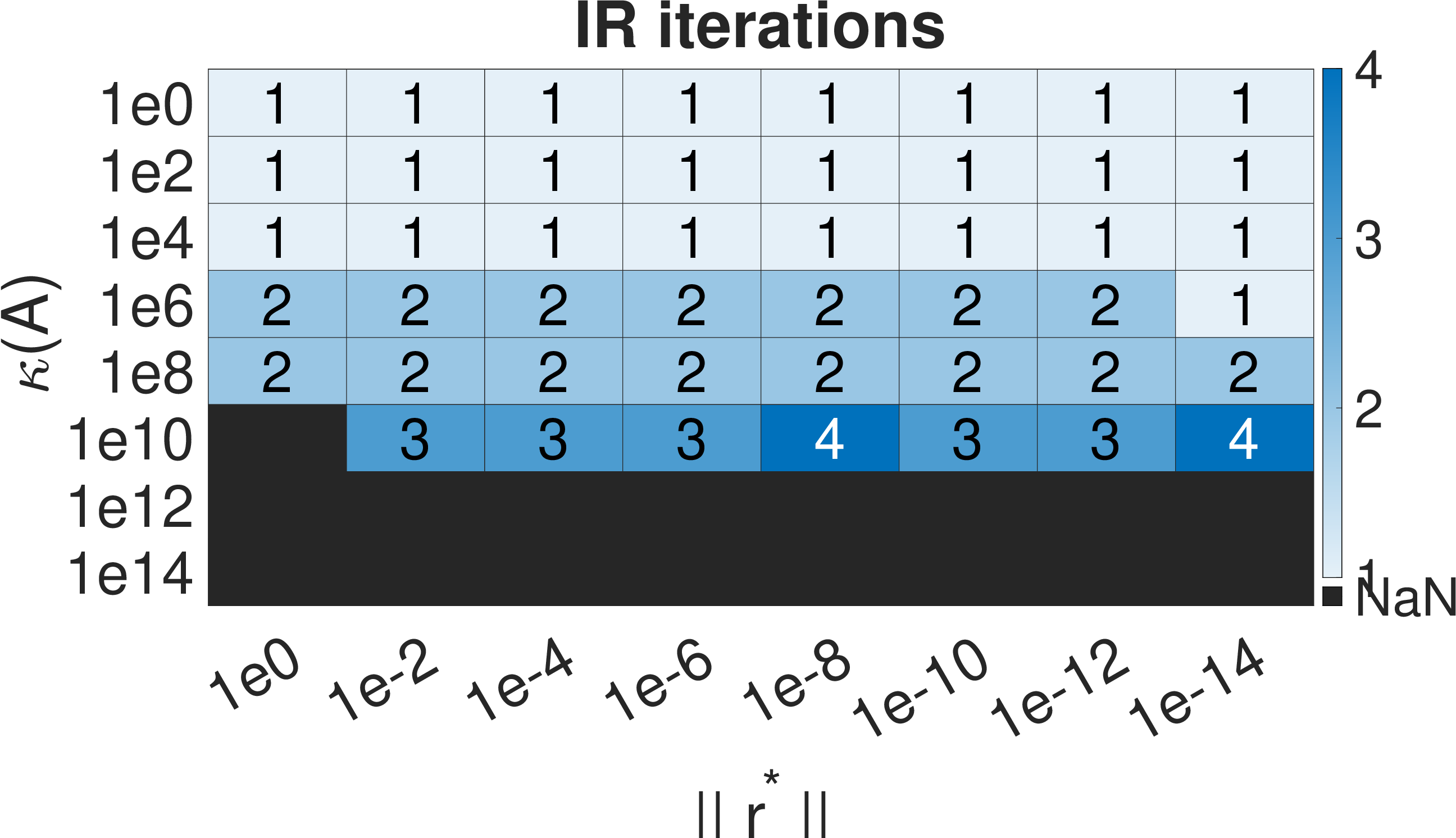}
  \caption{augmented approach, (quad, double)}
\end{subfigure}
\begin{subfigure}[t]{0.45\linewidth}
  \centering
 \includegraphics[width=\linewidth]{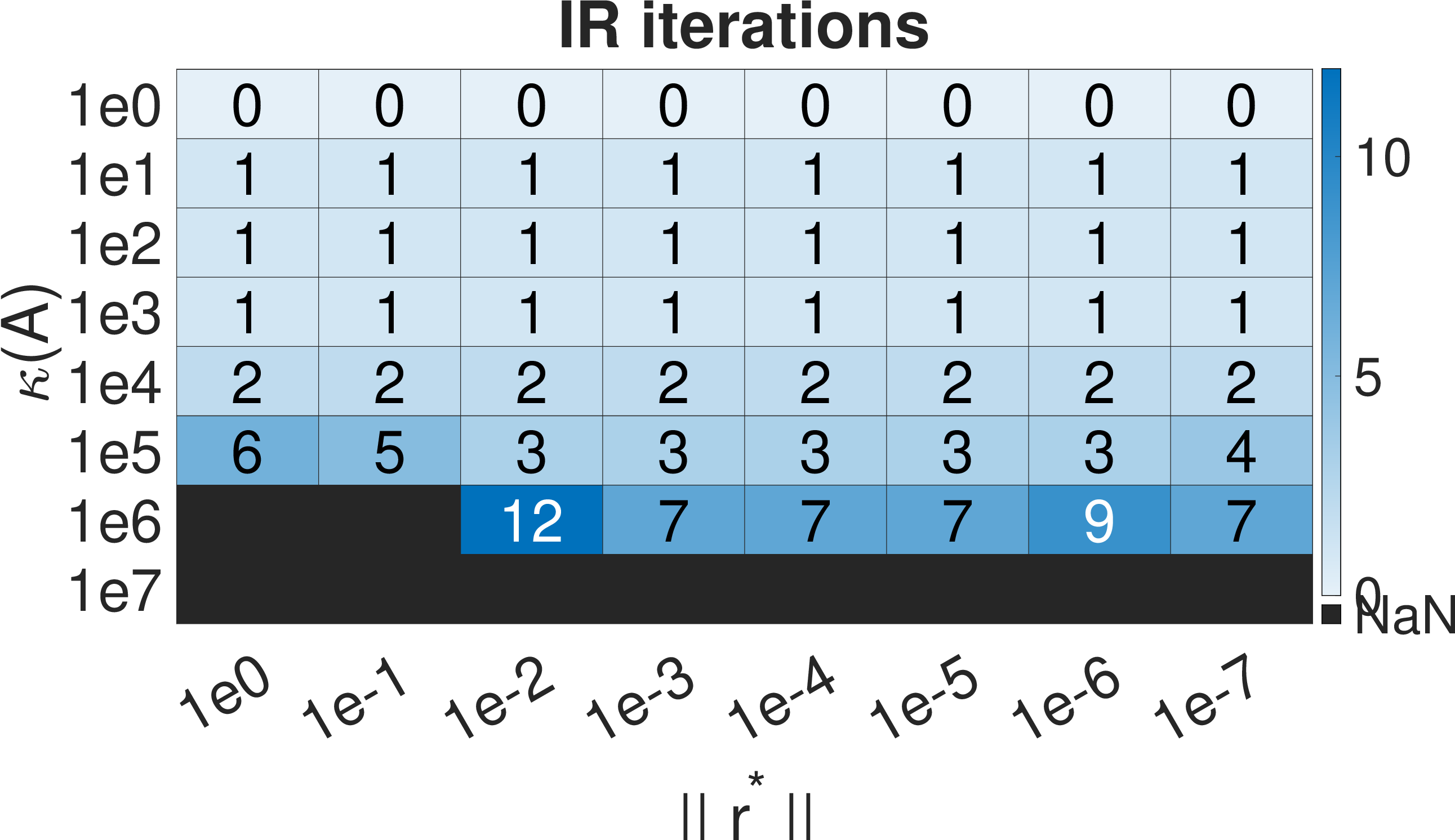}
  \caption{augmented approach, (double, single)}
\end{subfigure}
\begin{subfigure}[t]{0.45\linewidth}
  \centering
 \includegraphics[width=\linewidth]{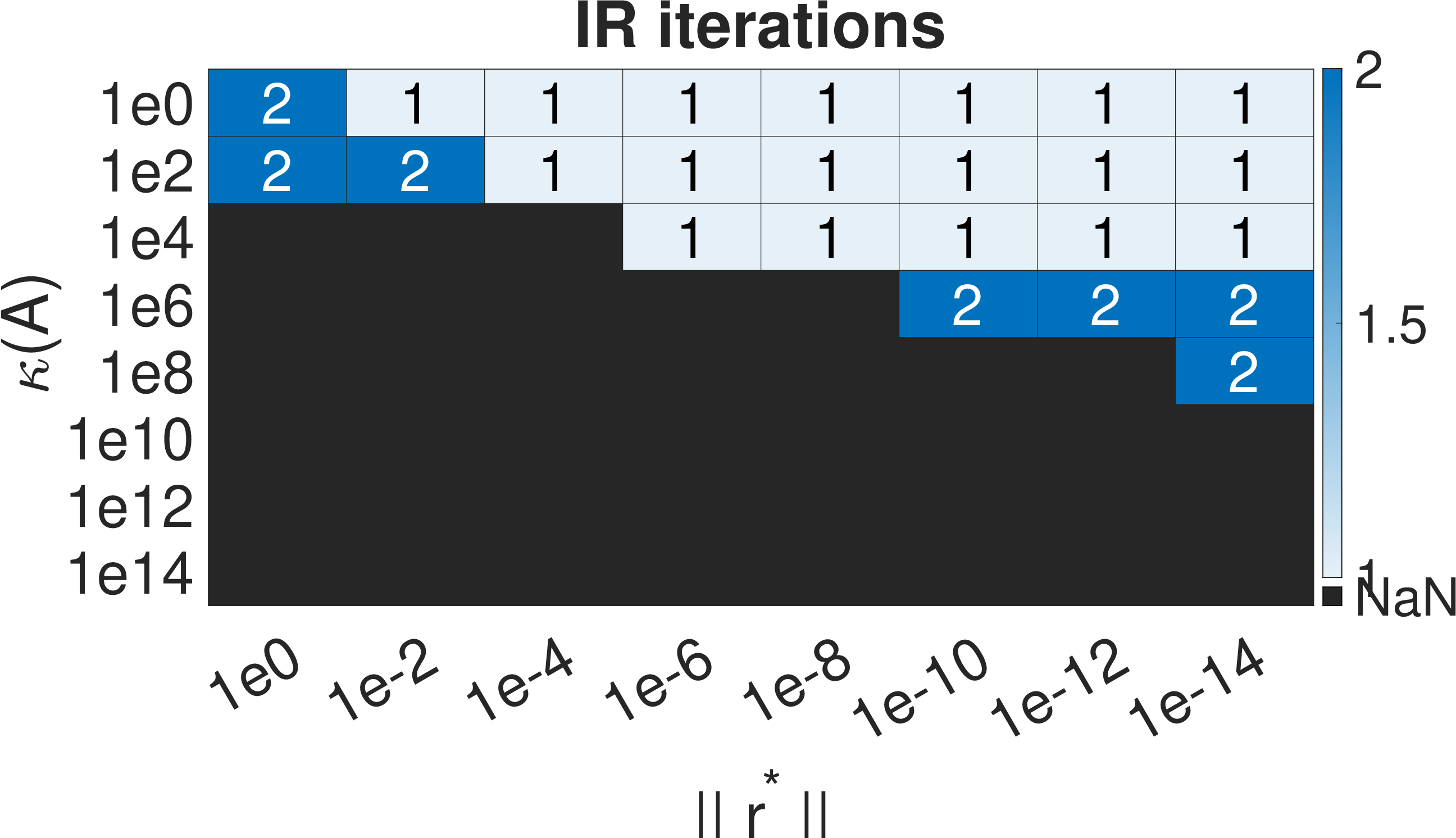}
  \caption{combined approach, (quad, double)}
\end{subfigure}
\begin{subfigure}[t]{0.45\linewidth}
  \centering
 \includegraphics[width=\linewidth]{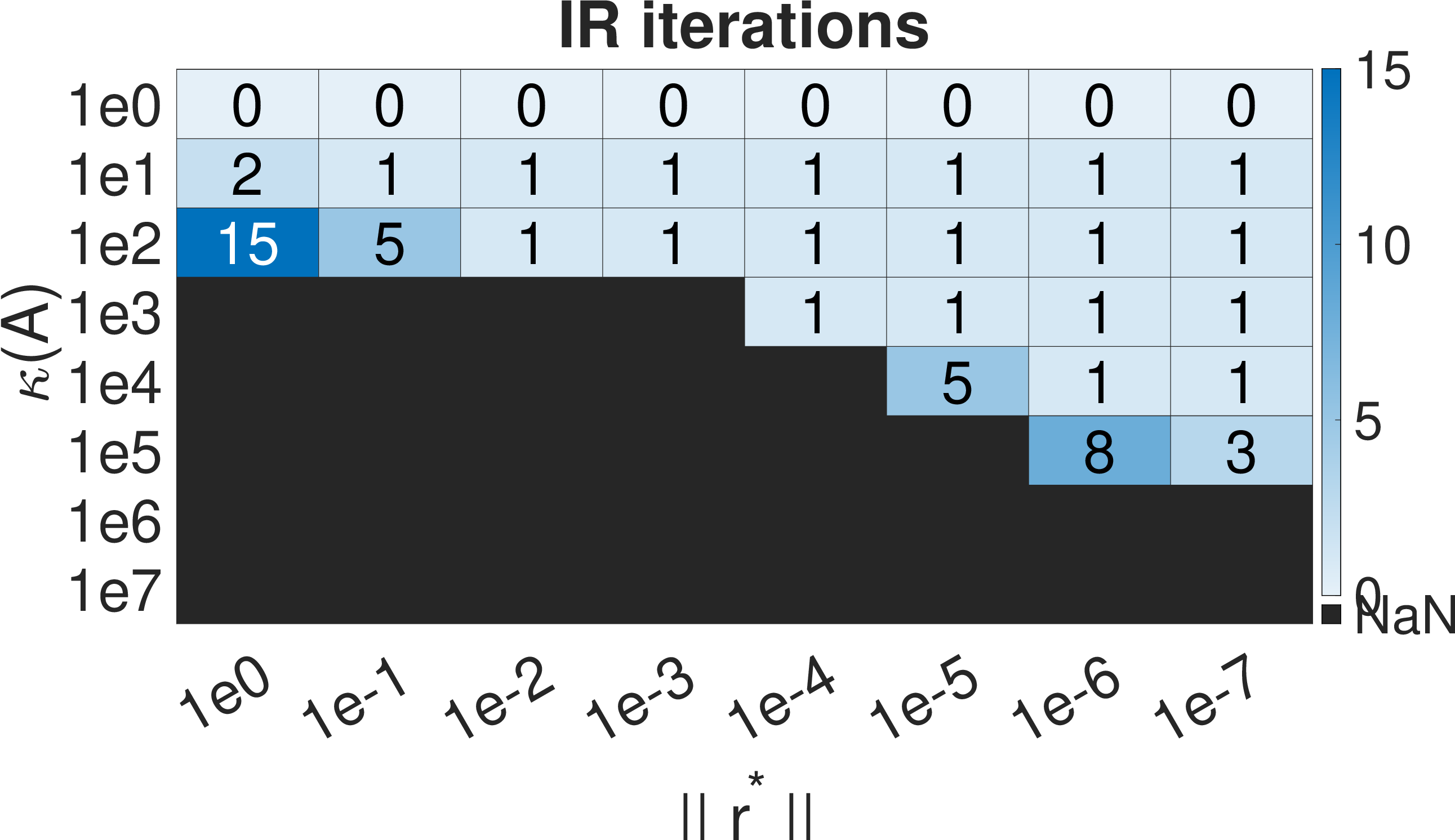}
  \caption{combined approach, (double, single)}
\end{subfigure}
    \caption{Count of iterative refinement iterations using preconditioned LSQR and GMRES for the LS, augmented system, and combined approaches when $u_r$ is set to quadruple and $u$ is set to double (left panels), and $u_r$ is set to double and $u$ is set to single (right). Black denotes when LSIR does not converge in 30 iterations.}
    \label{fig:ir_iterations_iterative}
\end{figure}

\begin{figure}
    \centering
\begin{subfigure}[t]{0.45\linewidth}
  \centering
 \includegraphics[width=\linewidth]{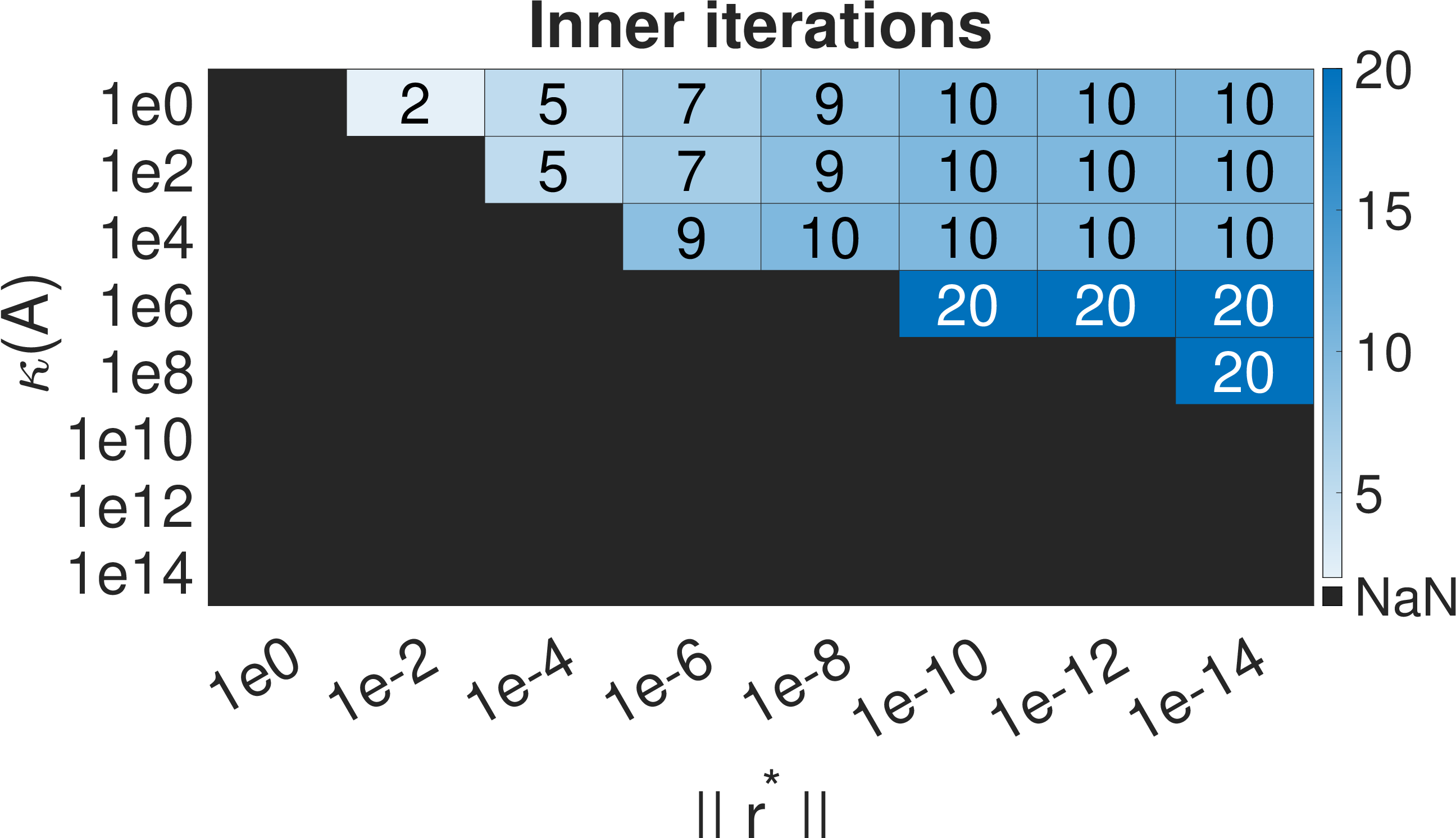}
  \caption{LS approach, (quad, double)}
\end{subfigure}
\begin{subfigure}[t]{0.45\linewidth}
  \centering
 \includegraphics[width=\linewidth]{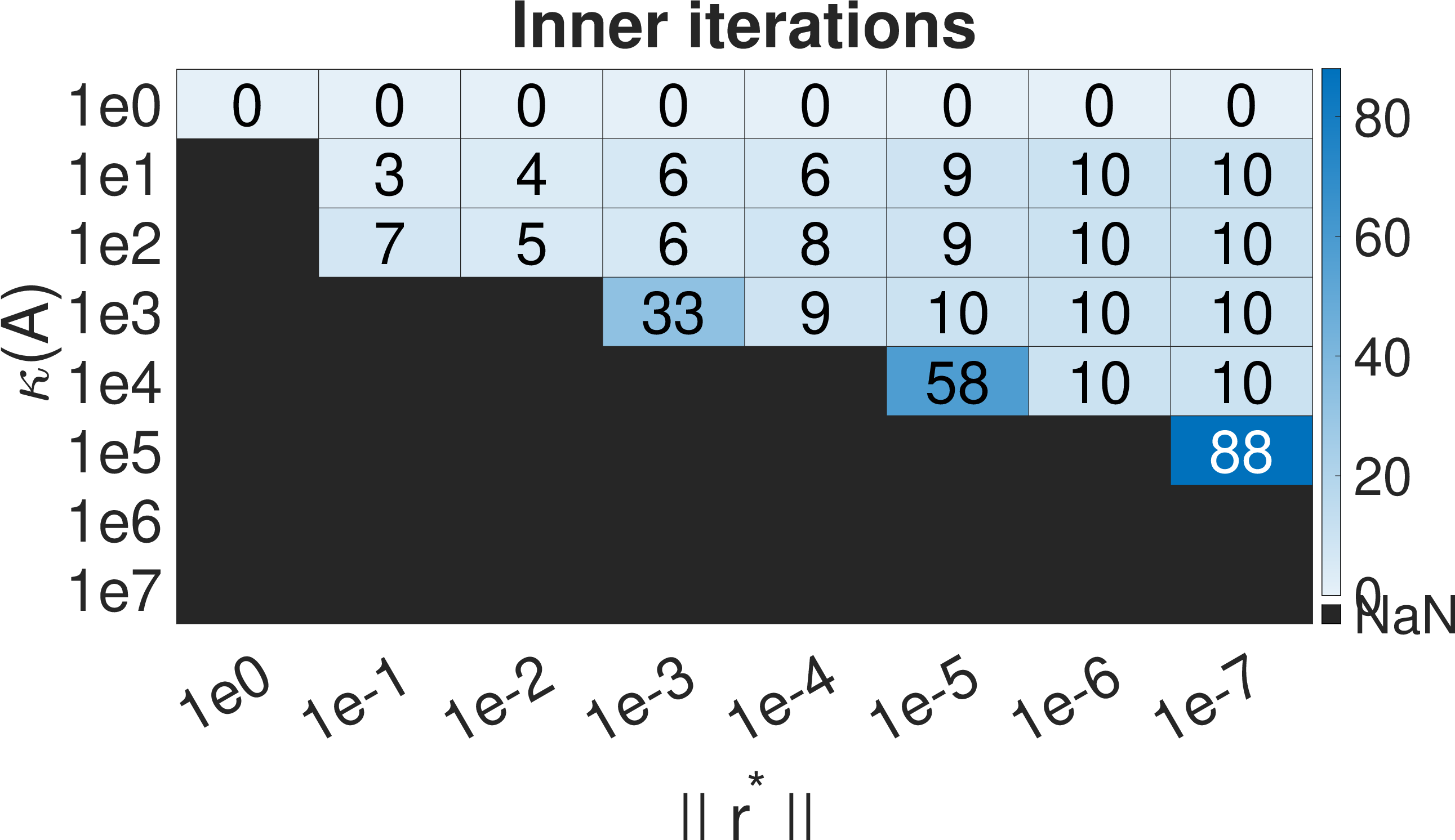}
  \caption{LS approach, (double, single)}
\end{subfigure}
\begin{subfigure}[t]{0.45\linewidth}
  \centering
 \includegraphics[width=\linewidth]{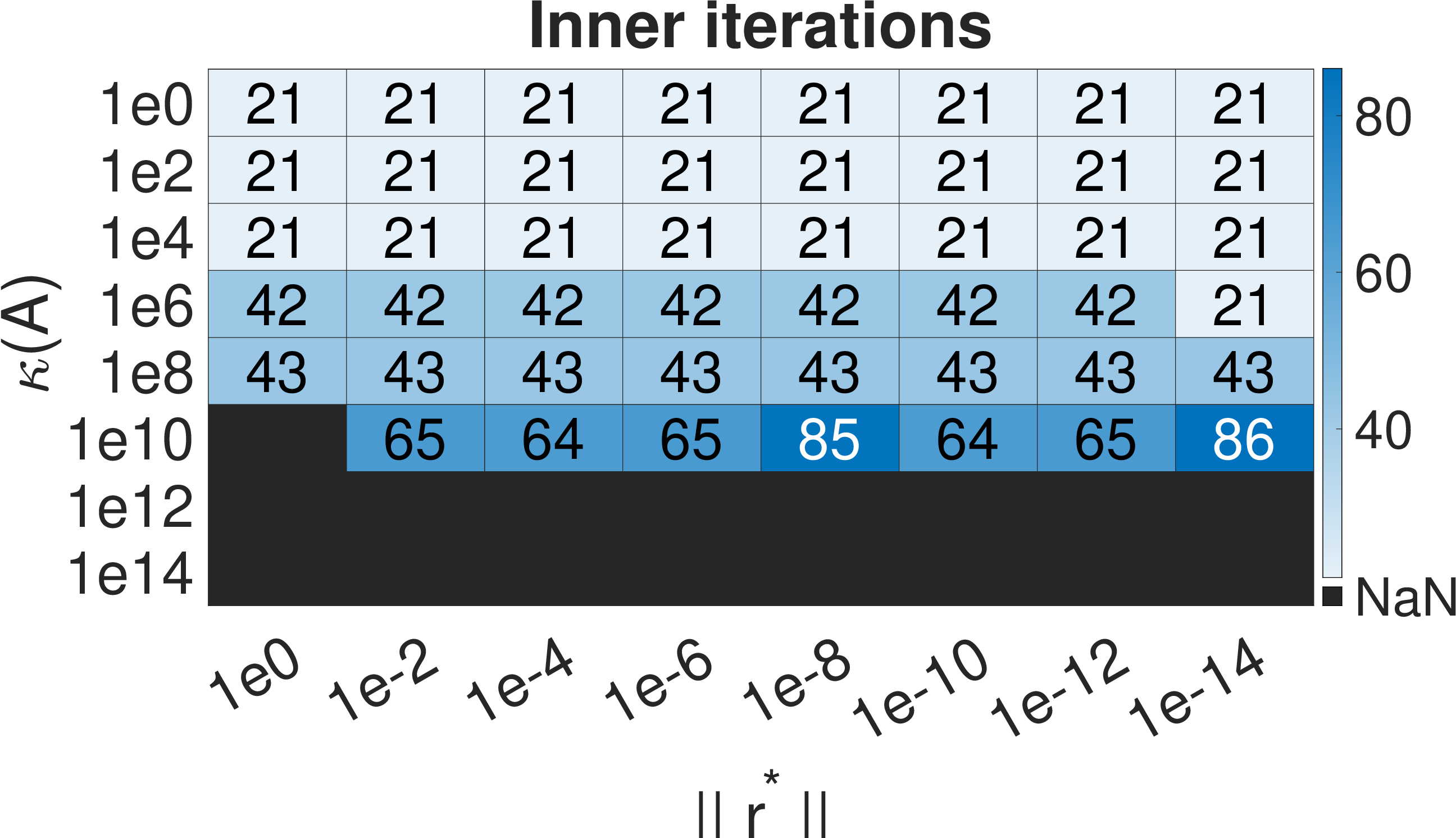}
  \caption{augmented approach, (quad, double)}
\end{subfigure}
\begin{subfigure}[t]{0.45\linewidth}
  \centering
 \includegraphics[width=\linewidth]{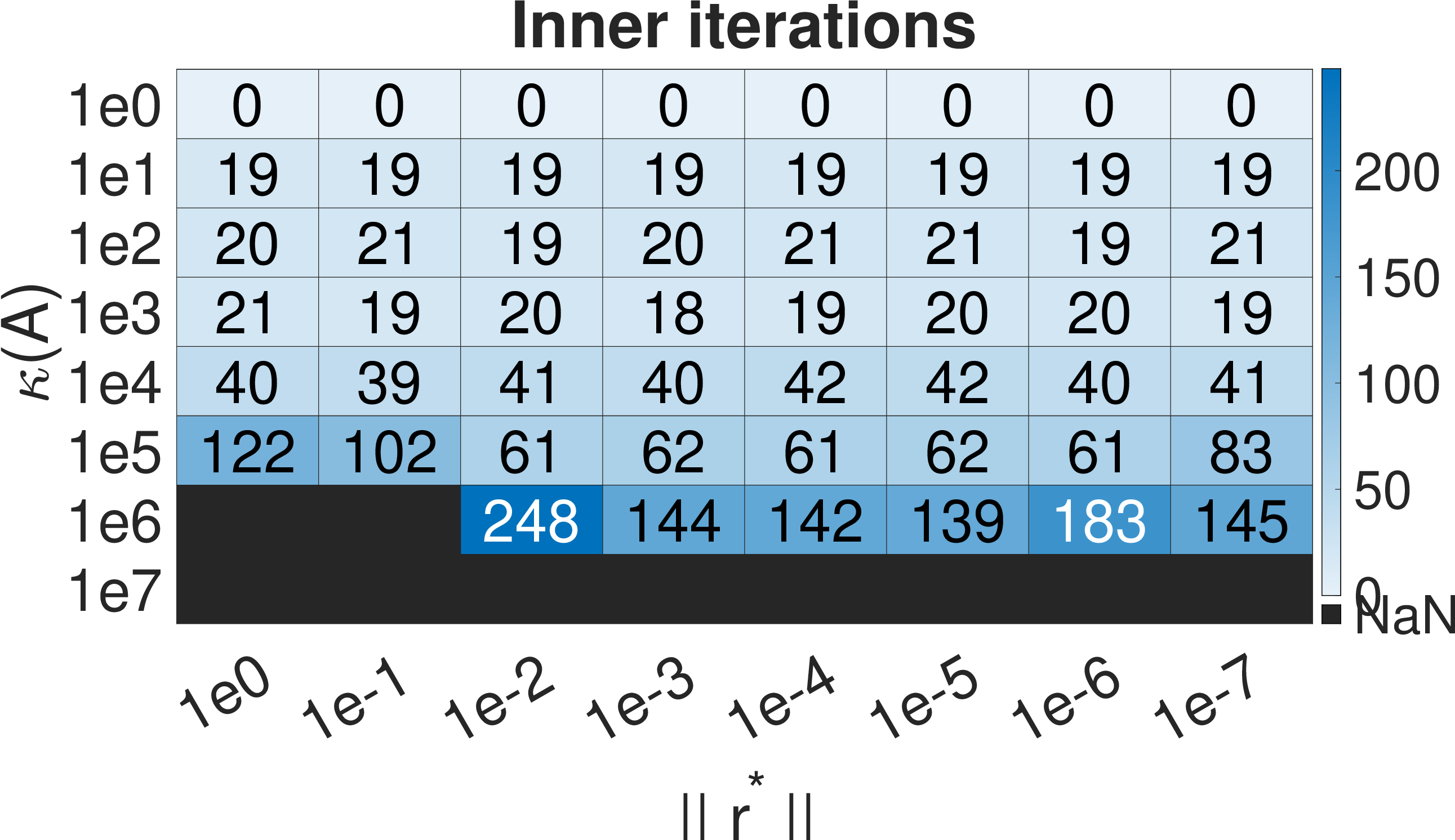}
  \caption{augmented approach, (double, single)}
\end{subfigure}
\begin{subfigure}[t]{0.45\linewidth}
  \centering
 \includegraphics[width=\linewidth]{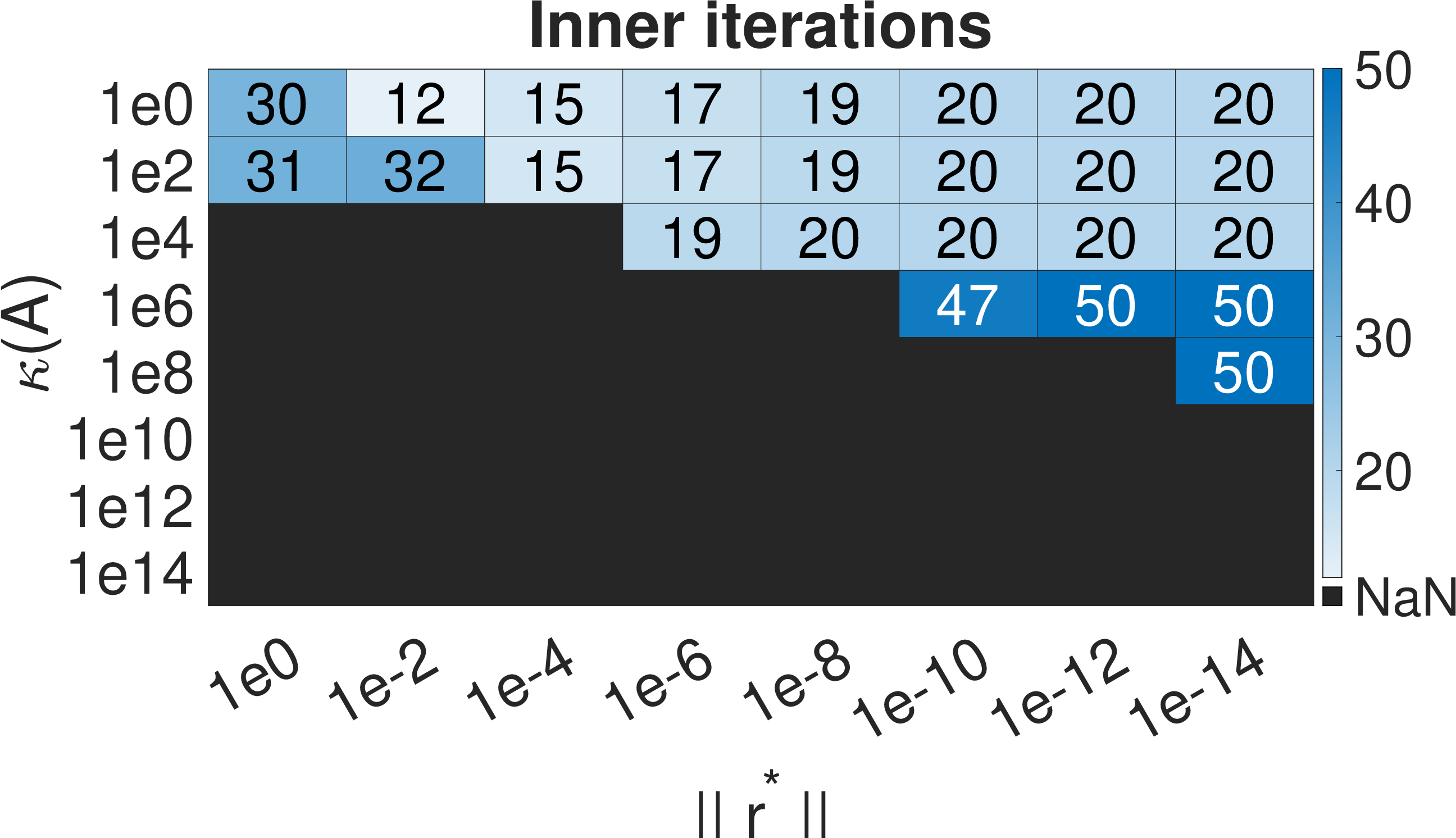}
  \caption{combined approach, (quad, double)}
\end{subfigure}
\begin{subfigure}[t]{0.45\linewidth}
  \centering
 \includegraphics[width=\linewidth]{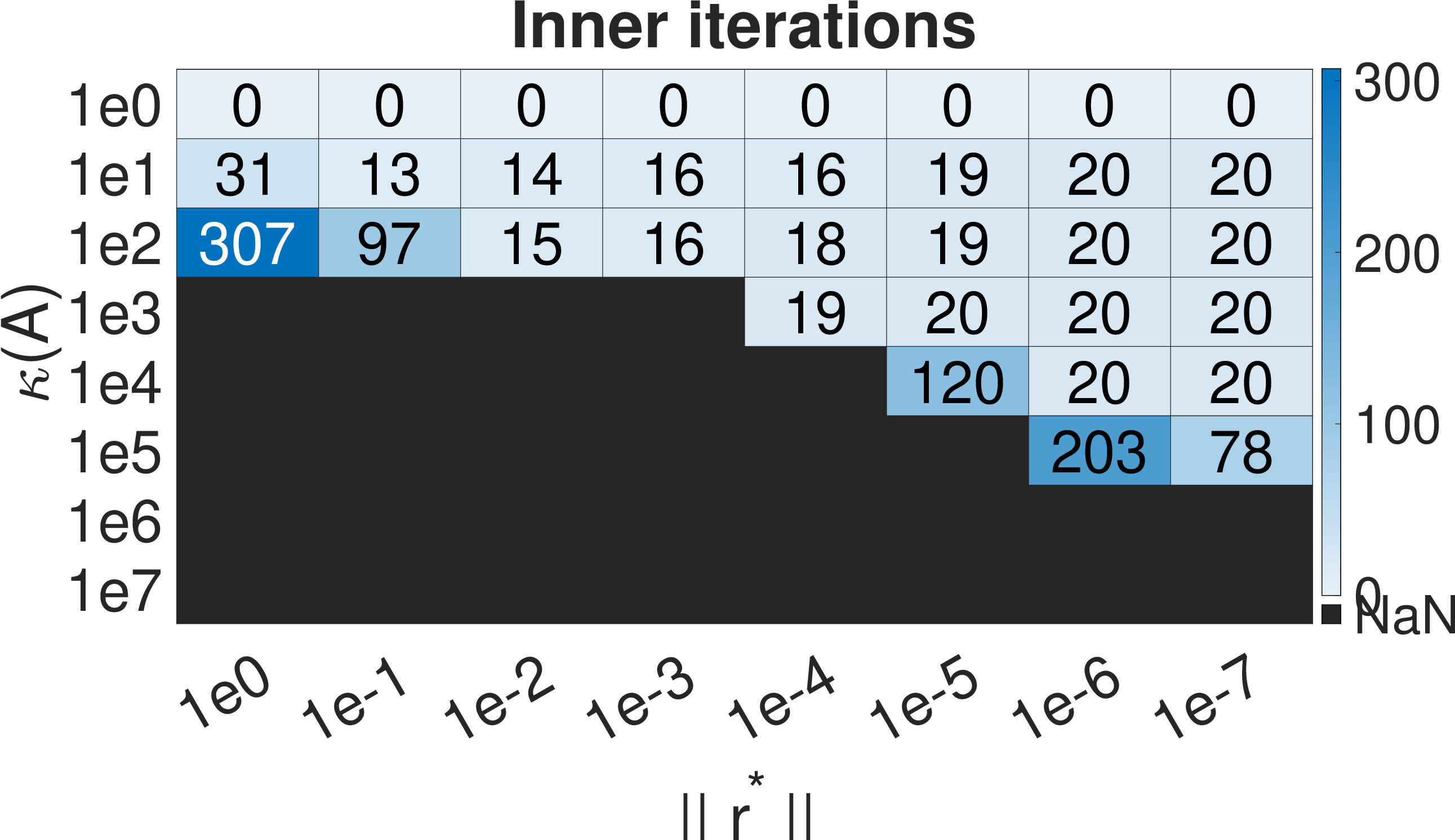}
  \caption{combined approach, (double, single)}
\end{subfigure}
    \caption{As in Figure~\ref{fig:ir_iterations_iterative}, but for the cumulative number of inner iterative refinement iterations, that is, LSQR iterations for the LS and combined approaches, and GMRES iterations for the augmented approach. Black denotes when LSIR does not converge in 30 iterations.}
    \label{fig:ir_inner_iterations_iterative}
\end{figure}

\begin{figure}
    \centering
\begin{subfigure}[t]{0.45\linewidth}
  \centering
 \includegraphics[width=\linewidth]{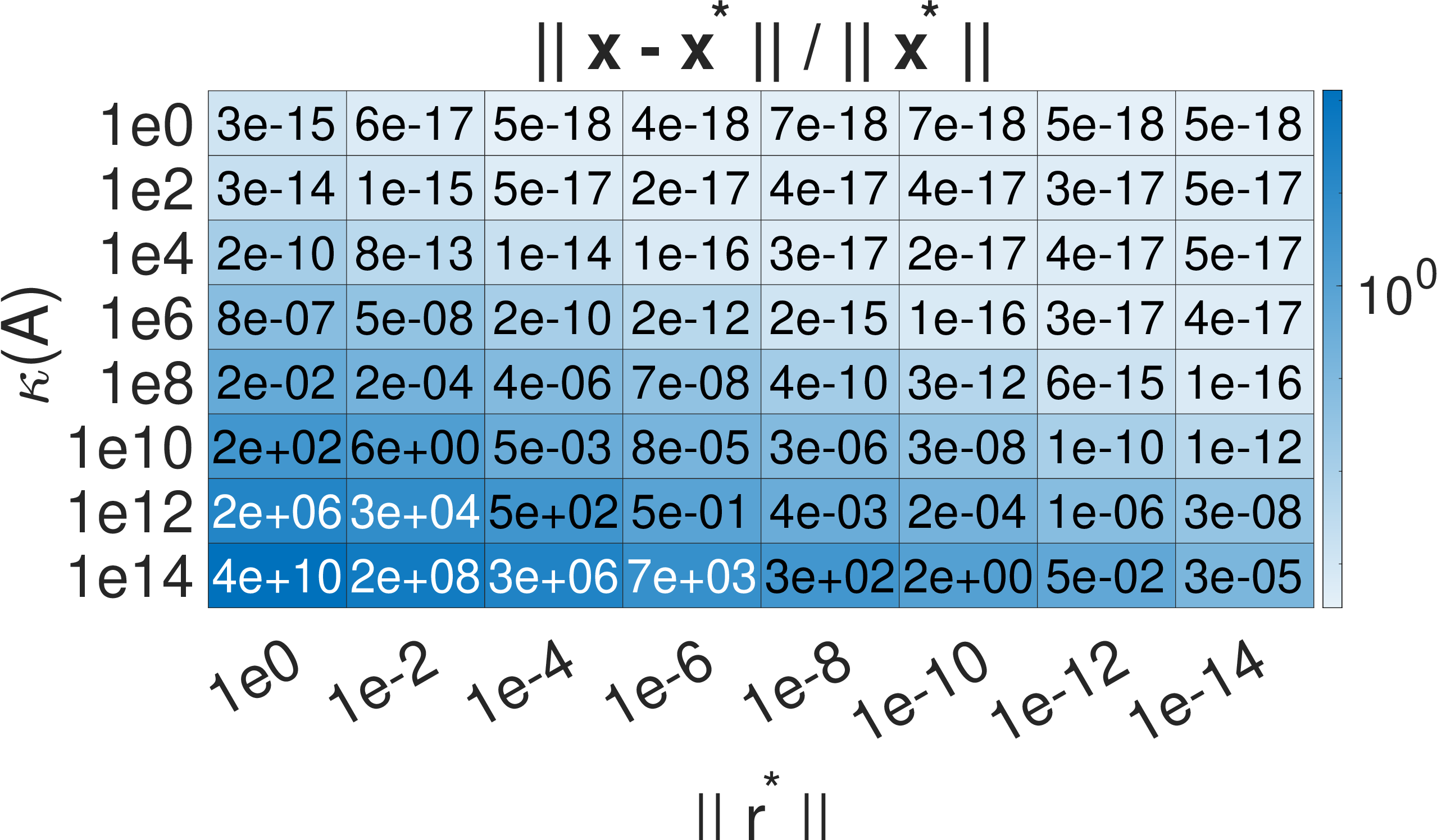}
  \caption{LS approach, (quad, double)}
\end{subfigure}
\begin{subfigure}[t]{0.45\linewidth}
  \centering
 \includegraphics[width=\linewidth]{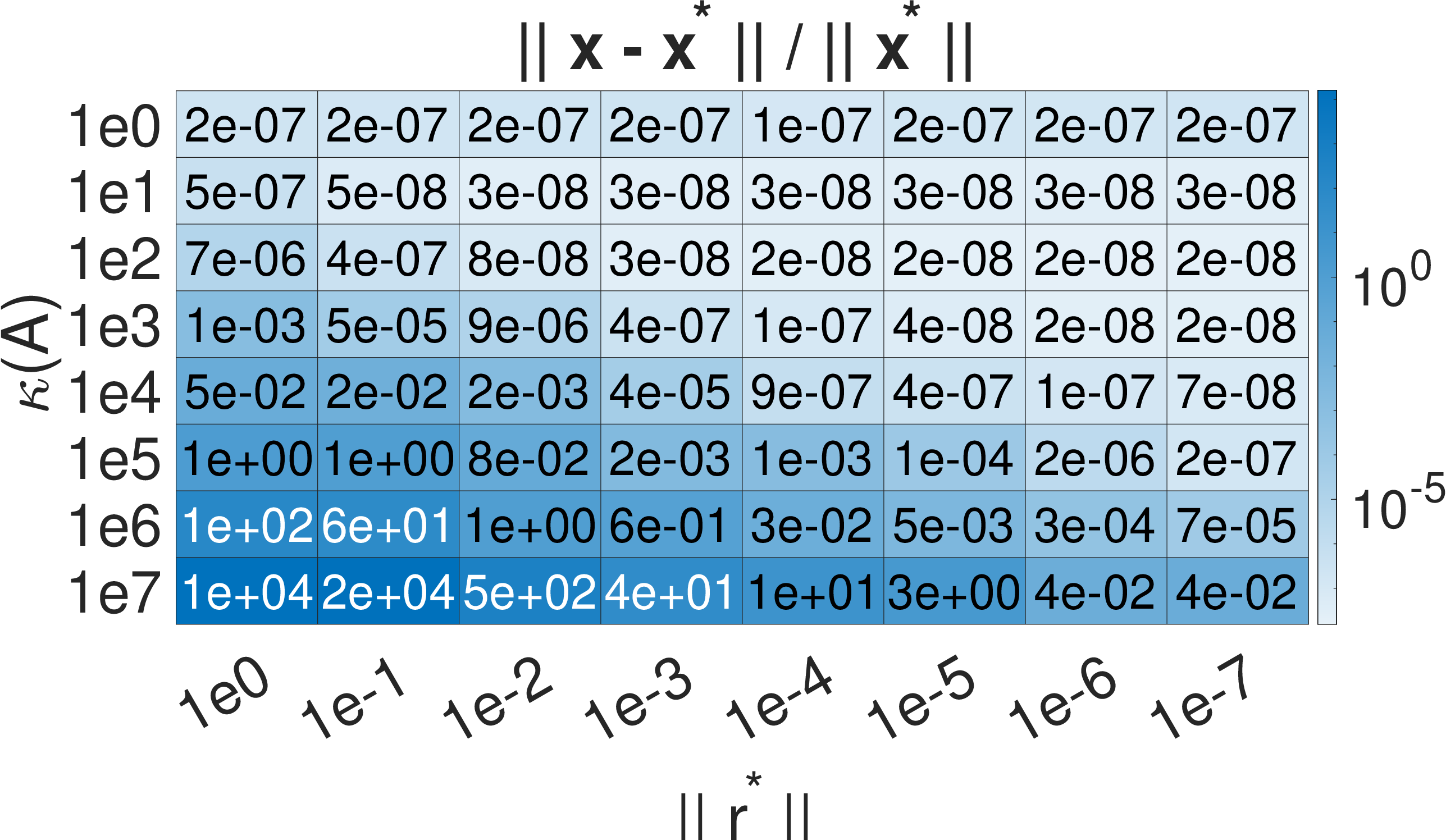}
  \caption{LS approach, (double, single)}
\end{subfigure}
\begin{subfigure}[t]{0.45\linewidth}
  \centering
 \includegraphics[width=\linewidth]{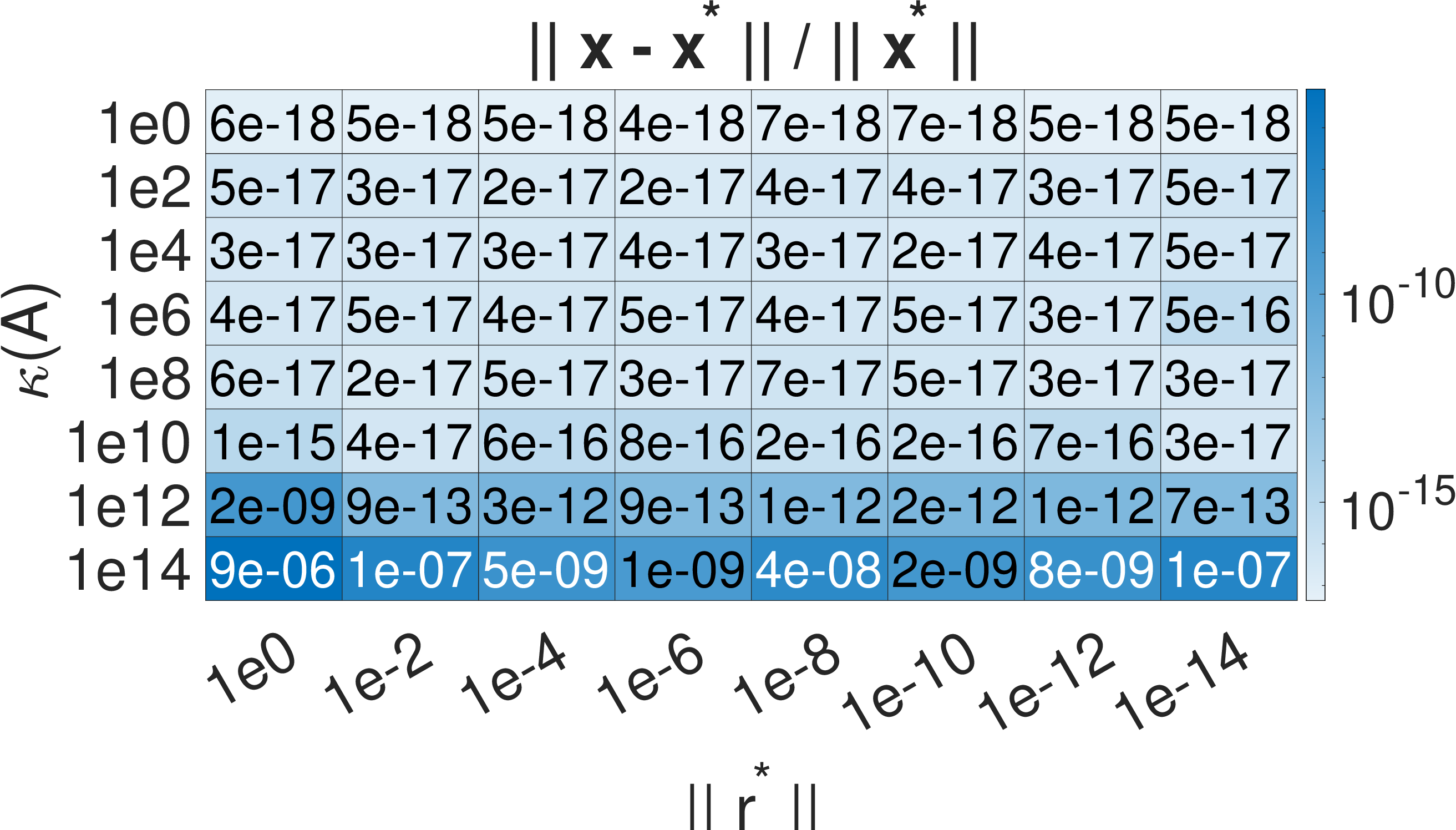}
  \caption{augmented approach, (quad, double)}
\end{subfigure}
\begin{subfigure}[t]{0.45\linewidth}
  \centering
 \includegraphics[width=\linewidth]{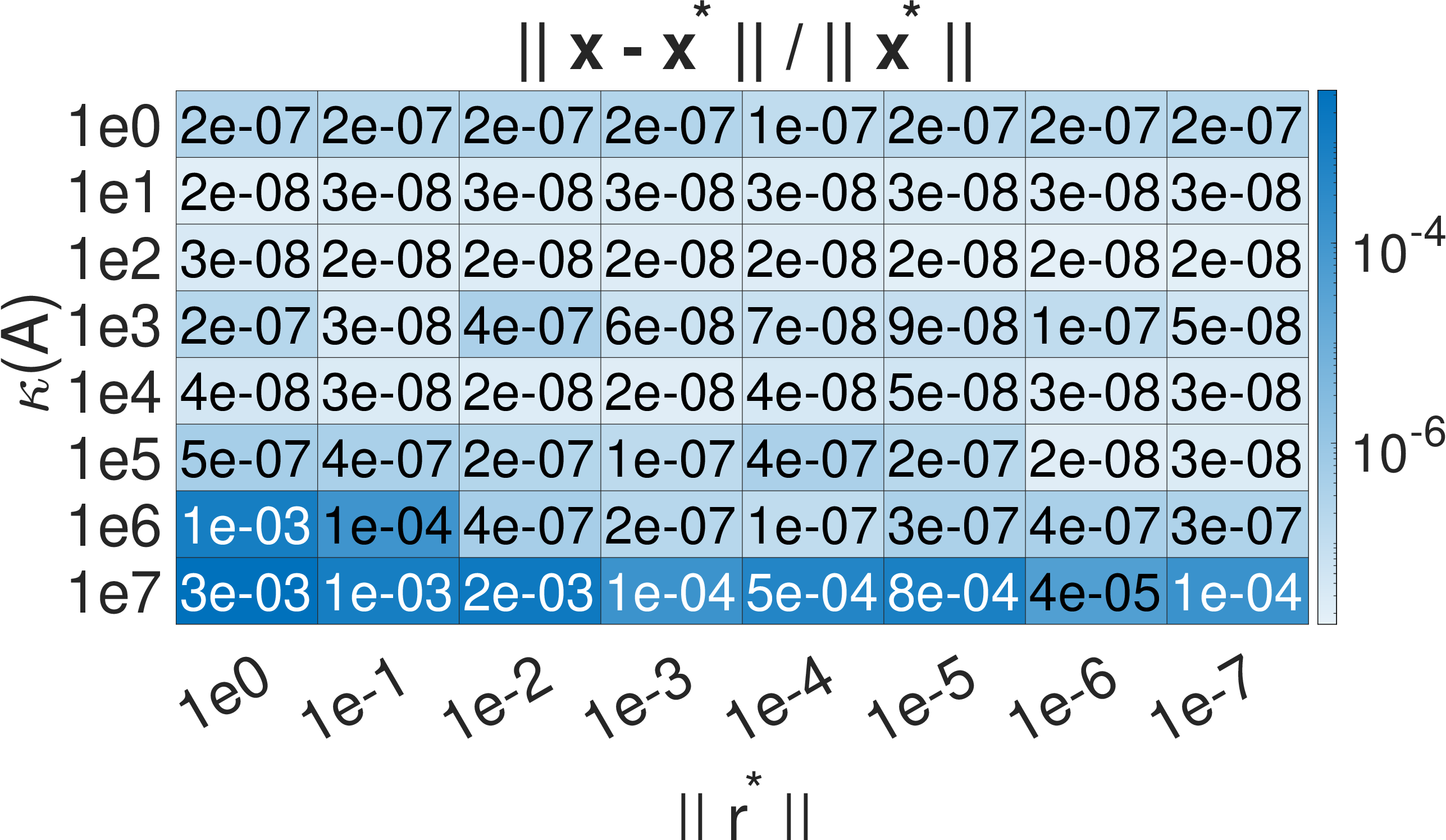}
  \caption{augmented approach, (double, single)}
\end{subfigure}
\begin{subfigure}[t]{0.45\linewidth}
  \centering
 \includegraphics[width=\linewidth]{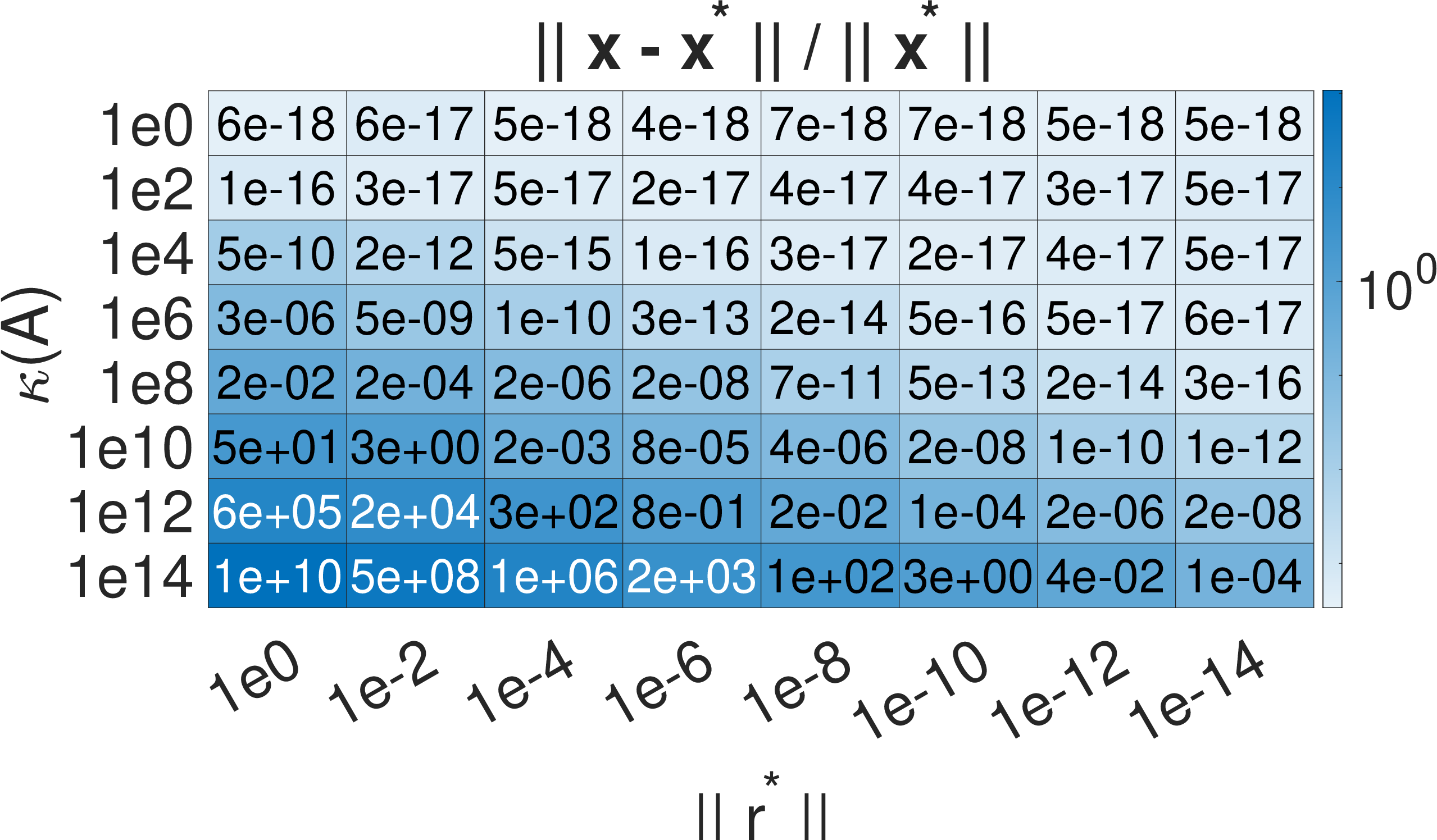}
  \caption{combined approach, (quad, double)}
\end{subfigure}
\begin{subfigure}[t]{0.45\linewidth}
  \centering
 \includegraphics[width=\linewidth]{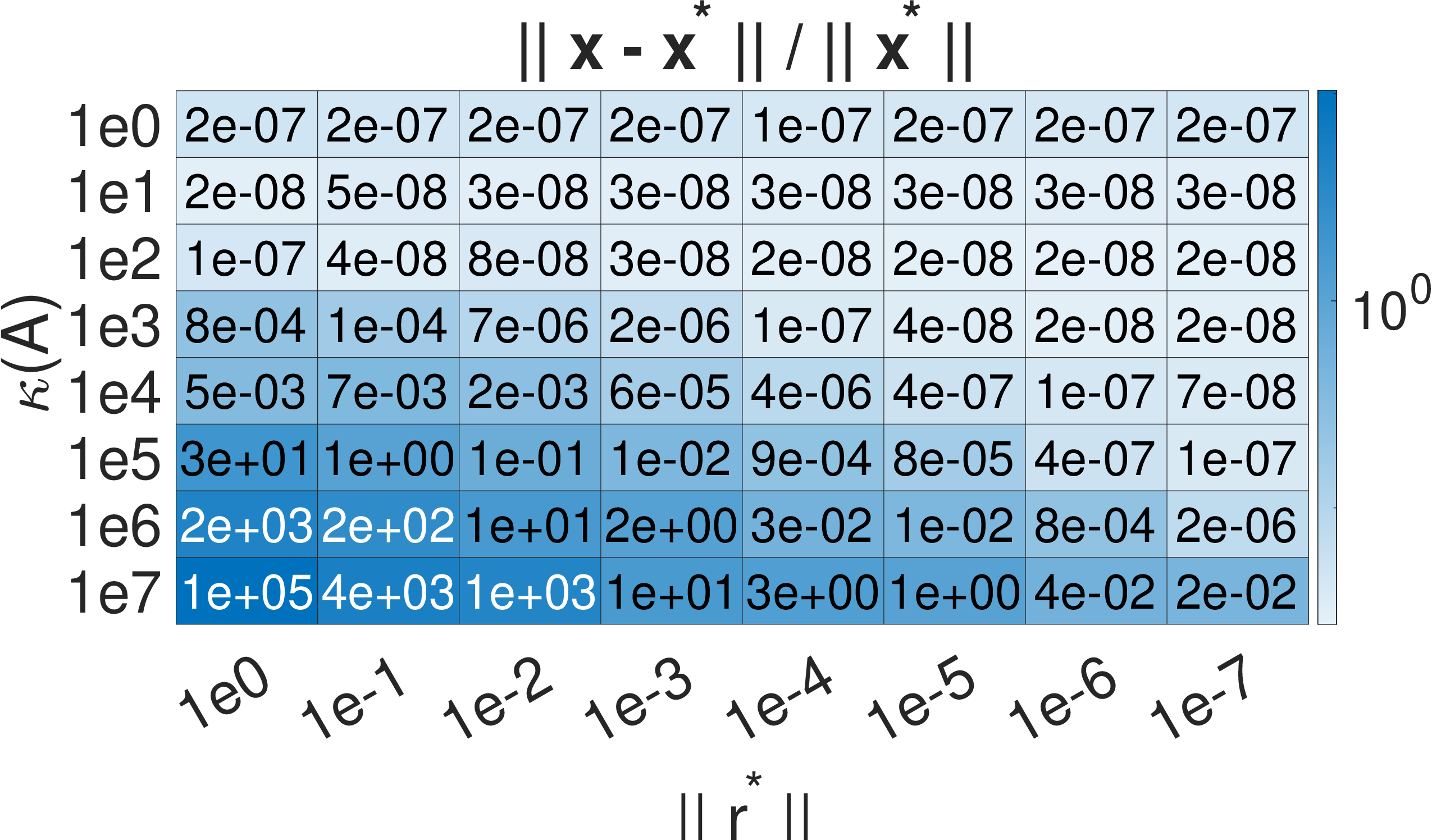}
  \caption{combined approach, (double, single)}
\end{subfigure}
    \caption{As in Figure~\ref{fig:ir_iterations_iterative}, but for the relative error in $x$.}
    \label{fig:x_error_iterative}
\end{figure}

\begin{figure}
    \centering
\begin{subfigure}[t]{0.45\linewidth}
  \centering
 \includegraphics[width=\linewidth]{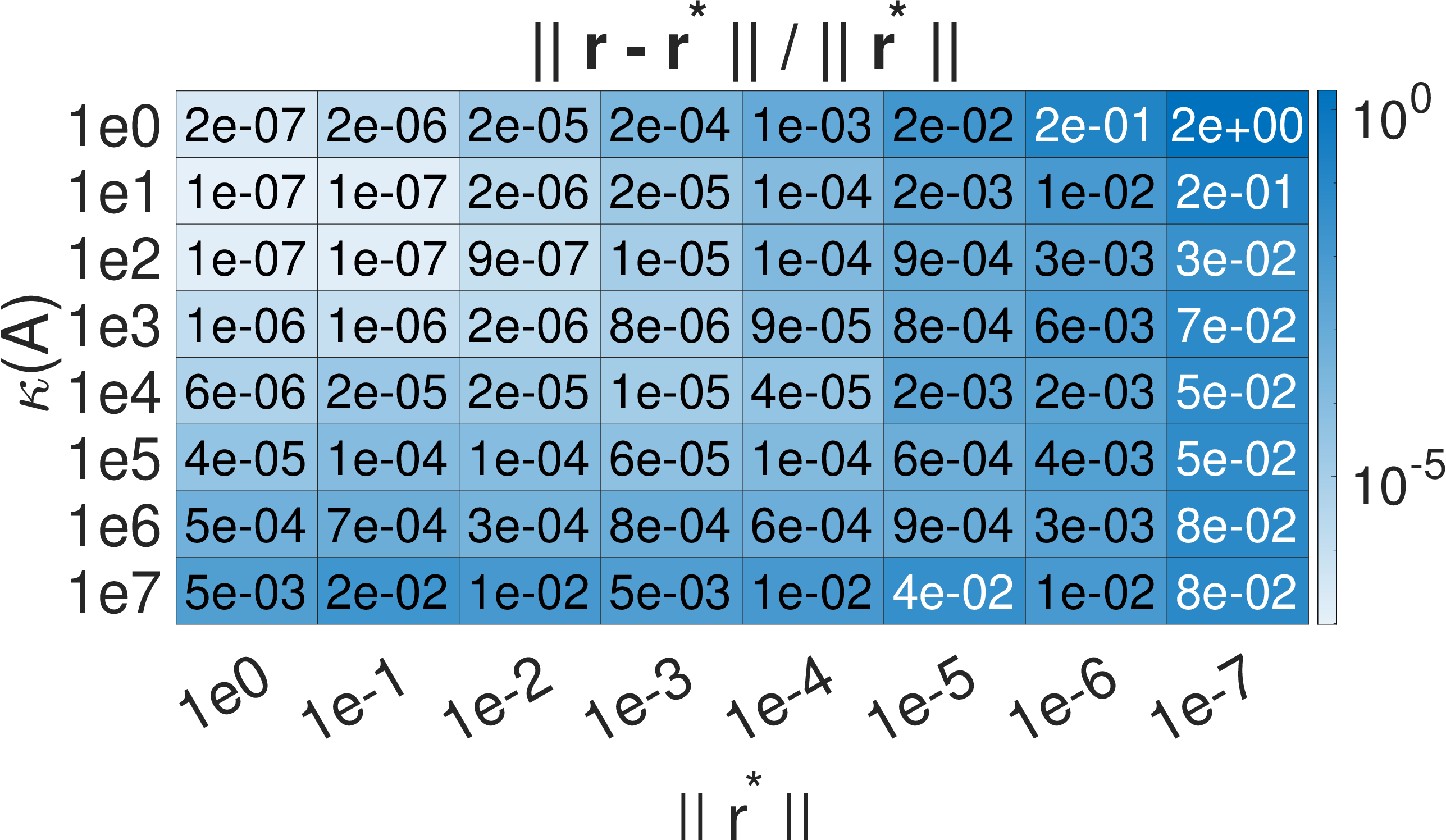}
  \caption{LS approach, (quad, double)}
\end{subfigure}
\begin{subfigure}[t]{0.45\linewidth}
  \centering
 \includegraphics[width=\linewidth]{plots_rand_precond/ls_us_r_error_solviter.eps}
  \caption{LS approach, (double, single)}
\end{subfigure}
\begin{subfigure}[t]{0.45\linewidth}
  \centering
 \includegraphics[width=\linewidth]{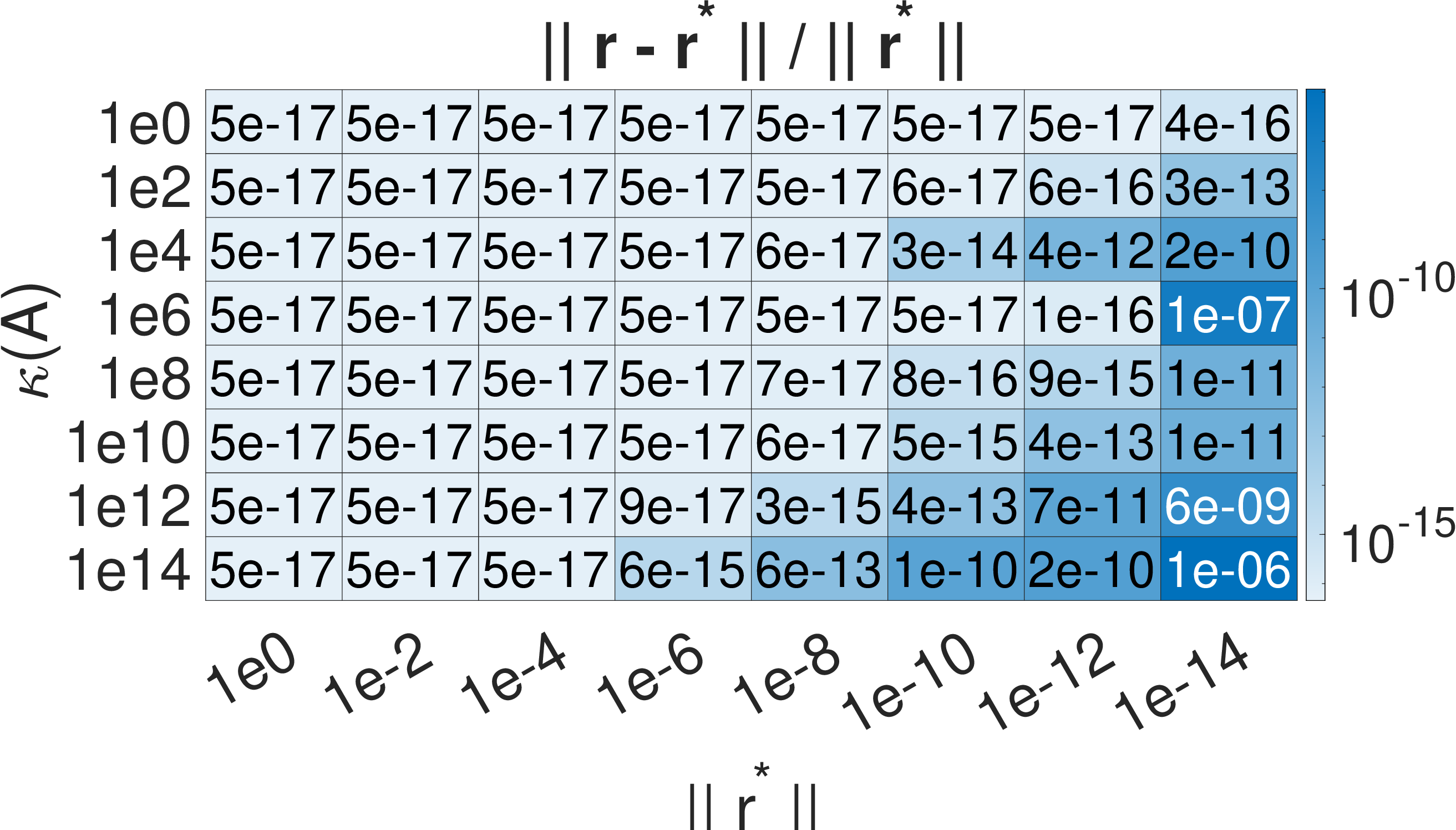}
  \caption{augmented approach, (quad, double)}
\end{subfigure}
\begin{subfigure}[t]{0.45\linewidth}
  \centering
 \includegraphics[width=\linewidth]{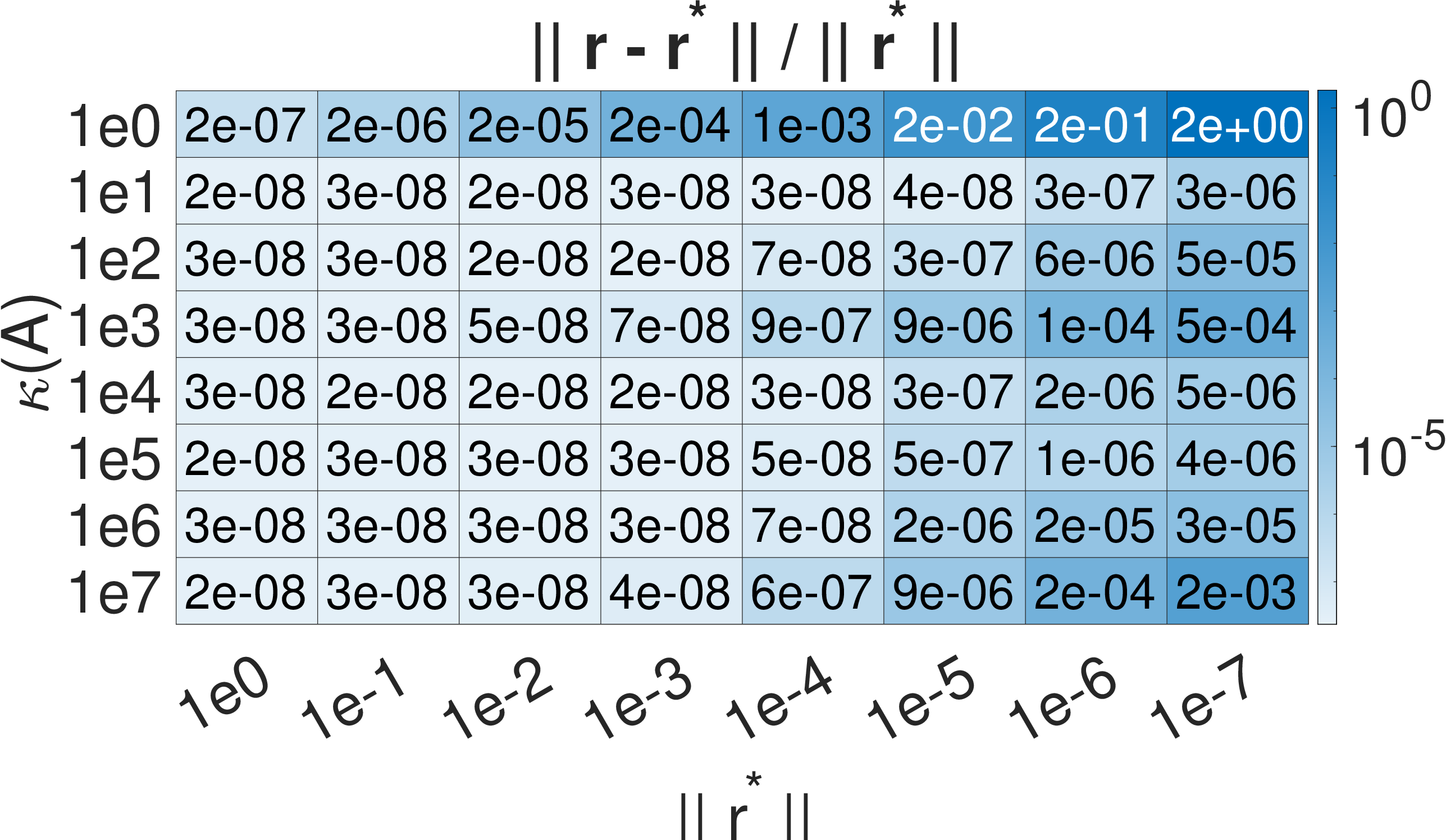}
  \caption{augmented approach, (double, single)}
\end{subfigure}
\begin{subfigure}[t]{0.45\linewidth}
  \centering
 \includegraphics[width=\linewidth]{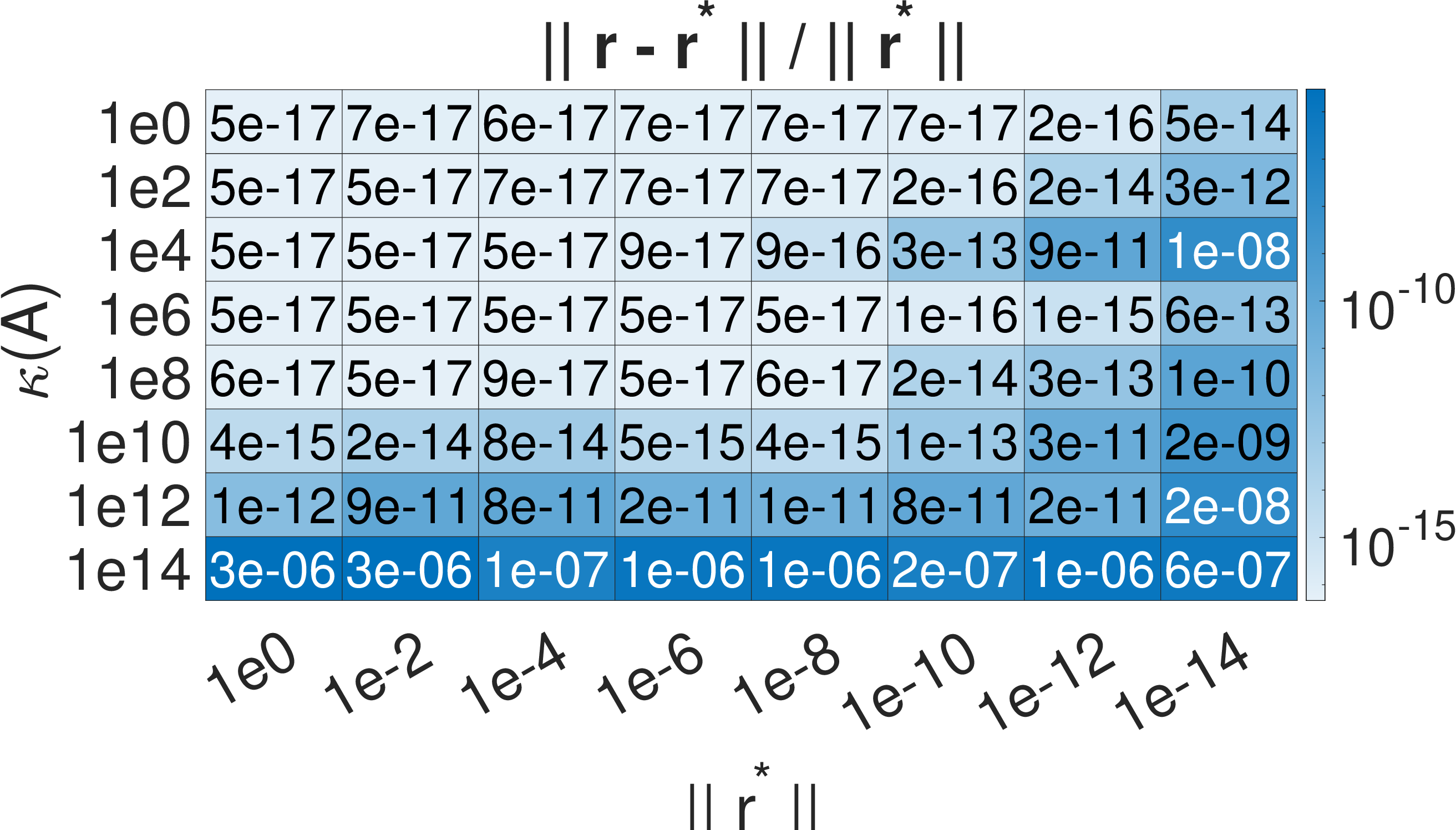}
  \caption{combined approach, (quad, double)}
\end{subfigure}
\begin{subfigure}[t]{0.45\linewidth}
  \centering
 \includegraphics[width=\linewidth]{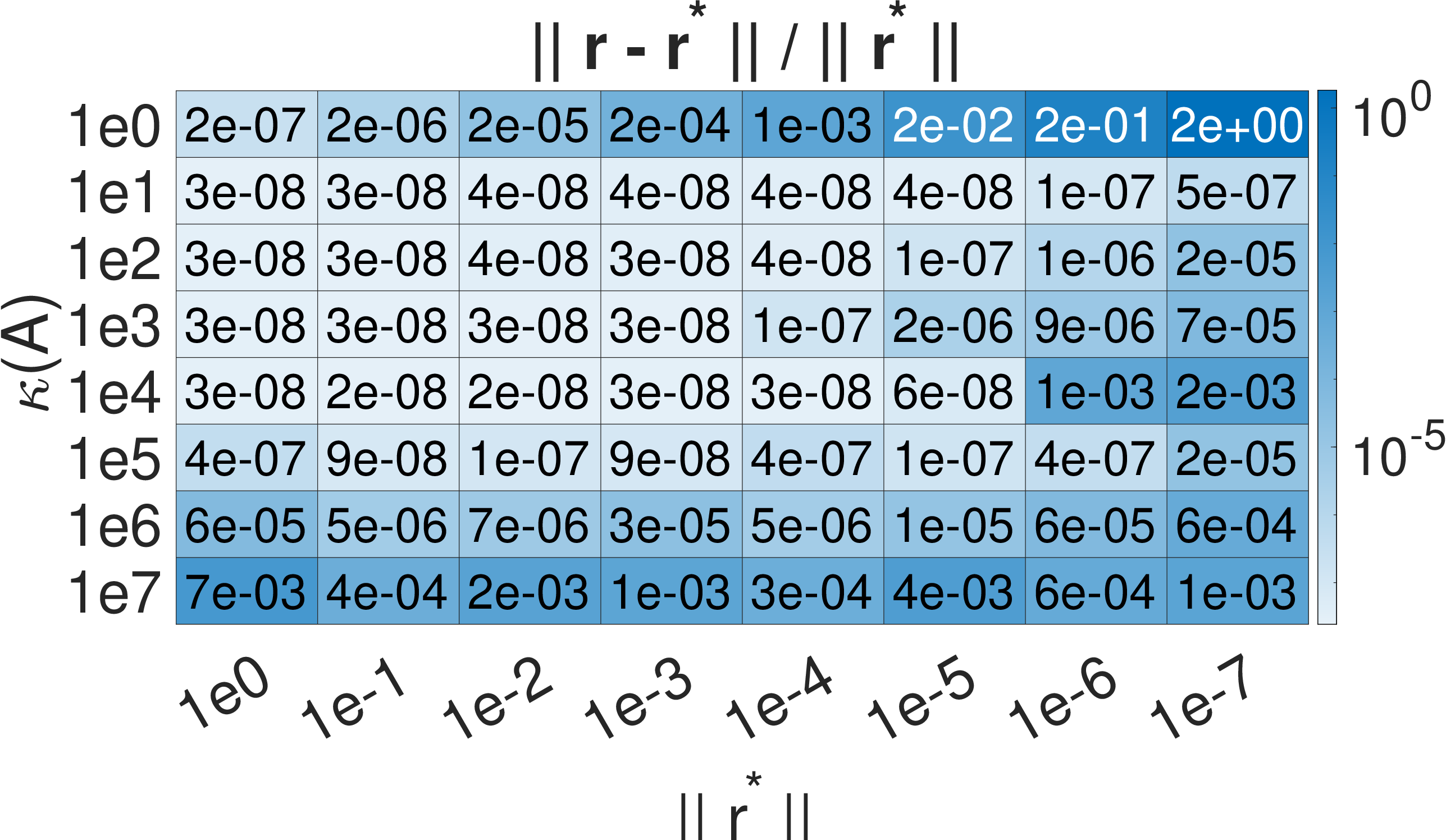}
  \caption{combined approach, (double, single)}
\end{subfigure}
    \caption{As in Figure~\ref{fig:ir_iterations_iterative}, but for the relative error in $r$.}
    \label{fig:r_error_iterative}
\end{figure}

\begin{figure}
    \centering
\begin{subfigure}[t]{0.45\linewidth}
  \centering
 \includegraphics[width=\linewidth]{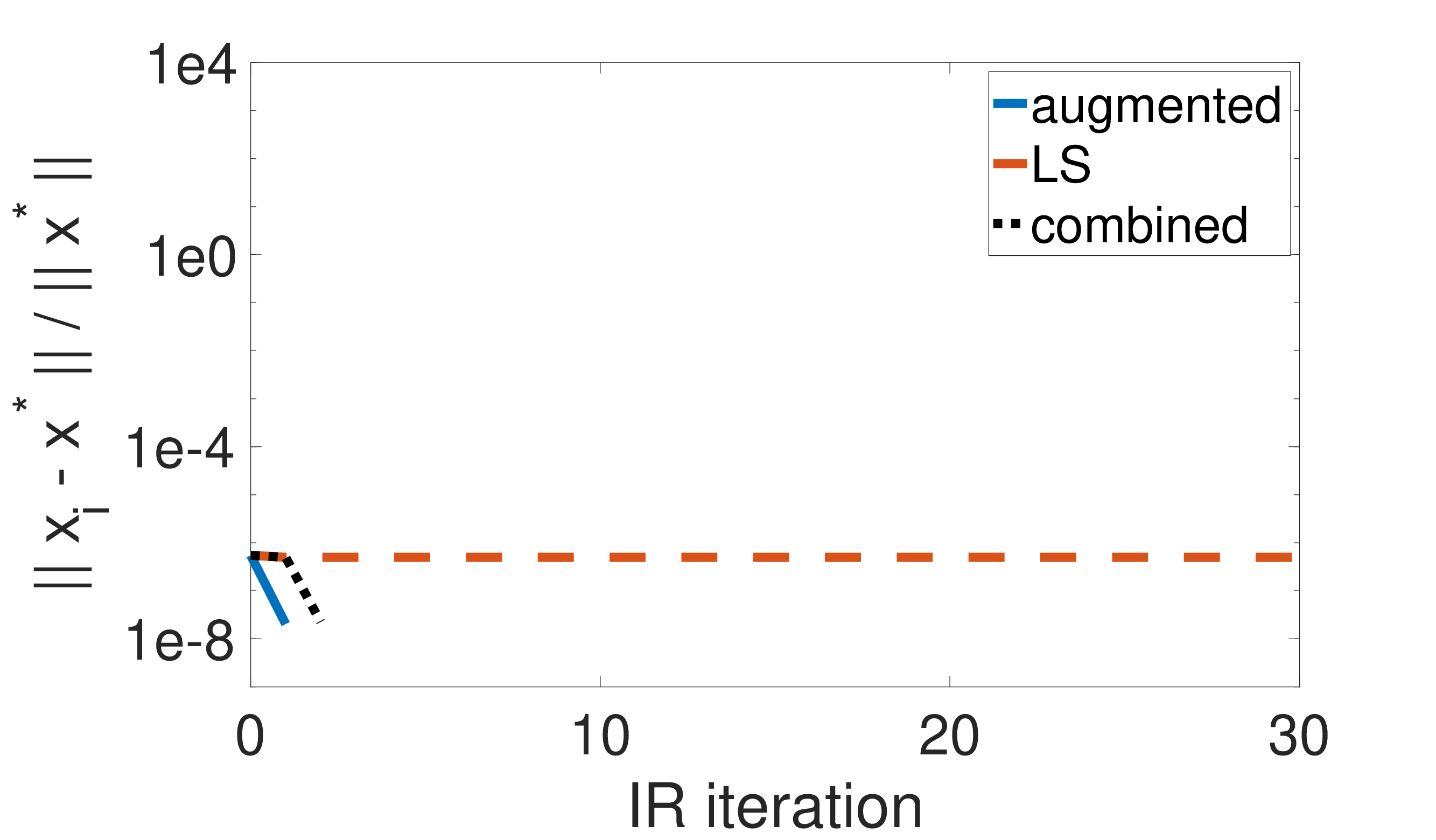}
  \caption{$\kappa_2(A)=$1e1, $\Vert r^* \Vert_2=$1e0 }
\end{subfigure}
\begin{subfigure}[t]{0.45\linewidth}
  \centering
 \includegraphics[width=\linewidth]{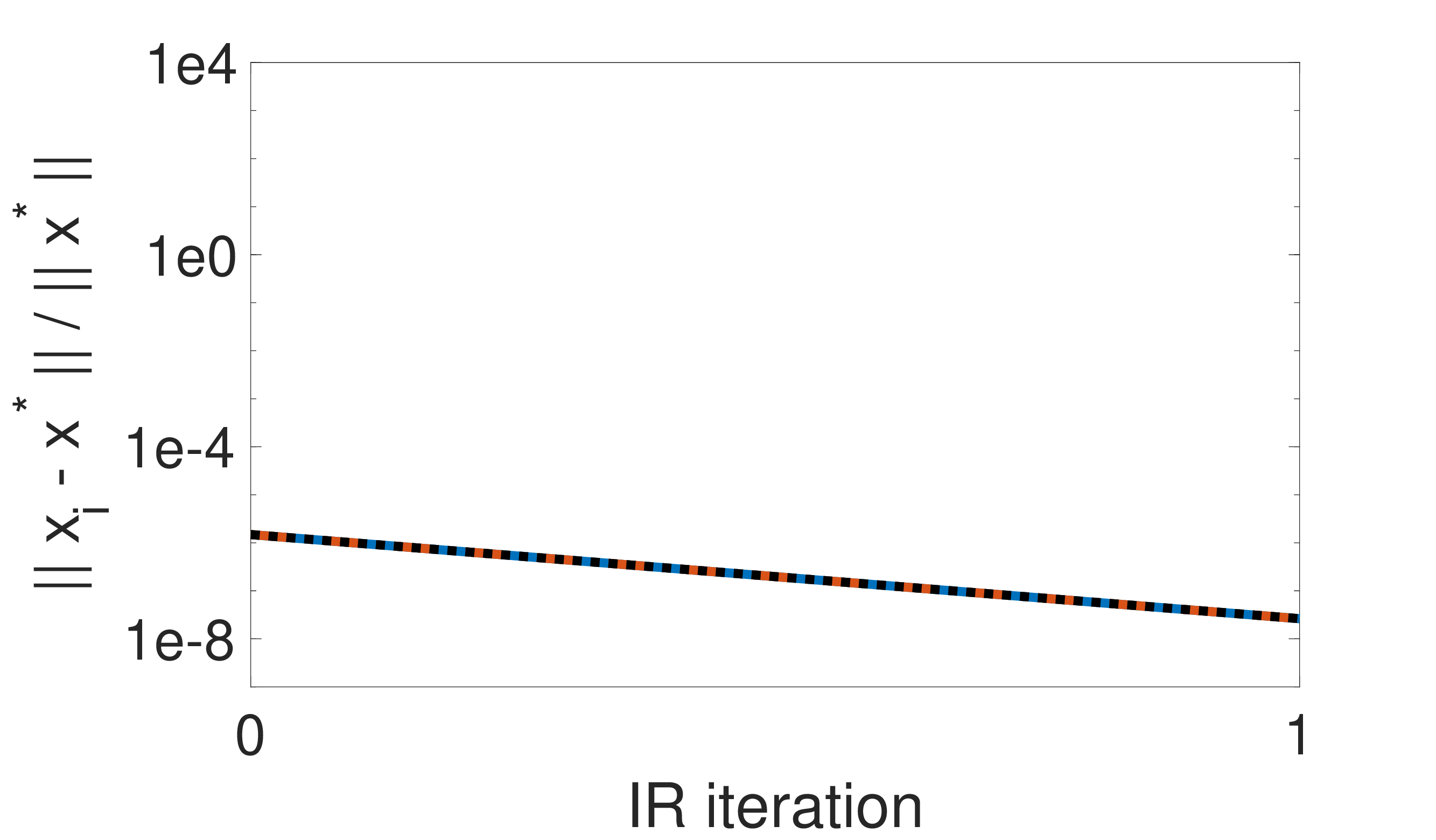}
  \caption{$\kappa_2(A)=$1e1, $\Vert r^* \Vert_2=$1e-6}
\end{subfigure}
\begin{subfigure}[t]{0.45\linewidth}
  \centering
 \includegraphics[width=\linewidth]{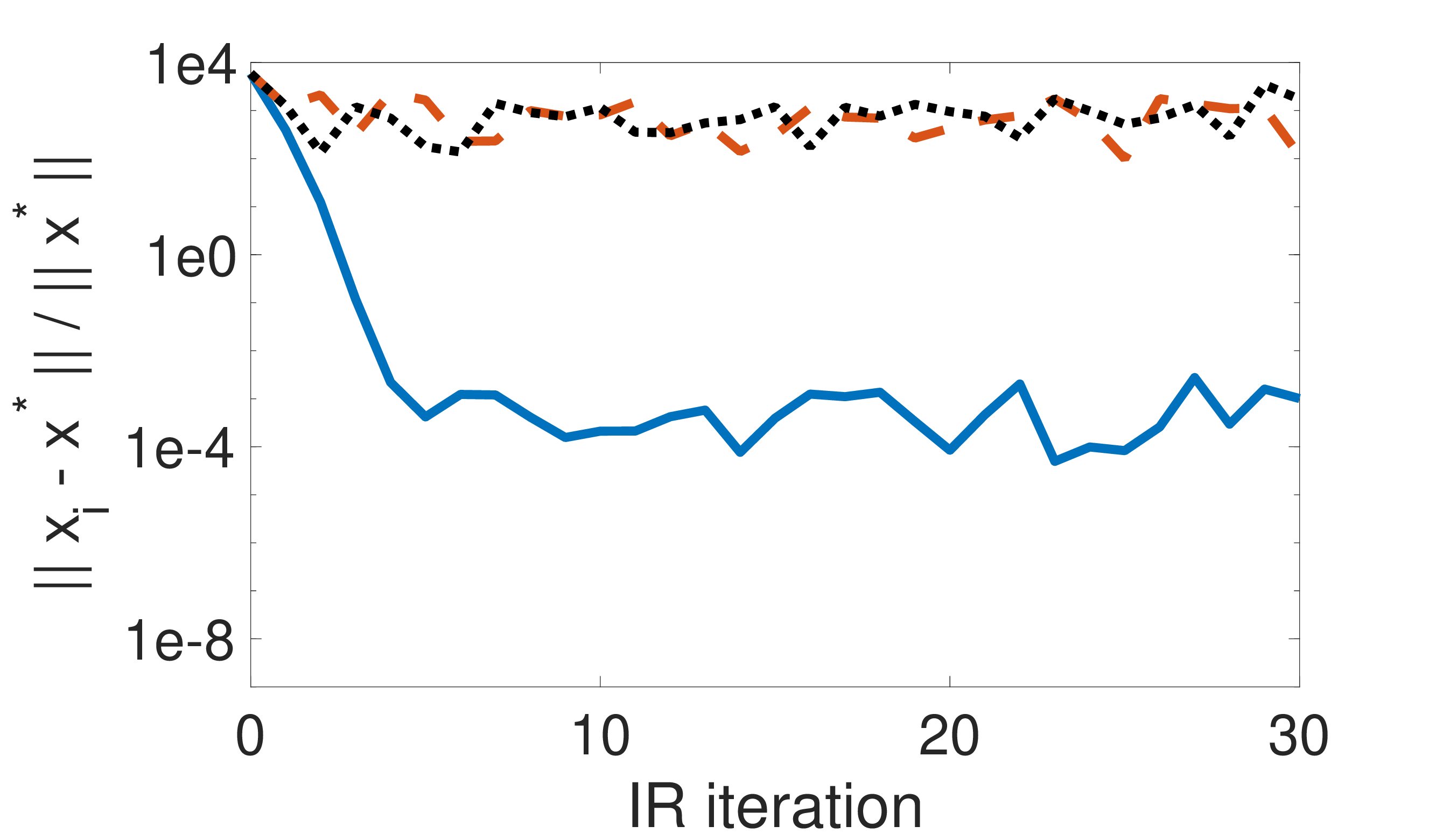}
  \caption{$\kappa_2(A)=$1e6, $\Vert r^* \Vert_2=$1e0 }
\end{subfigure}
\begin{subfigure}[t]{0.45\linewidth}
  \centering
 \includegraphics[width=\linewidth]{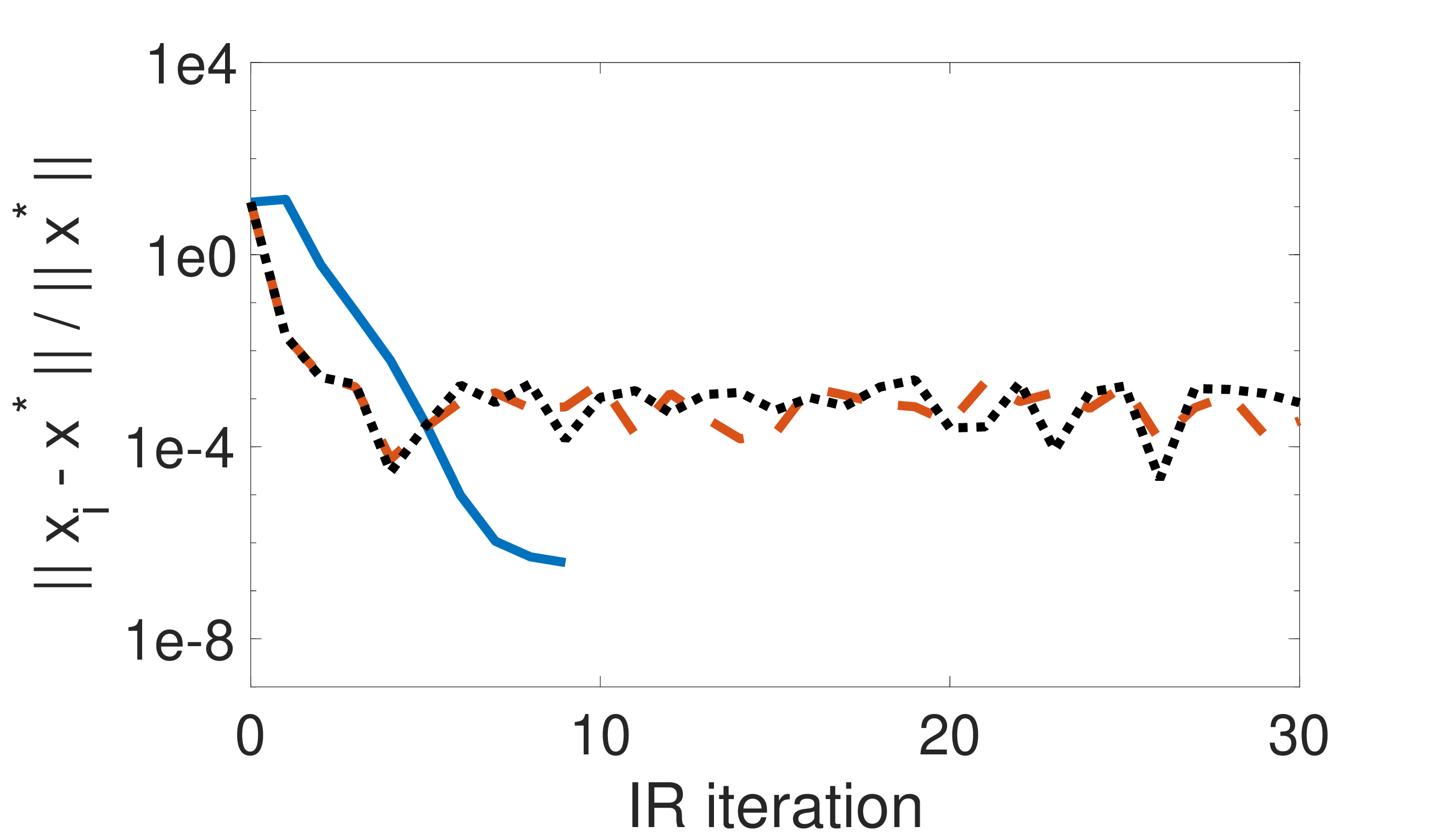}
  \caption{$\kappa_2(A)=$1e6, $\Vert r^* \Vert_2=$1e-6}
\end{subfigure}
    \caption{Relative error in $x$ at every IR iteration using iterative solvers for combinations of large and small $\kappa_2(A)$ and $\Vert r^* \Vert_2$; $u_r$ is set to double and $u$ is set to single.}
    \label{fig:x_conv_iterative}
\end{figure}

\begin{figure}
    \centering
\begin{subfigure}[t]{0.45\linewidth}
  \centering
 \includegraphics[width=\linewidth]{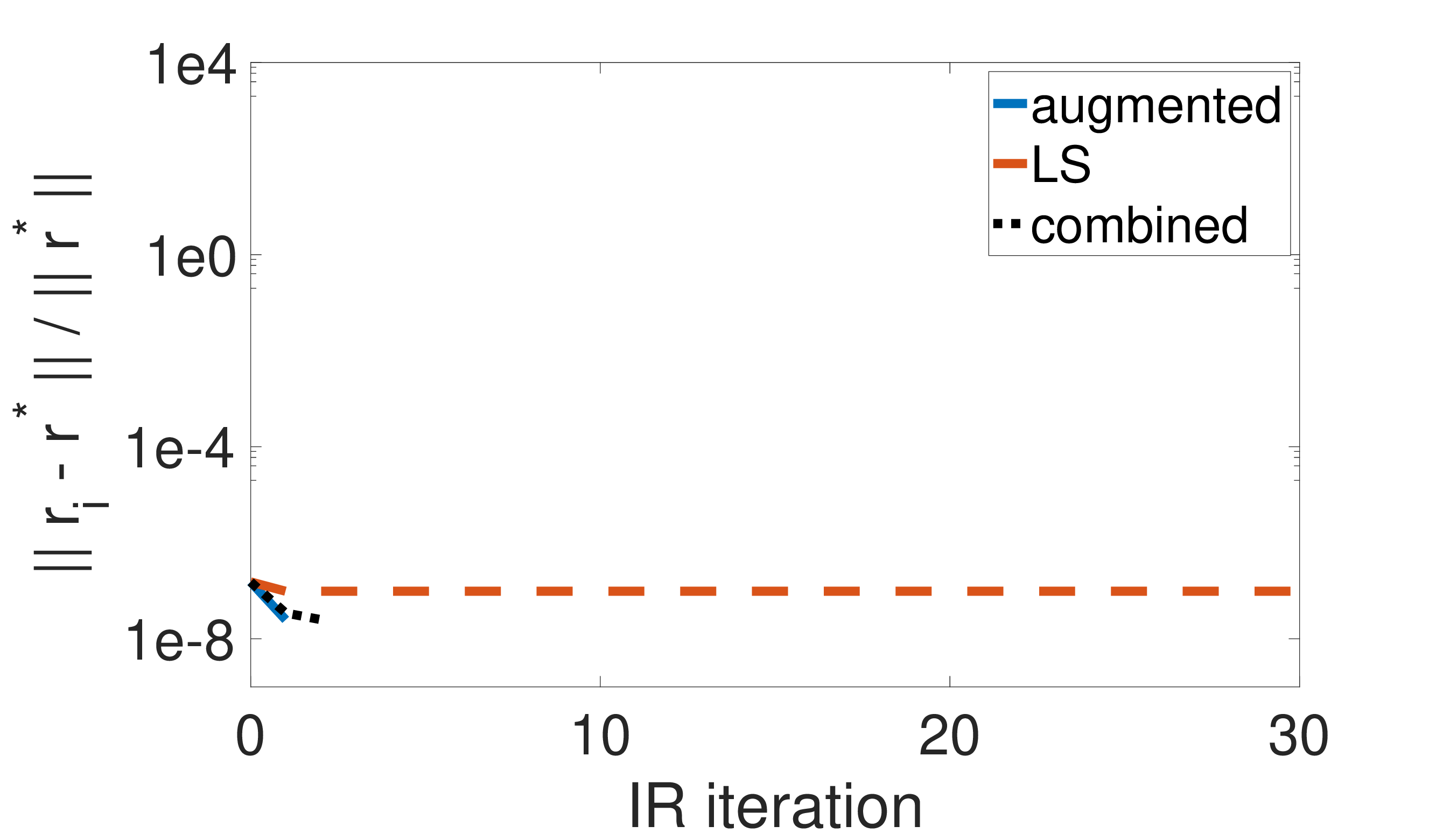}
  \caption{$\kappa_2(A)=$1e1, $\Vert r^* \Vert_2=$1e0 }
\end{subfigure}
\begin{subfigure}[t]{0.45\linewidth}
  \centering
 \includegraphics[width=\linewidth]{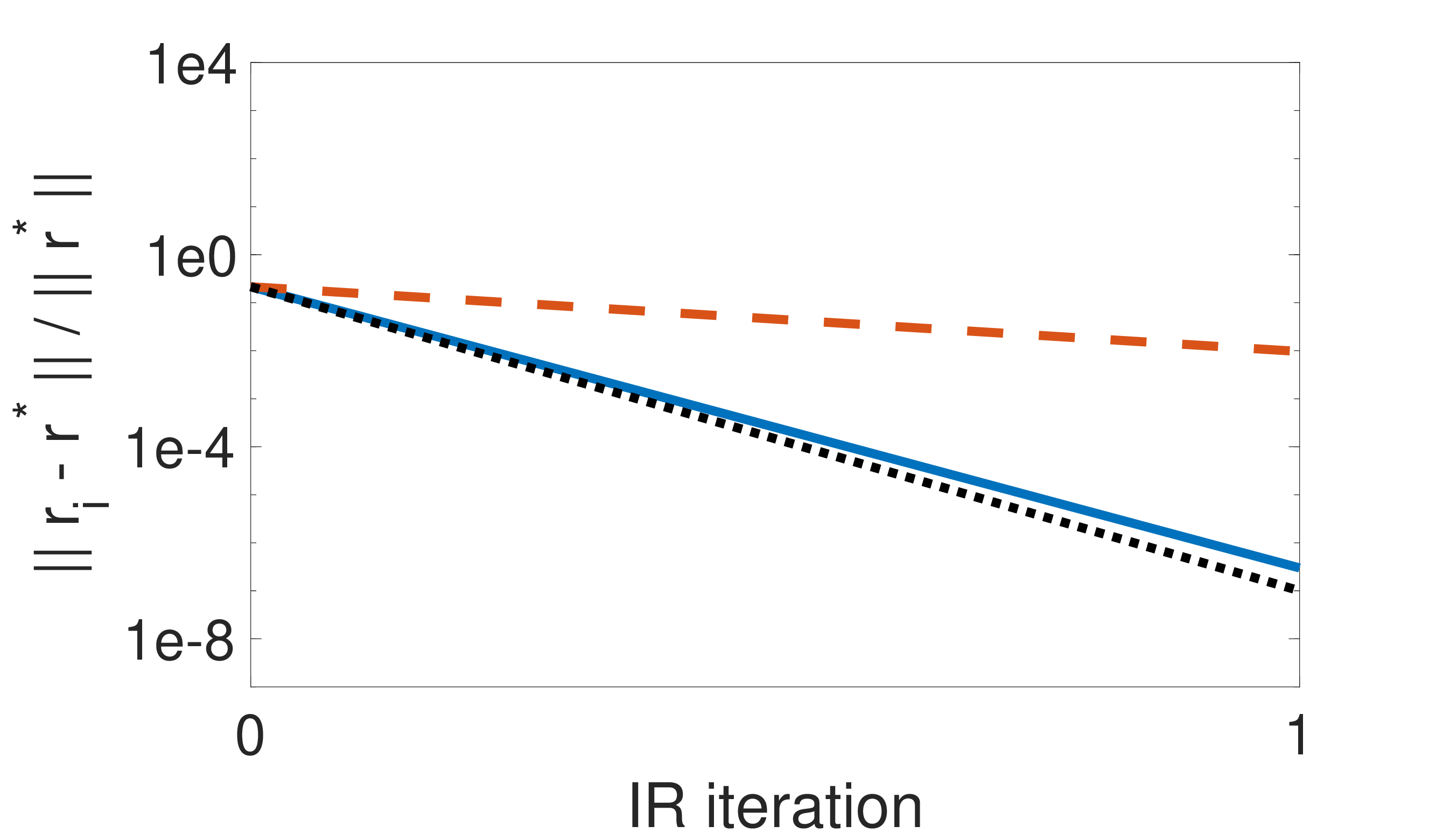}
  \caption{$\kappa_2(A)=$1e1, $\Vert r^* \Vert_2=$1e-6}
\end{subfigure}
\begin{subfigure}[t]{0.45\linewidth}
  \centering
 \includegraphics[width=\linewidth]{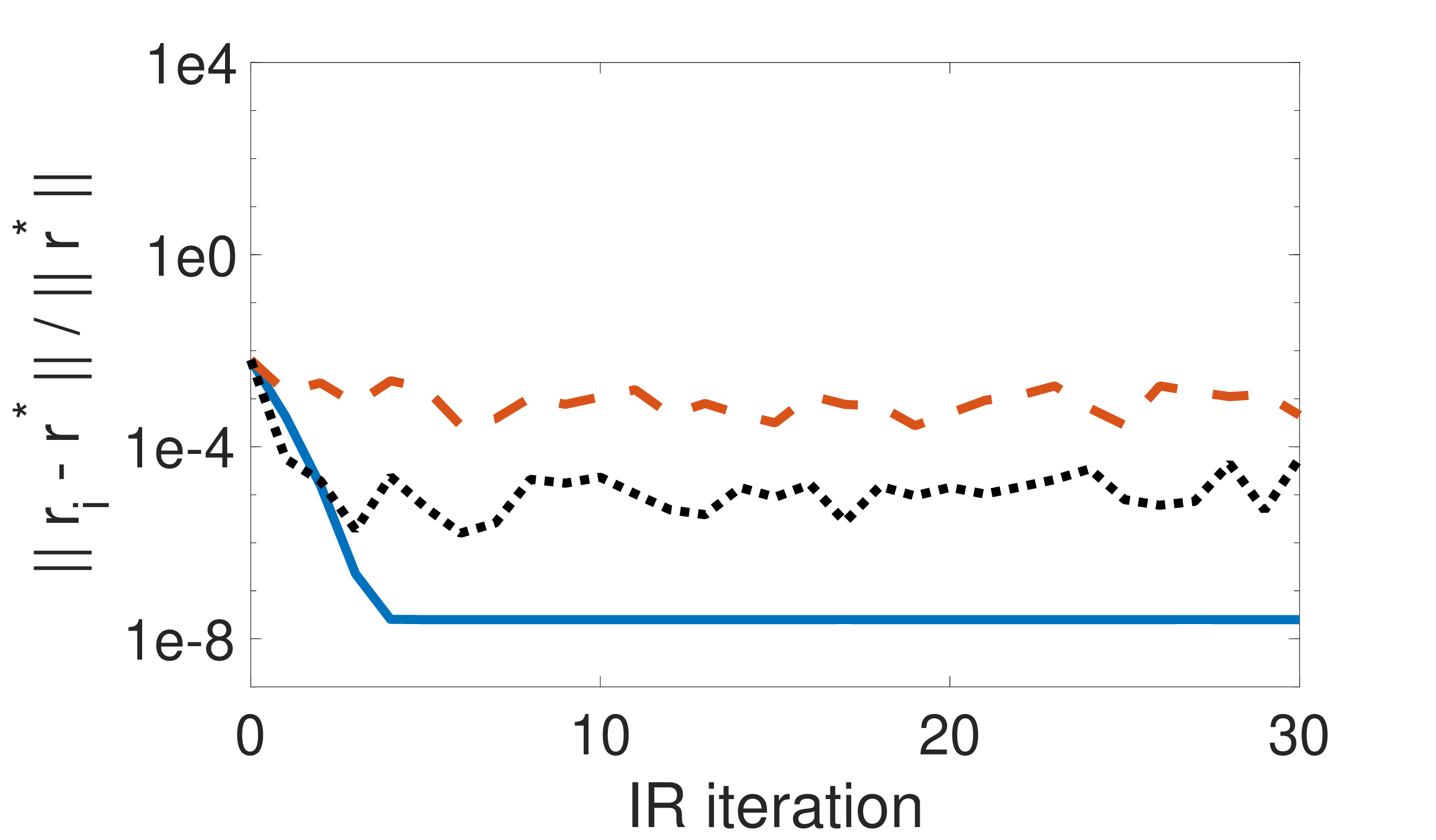}
  \caption{$\kappa_2(A)=$1e6, $\Vert r^* \Vert_2=$1e0 }
\end{subfigure}
\begin{subfigure}[t]{0.45\linewidth}
  \centering
 \includegraphics[width=\linewidth]{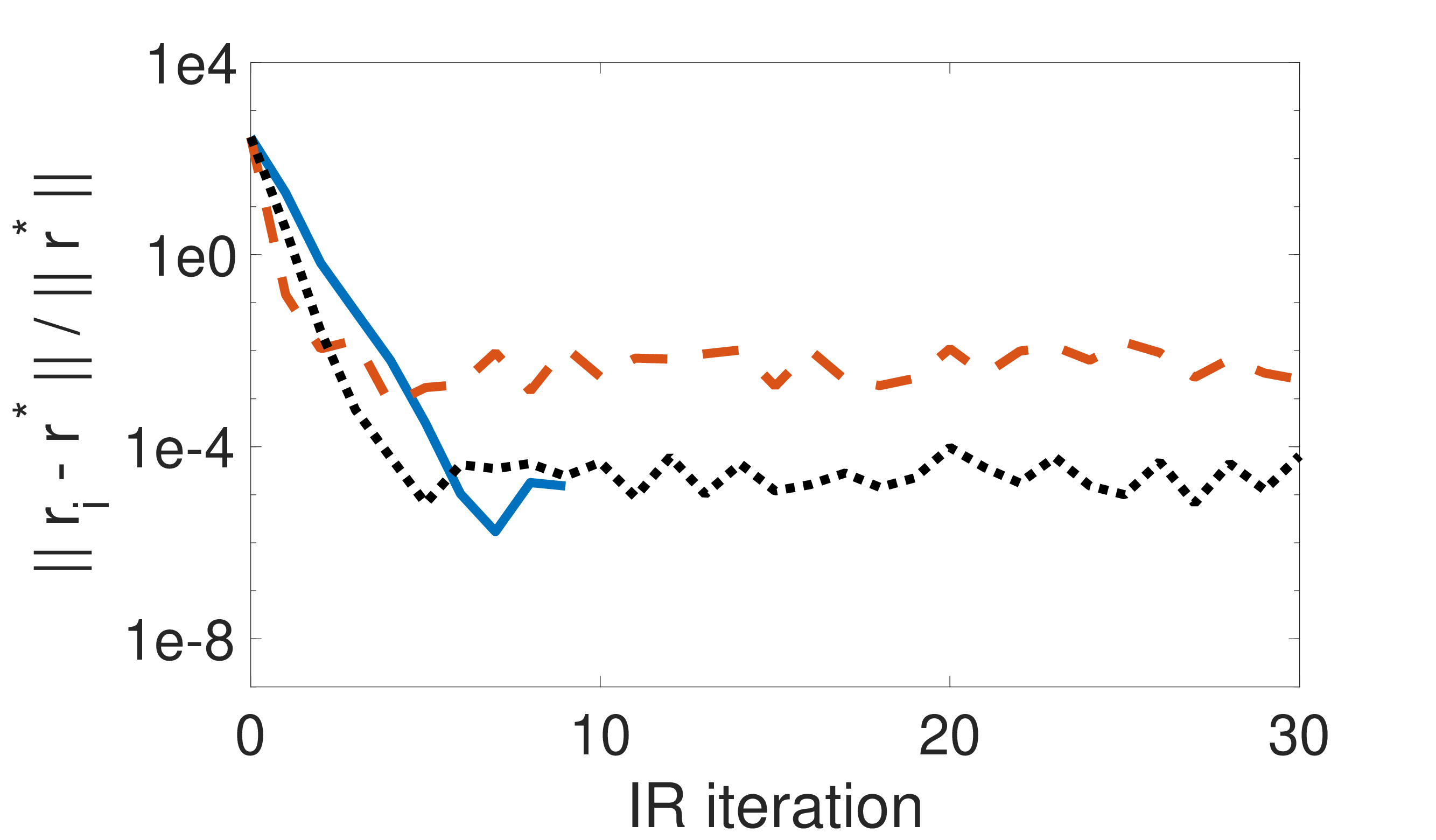}
  \caption{$\kappa_2(A)=$1e6, $\Vert r^* \Vert_2=$1e-6}
\end{subfigure}
    \caption{As in Figure~\ref{fig:x_conv_iterative}, but for the relative error in $r$.}
    \label{fig:r_conv_iterative}
\end{figure}

\section{Summary}\label{sec:summary}

We discussed the least-squares system, semi-normal equations, and augmented system approaches to two-precision iterative refinement for least-squares problems. The focus was on refining the solution $x$, although the augmented system approach also explicitly refines the least-squares residual $r$.  The methods require a similar amount of computation in the high precision $u_r$; see Table~\ref{tab:computational_cost}. Depending on the relative cost of these computations a practitioner can gauge the cost of obtaining a more accurate solution than given by, e.g., a backward-stable solver or fixed precision LSIR.

In addition to reformulating existing results to enable a comparison of the different approaches, we provided a new analysis for the forward error in the two-precision semi-normal equations approach. 
 We further showed how to combine the LS approach, which can use iterative least-squares solvers, and the augmented system approach, which also refines the residual.
\begin{table}[]
    \centering
    \begin{tabular}{c|ccc| c}
    \multirow{2}{*}{LSIR approach}   &  \multicolumn{3}{c|}{$u_r$} & $u$ \\
                    &  $Az$ & $A^T z$& flops & flops \\
      \hline
        LS system & yes & no & $2mn$ & $4mn + n^2$\\
        Semi-normal equations & yes & yes & $4mn - n$ &  $2n^2+n$\\
        Augmented system & yes & yes & $4mn +m -n $ & $8mn + 2n^2$
    \end{tabular}
    \caption{Computational cost per LSIR iteration for the three LSIR approaches in two precisions $u_r$ and $u$ with $u_r \ll u$ when $A$ is dense and Householder QR is used to solve the update equations. We specify the number of flops and whether a matrix-vector product with $A$ and $A^T$ has to be computed in the higher precision.}
    \label{tab:computational_cost}
\end{table}
The main observations are summarized in the following.
\begin{itemize}
    \item We do not have theoretical convergence guarantees for the LS approach and know that for some right-hand side vectors the approach may not be able to refine the solution to the level of the working precision, namely, when the least-squares residual is large. Numerical experiments show that the method does not converge when $\kappa(A) > u^{-1/2}$.
    \item The semi-normal equations approach converges for problems with $\kappa(A) \leq u^{-1/2}$ and is not sensitive to the size of the least-squares residual.
    \item Conditions for LSIR convergence in the solution and the residual when using the augmented system approach are available. Convergence for problems with high condition number (even when $\kappa(A) > u^{-1/2}$) and large residual can be obtained with a direct solver; however, a suitable preconditioner is crucial if iterative solvers are to be used. In either case, the situation can be further improved by the use of an optimal scaling.  
\end{itemize}
Thus the practitioner may choose to use the LS system approach if the problem is well-conditioned and the least-squares residual is small. If it is known that the problem is well-conditioned but the residual is large or unknown, then the semi-normal equations approach may be used. If the user has no knowledge of the conditioning and the size of the residual or it is known that the problem is ill-conditioned, then the augmented system approach is the most promising. The combined approach may be interesting in the cases where both $x$ and $r$ are to be refined and parallel iterative least-squares solvers are preferred. We summarize this advice in a decision tree in Figure~\ref{fig:tree}.

\begin{figure}[h]
\centering
\begin{tikzpicture}[scale=0.8,font=\small, edge from parent fork down,
  edge from parent/.style={->,black,thick,draw},
  level 1/.style={sibling distance=8cm, level distance=60pt},
  level 2/.style={sibling distance=6cm},
  level 3/.style={sibling distance=4cm}]
  \node (root) [treenode] {Refine both $x$ and $r$?} 
    child{ node [treenode] {Is $A$ ill-conditioned?} 
    child {node [treenode] {Is residual large?} 
      child {node [root] {LS} edge from parent node[above] {No}}  
      child {node [treenode] {Can compute accurate $R$-factor?} 
        child {node  [root]  {Semi-normal} edge from parent node[right] {Yes} } edge from parent node[above] {Yes/don't know} } edge from parent node[above] {No} } 
    child {node [root]  {Augmented} edge from parent node[align=left,above] {Yes/ \\ don't know}}
     edge from parent node[above] {Just $x$}}
    child{ node [treenode] {Can only run iterative \\ least-squares solver?}  
    child{ node [root] {Combined}  edge from parent node[right] {Yes} }
    edge from parent node[above] {Yes} }
     ;
     \draw[->,black,thick] (root-2) -|  node [text width=2.5cm,at start,above] {No} (root-1-2);
 \draw[->,black,thick] (root-1-1-2) -|  node [text width=2.5cm,midway,right] {No} (root-1-2);
\end{tikzpicture}
\caption{Decision tree for choosing the IR approach to refine the solution $x$ and possibly the residual $r$ based on available information about the problem.} \label{fig:tree}
\end{figure}

\bibliographystyle{siam}
\bibliography{biblio}

\begin{thebibliography}{10}

\bibitem{advanpix}
{\em Advanpix multiprecision computing toolbox for {M}{A}{T}{L}{A}{B}}.
\newblock \url{http://www.advanpix.com}.

\bibitem{amestoy2024five}
{\sc P.~Amestoy, A.~Buttari, N.~J. Higham, J.-Y. L’Excellent, T.~Mary, and
  B.~Vieubl{\'e}}, {\em Five-precision {GMRES}-based iterative refinement},
  SIAM Journal on Matrix Analysis and Applications, 45 (2024), pp.~529--552.

\bibitem{arioli1989augmented}
{\sc M.~Arioli, I.~S. Duff, and P.~P. de~Rijk}, {\em On the augmented system
  approach to sparse least-squares problems}, Numerische Mathematik, 55 (1989),
  pp.~667--684.

\bibitem{avron2010blendenpik}
{\sc H.~Avron, P.~Maymounkov, and S.~Toledo}, {\em Blendenpik: Supercharging
  {LAPACK}'s least-squares solver}, SIAM Journal on Scientific Computing, 32
  (2010), pp.~1217--1236.

\bibitem{bjorck1967iterative}
{\sc {\AA}.~Bj{\"o}rck}, {\em Iterative refinement of linear least squares
  solutions {I}}, BIT Numerical Mathematics, 7 (1967), pp.~257--278.

\bibitem{bjorck1967solving}
\leavevmode\vrule height 2pt depth -1.6pt width 23pt, {\em {S}olving linear
  least squares problems by {G}ram-{S}chmidt orthogonalization}, BIT Numerical
  Mathematics, 7 (1967), pp.~1--21.

\bibitem{bjorck1978comment}
\leavevmode\vrule height 2pt depth -1.6pt width 23pt, {\em Comment on the
  iterative refinement of least-squares solutions}, Journal of the American
  Statistical Association, 73 (1978), pp.~161--166.

\bibitem{bjorck1987stability}
\leavevmode\vrule height 2pt depth -1.6pt width 23pt, {\em Stability analysis
  of the method of seminormal equations for linear least squares problems},
  Linear Algebra and its Applications, 88 (1987), pp.~31--48.

\bibitem{bjorck1996numerical}
\leavevmode\vrule height 2pt depth -1.6pt width 23pt, {\em Numerical methods
  for least squares problems}, SIAM, 1996.

\bibitem{bjorck1994solution}
{\sc {\AA}.~Bj{\"o}rck and C.~C. Paige}, {\em Solution of augmented linear
  systems using orthogonal factorizations}, BIT Numerical Mathematics, 34
  (1994), pp.~1--24.

\bibitem{businger1965linear}
{\sc P.~Businger and G.~H. Golub}, {\em Linear least squares solutions by
  {H}ouseholder transformations}, Numerische Mathematik, 7 (1965),
  pp.~269--276.

\bibitem{carson2024mixed}
{\sc E.~Carson and I.~Dau\v{z}ickait\.{e}}, {\em {M}ixed precision sketching
  for least-squares problems and its application in {GMRES}-based iterative
  refinement}, arXiv preprint arXiv:2410.06319,  (2024).

\bibitem{carson2018accelerating}
{\sc E.~Carson and N.~J. Higham}, {\em Accelerating the solution of linear
  systems by iterative refinement in three precisions}, SIAM Journal on
  Scientific Computing, 40 (2018), pp.~A817--A847.

\bibitem{carson2020three}
{\sc E.~Carson, N.~J. Higham, and S.~Pranesh}, {\em Three-precision
  {GMRES}-based iterative refinement for least squares problems}, SIAM Journal
  on Scientific Computing, 42 (2020), pp.~A4063--A4083.

\bibitem{chen2023half}
{\sc Y.~Chen, X.~Ji, and J.~Nagy}, {\em Half-precision {K}ronecker product
  {S}{V}{D} preconditioner for structured inverse problems}, arXiv preprint
  arXiv:2311.15393,  (2023).

\bibitem{demmel2009extra}
{\sc J.~Demmel, Y.~Hida, E.~J. Riedy, and X.~S. Li}, {\em Extra-precise
  iterative refinement for overdetermined least squares problems}, ACM
  Transactions on Mathematical Software (TOMS), 35 (2009), pp.~1--32.

\bibitem{epperly2024fast}
{\sc E.~N. Epperly}, {\em Fast and forward stable randomized algorithms for
  linear least-squares problems}, SIAM Journal on Matrix Analysis and
  Applications, 45 (2024), pp.~1782--1804.

\bibitem{golub1965numerical}
{\sc G.~Golub}, {\em Numerical methods for solving linear least squares
  problems}, Numerische Mathematik, 7 (1965), pp.~206--216.

\bibitem{golub1966note}
{\sc G.~H. Golub and J.~H. Wilkinson}, {\em Note on the iterative refinement of
  least squares solution}, Numerische Mathematik, 9 (1966), pp.~139--148.

\bibitem{high:ASNA2}
{\sc N.~J. Higham}, {\em Accuracy and Stability of Numerical Algorithms},
  Society for Industrial and Applied Mathematics, Philadelphia, PA, USA,
  second~ed., 2002.

\bibitem{higham2021exploiting}
{\sc N.~J. Higham and S.~Pranesh}, {\em Exploiting lower precision arithmetic
  in solving symmetric positive definite linear systems and least squares
  problems}, SIAM Journal on Scientific Computing, 43 (2021), pp.~A258--A277.

\bibitem{li2024double}
{\sc H.~Li}, {\em Double precision is not necessary for {LSQR} for solving
  discrete linear ill-posed problems}, Journal of Scientific Computing, 98
  (2024), p.~55.

\bibitem{paige1982lsqr}
{\sc C.~C. Paige and M.~A. Saunders}, {\em {LSQR}: An algorithm for sparse
  linear equations and sparse least squares}, ACM Transactions on Mathematical
  Software (TOMS), 8 (1982), pp.~43--71.

\bibitem{rokhlin2008fast}
{\sc V.~Rokhlin and M.~Tygert}, {\em A fast randomized algorithm for
  overdetermined linear least-squares regression}, Proceedings of the National
  Academy of Sciences, 105 (2008), pp.~13212--13217.

\bibitem{saad2003iterative}
{\sc Y.~Saad}, {\em Iterative methods for sparse linear systems}, SIAM, 2003.

\bibitem{scott2022computational}
{\sc J.~Scott and M.~T\r{u}ma}, {\em A computational study of using black-box
  {QR} solvers for large-scale sparse-dense linear least squares problems}, ACM
  Transactions on Mathematical Software (TOMS), 48 (2022), pp.~1--24.

\bibitem{von1947numerical}
{\sc J.~Von~Neumann and H.~Goldstine}, {\em {N}umerical inverting of matrices
  of high order}, Bulletin of the American Mathematical Society, 53 (1947),
  pp.~1021--1099.

\end{thebibliography}

\end{document}